\documentclass[leqno]{siamltex704}
\usepackage{hyperref}
\usepackage{amsmath}
%\usepackage{graphicx}
%\usepackage[notcite,notref]{showkeys}
%\usepackage{mathrsfs}
%\usepackage{graphics}
%% or use the graphicx package for more complicated commands
%\usepackage{float}
\usepackage{amsfonts,amssymb}
\usepackage{dsfont}
\usepackage{pifont}
\renewcommand{\ldots}{\dotsc}
\newtheorem{algorithm}{Algorithm}
\newtheorem{model-problem}{Problem}

\newcommand{\bu}{\textbf{u}}
\newcommand{\bw}{\textbf{w}}

\newcommand{\bx}{\textbf{x}}

\newcommand{\bz}{\textbf{z}}

\newcommand{\be}{\textbf{e}}
\newcommand{\bv}{\textbf{v}}

\newcommand{\curl}{{\nabla\times}}

\newcommand{\bpsi}{{\bf \psi}}
\newcommand{\diameter}{\operatorname{diam}}

\def\T{{\mathcal T}}
\def\E{{\mathcal E}}

\def\pT{{\partial T}}

\def\jump#1{{[\![#1[\!]}}
\def\bbf{{\bf f}}
\def\bg{{\bf g}}
\def\bn{{\bf n}}

\def\bU{{\mathbf U}}
\def\bQ{{\mathbf Q}}
\def\bV{{\mathbf V}}

\def\bbR{{\mathbb{R}}}

\def\3bar{{|\hspace{-.02in}|\hspace{-.02in}|}}

\def\leqC{\lesssim}
\def\Hmudiv#1{{H({\rm div}_\mu; #1)}}
\def\Hzmudiv#1{{H_0({\rm div}_\mu; #1)}}

\def\bpsi{{\boldsymbol\psi}}
\def\bvarphi{{\boldsymbol\varphi}}
\def\boldeta{{\boldsymbol\eta}}
\def\bomega{{\boldsymbol\omega}}

\def\bzeta{{\boldsymbol\zeta}}
\def\bchi{{\boldsymbol\chi}}
\def\bphi{{\boldsymbol\phi}}
\def\FmuOmega{{\mathds{F}_\mu(\Omega)}}

\begin{document}

\setlength{\parindent}{0.25in} \setlength{\parskip}{0.08in}

\title{Discretization of div-curl Systems by Weak Galerkin Finite Element Methods
on Polyhedral Partitions}

\author{
Chunmei Wang\thanks{School of Mathematics, Georgia Institute of
Technology, Atlanta, GA 30332 (cwang462@math.gatech.edu). Nanjing
Normal University Taizhou College, Taizhou 225300, China.} \and
Junping Wang\thanks{Division of Mathematical Sciences, National
Science Foundation, Arlington, VA 22230 (jwang@nsf.gov). The
research of Wang was supported by the NSF IR/D program, while
working at National Science Foundation. However, any opinion,
finding, and conclusions or recommendations expressed in this
material are those of the author and do not necessarily reflect the
views of the National Science Foundation.}}

\maketitle

\begin{abstract}
In this paper, the authors devise a new discretization scheme for
div-curl systems defined in connected domains with heterogeneous
media by using the weak Galerkin finite element method. Two types of
boundary value problems are considered in the algorithm development:
(1) normal boundary condition, and (2) tangential boundary
condition. A new variational formulation is developed for the normal
boundary value problem by using the Helmholtz decomposition which
avoids the computation of functions in the harmonic fields. Both
boundary value problems are reduced to a general saddle-point
problem involving the curl and divergence operators, for which the
weak Galerkin finite element method is devised and analyzed. The
novelty of the technique lies in the discretization of the
divergence operator applied to vector fields with heterogeneous
media. Error estimates of optimal order are established for the
corresponding finite element approximations in various discrete
Sobolev norms.

\end{abstract}

\begin{keywords} weak Galerkin, finite element methods, Helmholtz
decomposition, weak divergence, weak curl, div-curl systems.
\end{keywords}

\begin{AMS}
Primary 65N30, 65N12, 65N15; Secondary 35Q60, 35B45.
\end{AMS}

\section{Introduction}

\smallskip

This paper is concerned with new developments of numerical methods
for div-curl systems with two types of boundary conditions. The
model problem seeks an unknown function  $\bu=\bu(\bx)$ satisfying
\begin{eqnarray}
\nabla\cdot(\mu\bu)&=&f, \quad\text{in}\ \Omega,\label{div-eqn}\\
\nabla\times\bu &=&\bg,\quad\text{in}\ \Omega,\label{curl-eqn}
\end{eqnarray}
where $\Omega$ is an open bounded and connected domain in
$\mathbb{R}^3$ with a Lipschitz continuous boundary
$\Gamma=\partial\Omega$. The Lebesgue-integrable real-valued
function $f=f(\bx)$ and a vector field $\bg=\bg(\bx)$ are given in
the domain $\Omega$. Here $\mu=\{\mu_{ij}(\bx)\}_{3\times 3}$ is a
symmetric matrix, uniformly positive definite in $\Omega$ and with
entries in $L^\infty(\Omega)$. Assume that the boundary $\Gamma$ has
$m+1$ connected components $\Gamma_i$ such that
$$
\Gamma
=\bigcup_{i=0}^m\Gamma_i,
$$
where $\Gamma_0$ represents the exterior boundary of $\Omega$, and
$\Gamma_i, 1\le i \le m$, the other connected components of
$\Gamma$.

The system (\ref{div-eqn})-(\ref{curl-eqn}) arises in fluid
mechanics and electromagnetic field theories. In  the  fluid
mechanics field theory, the coefficient matrix $\mu(\bx)$ is
diagonal where diagonal entries are the local mass density. In
electrostatics field theory, $\mu(\bx)$ is the permittivity matrix.
In linear magnetic field theory, the function $f(\bx)$ is zero,
$\bu$ represents the magnetic field intensity and $\mu(\bx)$ is the
inverse of the magnetic permeability tensor.

We consider two types of boundary conditions for the div-curl system
(\ref{div-eqn})-(\ref{curl-eqn}): the normal boundary condition, and
the tangential boundary condition.

The normal boundary condition is concerned with a given flux value
for the vector field $\mu\bu$ on $\Gamma$; i.e.,
\begin{equation}\label{flux-bc}
\left(\mu\bu\right)\cdot\bn = \xi,\qquad\mbox{on\ $\Gamma$},
\end{equation}
where  $\textbf{n}$  is the unit outward normal direction on
 $\Gamma$.

The tangential boundary condition corresponds to a given value for
the tangential component of the vector field $\bu$; i.e.,
\begin{equation}\label{tangential-bc}
\begin{split}
\bu\times\bn &= \bchi,\qquad\mbox{ on\ $\Gamma$},\\
\langle \mu\bu\cdot\bn_i, 1\rangle_{\Gamma_i} &=\beta_i,\qquad
i=1,\ldots, m,
\end{split}
\end{equation}
where  $\textbf{n}_i$  is the unit outward normal direction on the
connected component $\Gamma_i$.

The space of harmonic fields is given by
$$
\mathds{H}_{\mu}(\Omega)=\{\bv\in [L^2(\Omega)]^3: \ \curl\bv=0,\
\nabla\cdot(\mu \bv)=0,\ \mu\bv\cdot \bn = 0 \mbox{ on } \Gamma\}.
$$
The space of harmonic fields is non-trivial when the domain is not
simply connected. It is readily seen that the div-curl system
(\ref{div-eqn})-(\ref{curl-eqn}) with the normal boundary condition
(\ref{flux-bc}) is generally not well-possed, as uniqueness fails
(adding a harmonic field to a solution $\bu$ still makes a
solution). For this reason, we shall look for a solution $\bu$ that
is orthogonal to the space of harmonic fields in the $\mu$-weighted
$L^2$ norm. Specifically, the complete problem reads: given $\bg\in
[L^2(\Omega)]^3$ with $\nabla\cdot\bg=0$ in $\Omega$, find $\bu$
such that
\begin{equation}\label{EQ:div-curl:normalBC}
\left\{
\begin{split}
\nabla\cdot(\mu\bu) &= f, \qquad\text{in}\ \Omega,\\
\nabla\times\bu &= \bg,\qquad\text{in}\ \Omega,\\
\mu\bu\cdot\bn & = \xi,\qquad \text{on}\ \Gamma,\\
\int_\Omega \mu\bu\cdot \boldeta & = 0,\qquad \forall\
\boldeta\in\mathds{H}_{\mu}(\Omega).
\end{split}
\right.
\end{equation}
The problem (\ref{EQ:div-curl:normalBC}) has a solution
\cite{saranen}, and uniqueness is straightforward. It is also
well-known that the div-curl system (\ref{div-eqn})-(\ref{curl-eqn})
with the tangential boundary condition (\ref{tangential-bc}) has one
and only one solution.

There have been many numerical methods for approximating div-curl
systems. In \cite{n1992}, Nicolaides proposed and analyzed a control
volume method directly for planar div-curl problems. In
\cite{nw1997}, Nicolaides and Wu presented a special co-finite
volume method for div-curl problems in three dimension, which was
based on a system of two orthogonal grids like the classical
Voronoi-Delaunay mesh pair. Bossavit \cite{b1998} proposed a
classical numerical method for solving the magnetostatic problem by
introducing a scalar or vector potential. In \cite{omnes}, a
discrete duality finite volume method was presented for div-curl
problems on almost arbitrary polygonal meshes. In \cite{bramble},
Bramble and Pasciak proposed and analyzed a direct numerical scheme
under a very weak formulation where the solution space was
$[L^2(\Omega)]^3$. In \cite{copeland}, a mixed finite element method
was analyzed for div-curl systems in simply connected axisymmetric
domains. In \cite{valli2}, a numerical algorithm was designed for
constructing a finite element basis for the first de Rham cohomology
group of the computational domain, which was further used for a
numerical approximation of the magnetostatic problem. In
\cite{brezzi2} and \cite{lipnikov}, the mimetic finite difference
method was applied to the 3D magnetostatic problems on general
polyhedral meshes.

Recently, weak Galerkin (WG) finite element methods have emerged as
a new numerical technique for approximating the solutions of partial
differential equations. The WG method was first introduced in
\cite{wy1202, wy2013} for the second order elliptic problem and was
further developed in \cite{wy1202, mwy1204, wy1302, cwang} with
other applications. Two basic principles for the WG finite element
method are: (1) the differential operators (e.g., gradient,
Laplacian, Hessian, curl, divergence etc.) are interpreted and
approximated as distributions over a set of generalized functions,
and (2) proper stabilizations are employed to enforce necessary weak
continuities for approximating functions in the correct topology. It
has been demonstrated that the WG method is highly flexible and
robust as a numerical tool that makes use of discontinuous piecewise
polynomials on polygonal or polyhedral finite element partitions.

The goal of this paper is to present a new discretization scheme for
the div-curl system (\ref{div-eqn})-(\ref{curl-eqn}) in any
connected domain with heterogeneous media by using the weak Galerkin
finite element approach. In particular, for the normal boundary
value problem (\ref{EQ:div-curl:normalBC}), a new variational
formulation is developed by using a Helmholtz decomposition which
avoids the computation of any harmonic fields $\bv\in
\mathds{H}_\mu(\Omega)$. The resulting formulation is a special case
of the model problem (\ref{EQ:model-problem-I}) detailed in Section
5. The div-curl system with the tangential boundary condition
(\ref{tangential-bc}) is also formulated as a special case of the
model problem (\ref{EQ:model-problem-I}). Therefore, our attention
is focused on the development of weak Galerkin finite element
methods for (\ref{EQ:model-problem-I}). It is readily seen that the
main difficulty in numerical methods for (\ref{EQ:model-problem-I})
lies in the term $\nabla\cdot(\mu \bu)$ which requires the
continuity of $\mu\bu$ in the normal direction of any interface,
particularly the interface of any two polyhedral elements. The weak
Galerkin finite element method offers an ideal solution, as the
continuity can be relaxed by a weak continuity implemented through a
carefully chosen stabilizer.

The paper is organized as follows. In Section 2, we introduce some
commonly used notations. In Section 3, we derive a formulation for
the div-curl problem (\ref{div-eqn})-(\ref{curl-eqn}) with normal
boundary condition (\ref{flux-bc}) by using Helmholtz decomposition.
Section 4 is devoted to a discussion of the div-curl system with
tangential boundary condition. In Section 5, we discuss a model
problem that is central to the solution of the div-curl system with
both the normal and tangential boundary conditions. In Section 6, we
introduce some discrete weak differential operators which are
necessary for the development of weak Galerkin finite element
methods in Section 7. Section 8 is devoted to a discussion of
existence and uniqueness for the solution of the weak Galerkin
discretizations. In Section 9, we derive some error equations. An
{\em inf-sup} condition is established in Section 10. Finally in
Section 11, we derive some optimal order error estimates for the WG
finite element approximations.

\section{Notations and Preliminaries}

Throughout the paper, we will follow the usual notation for Sobolev
spaces and norms \cite{ciarlet}. For any open bounded domain
$D\subset \mathbb{R}^3$ with Lipschitz continuous boundary, we use
$\|\cdot\|_{s,D}$ and $|\cdot|_{s,D}$ to denote the norm and
seminorm in the Sobolev space $H^s(D)$ for any $s\ge 0$,
respectively. The inner product in $H^s(D)$ is denoted by
$(\cdot,\cdot)_{s,D}$. The space $H^0(D)$ coincides with $L^2(D)$,
for which the norm and the inner product are denoted by $\|\cdot
\|_{D}$ and $(\cdot,\cdot)_{D}$, respectively.

Let $\mu=\{\mu_{ij}\}_{3\times 3}$ be a symmetric matrix, uniformly
positive definite in $D$ and with entries in $L^\infty(D)$.
Introduce the following Sobolev space
\begin{equation*}
\Hmudiv{D}=\{\bv\in [L^2(D)]^3: \ \nabla\cdot(\mu\bv)\in L^2(D)\},
\end{equation*}
with norm given by
$$
\|\bv\|_{\Hmudiv{D}} = (\|\bv\|^2_D+\|\nabla\cdot (\mu
\bv)\|^2_D)^{\frac12},
$$
where $\nabla\cdot(\mu\bv)$ is the divergence of $\mu\bv$. Any
$\bv\in \Hmudiv{D}$ can be assigned a trace for the normal component
of $\mu\bv$ on the boundary. The subspace with vanishing trace in
the normal component is denoted by
$$
\Hzmudiv{D}=\{\bv\in \Hmudiv{D}: \ (\mu\bv)\cdot\bn|_{\partial D} =
0\}.
$$
Denote the subspace of $\Hzmudiv{D}$ with divergence-free vectors by
$$
\mathds{F}_\mu(D)=\{\bv\in H({\rm div}_\mu; D): \
\nabla\cdot(\mu\bv) = 0\}.
$$
When $\mu=I$ is the identity matrix, the spaces $\Hmudiv{D}$,
$\Hzmudiv{D}$, and $\mathds{F}_\mu(D)$ shall be denoted as $H({\rm
div}; D)$, $H_0({\rm div}; D)$, and $\mathds{F}(D)$, respectively.

Denote by $H({\rm curl}; D)$ the following Sobolev space
$$
H({\rm curl}; D)=\{\bv: \bv\in [L^2(D)]^3, \nabla \times \bv\in
[L^2(D)]^3\}
$$
with norm defined by
$$
\|\bv\|_{H({\rm curl}; D)}=(\|\bv\|^2_D+\|\nabla\times
\bv\|^2_D)^{\frac12},
$$
where $\nabla\times\bv$ is the curl of $\bv$. Any $\bv\in H({\rm
curl}; D)$ can be assigned a trace for its tangential component on
the boundary. The subspace of $H({\rm curl}; D)$ with vanishing
trace in the tangential component is denoted by
$$
H_0({\rm curl}; D)=\{\bv\in H({\rm curl}; D): \
\bv\times\bn|_{\partial D} = 0 \}.
$$

When $D=\Omega$, we shall drop the subscript $D$ in the norm and
inner product notation.

For simplicity of notation, throughout the paper, we use
``$\lesssim$ '' to denote ``less than or equal to up to a general
constant independent of the mesh size or functions appearing in the
inequality".

\section{The div-curl System with Normal Boundary Condition}
The goal of this section is to derive a suitable variational
formulation for the problem (\ref{EQ:div-curl:normalBC}). Denote by
$H^0({\rm curl};\Omega)=\{\bv\in [L^2(\Omega)]^3: \ \curl\bv=0\}$
the space of curl-free fields. It is well-known that any vector
field $\bv\in H^0({\rm curl};\Omega)$ can be written as (see, e.g.,
\cite{valli3})
\begin{equation}\label{EQ:curl-free-by-gradient+}
\bv = \nabla \phi + \boldeta,
\end{equation}
where $\phi\in H^1(\Omega)$ and $\boldeta\in
\mathds{H}_{\mu}(\Omega)$. It follows that
$$
(\nabla\phi, \mu\boldeta) = -(\phi,
\nabla\cdot(\mu\boldeta))+\langle \phi,
\mu\boldeta\cdot\bn\rangle_\Gamma = 0.
$$
The decomposition (\ref{EQ:curl-free-by-gradient+}) is thus
orthogonal in the $\mu$-weighted $L^2$ norm in $H^0({\rm
curl};\Omega)$.

\subsection{Helmholtz decomposition}
Denote by $H_{0c}^1(\Omega)$ the set of functions in $H^1(\Omega)$
with vanishing value on $\Gamma_0$ and constant values on other
connected components of the boundary; i.e.,
\begin{equation*}\label{EQ:Nov-11-2014:H0c}
H_{0c}^1(\Omega)=\{\phi\in H^1(\Omega):\ \phi|_{\Gamma_0}=0, \
\phi|_{\Gamma_i}=c, \ i=1,\ldots, m\}.
\end{equation*}
Next, introduce a Sobolev space
\begin{equation*}
\begin{array}{c}
\mathds{Y}_\mu(\Omega) = \{\bv\in H_0({\rm curl};\Omega): \ \langle
\mu\bv\cdot\bn_i, 1\rangle_{\Gamma_i} = 0, \ i=1,\ldots, m\}.
\end{array}
\end{equation*}
The following Helmholtz decomposition holds the key to the
derivation of a suitable variational form for the problem
(\ref{EQ:div-curl:normalBC}).

\begin{theorem}\label{THM:helmholtz-2}
For any vector-valued function $\bu\in [L^2(\Omega)]^3$, there
exists a unique $\bpsi\in \mathds{Y}_\mu(\Omega)\cap \FmuOmega,\
\phi\in H^1(\Omega)/\mathbb{R}$, and $\boldeta\in
\mathds{H}_\mu(\Omega)$ such that
\begin{equation}\label{EQ:helmholtz-2}
\bu = \mu^{-1}\nabla\times\bpsi + \nabla\phi + {\boldsymbol\eta}.
\end{equation}
Moreover, the following estimate holds true
\begin{equation}\label{EQ:helmholtz-288}
\|\bpsi\|_{H({\rm curl}; \Omega)} + \|\nabla\phi\|_0 \leqC
(\kappa\bu,\bu)^{\frac12}.
\end{equation}
\end{theorem}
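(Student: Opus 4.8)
The plan is to construct the three pieces of the decomposition sequentially, using standard scalar and vector potential results, and then verify orthogonality and the a priori bound. First I would extract the irrotational–harmonic part. Given $\bu\in[L^2(\Omega)]^3$, consider the mixed (Neumann-type) problem: find $\phi\in H^1(\Omega)/\mathbb{R}$ such that $(\mu\nabla\phi,\nabla q) = (\mu\bu,\nabla q)$ for all $q\in H^1(\Omega)/\mathbb{R}$. This is solvable and well-posed by Lax–Milgram since $\mu$ is uniformly positive definite, and it yields $\|\nabla\phi\|_0\lesssim (\mu\bu,\bu)^{1/2}$. Set $\bw := \bu - \nabla\phi$. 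By construction $\di(\mu\bw)=0$ in the distributional sense and $\mu\bw\cdot\bn=0$ on $\Gamma$ (the latter because the test space includes functions not vanishing on the boundary components), so $\bw\in\FmuOmega$.

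Next I would split $\bw$ into a harmonic field plus a field orthogonal to the harmonic fields. Since $\mathds{H}_\mu(\Omega)$ is finite-dimensional and closed in the $\mu$-weighted $L^2$ inner product, write $\bw = \boldeta + \bw_0$ where $\boldeta\in\mathds{H}_\mu(\Omega)$ is the $\mu$-$L^2$ orthogonal projection of $\bw$ and $(\mu\bw_0,\boldzeta)=0$ for all $\boldzeta\in\mathds{H}_\mu(\Omega)$; note $\bw_0$ is still in $\FmuOmega$. It remains to represent $\bw_0 = \mu^{-1}\curl\bpsi$ for a suitable $\bpsi$. Here I would invoke the vector potential theory: because $\mu\bw_0$ is divergence-free, has vanishing normal trace on $\Gamma$, and has zero flux through each $\Gamma_i$ (this last being exactly the condition $\boldeta$ was removed to guarantee, via $\langle\mu\bw_0\cdot\bn_i,1\rangle_{\Gamma_i}=0$ since $\mathds{H}_\mu$ contains the relevant "generators"), there exists $\bpsi\in H_0(\curl;\Omega)$ with $\curl\bpsi=\mu\bw_0$; one can further normalize $\bpsi$ to be divergence-free (in the ordinary sense) and impose the side constraints $\langle\mu\bpsi\cdot\bn_i,1\rangle_{\Gamma_i}=0$, placing $\bpsi\in\mathds{Y}_\mu(\Omega)\cap\FmuOmega$. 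Standard regularity/stability of this vector potential construction gives $\|\bpsi\|_{H(\curl;\Omega)}\lesssim \|\mu\bw_0\|_0\lesssim (\mu\bu,\bu)^{1/2}$, which combined with the bound on $\nabla\phi$ yields \eqref{EQ:helmholtz-288}. Uniqueness follows from the mutual $\mu$-orthogonality of the three summands ($\curl$-free vs. the $\mu$-orthogonal complement of harmonic fields, and $\nabla\phi$ vs. $\boldeta$ by the computation already displayed before the theorem): testing $\mu^{-1}\curl\bpsi+\nabla\phi+\boldeta=0$ against each component forces each to vanish, and then $\curl\bpsi=0$ with $\bpsi\in\mathds{Y}_\mu\cap\FmuOmega$ forces $\bpsi=0$ (a curl-free field in that space is harmonic hence zero by the flux constraints), while $\nabla\phi=0$ in $H^1/\mathbb{R}$ and $\boldeta=0$.

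The main obstacle I anticipate is the vector potential step: ensuring that $\mu\bw_0$ being divergence-free with zero normal trace is enough to write it as a $\curl$ of something in $H_0(\curl;\Omega)$ requires that all the topological obstructions (the de Rham cohomology of $\Omega$, equivalently the flux functionals $\langle\cdot\,\cdot\bn_i,1\rangle_{\Gamma_i}$) have been killed. This is precisely why $\mathds{H}_\mu(\Omega)$ must be subtracted first, and why $\bpsi$ is sought in $\mathds{Y}_\mu(\Omega)$ rather than merely $H_0(\curl;\Omega)$ — one needs a compatible choice of the finitely many constants so that both existence of $\bpsi$ and its uniqueness hold. I would lean on the classical results on scalar and vector potentials in non-simply-connected domains (e.g. the references \cite{valli3,saranen} already cited) to supply this, checking carefully that the $\mu$-weighting only changes the constants in the estimates and not the structure of the argument.
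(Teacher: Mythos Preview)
Your approach is essentially correct but proceeds in the reverse order from the paper. The paper constructs $\bpsi$ \emph{first}, by posing the variational problem $(\mu^{-1}\curl\bpsi,\curl\bvarphi)=(\bu,\curl\bvarphi)$ on $\mathds{Y}_\mu(\Omega)\cap\FmuOmega$ and applying Lax--Milgram; coercivity comes from a Poincar\'e-type estimate $\|\bv\|_0\lesssim\|\curl\bv\|_0$ on that space, proved via the $H^1$ vector potential of Girault--Raviart~Thm~3.4. The residual $\bu-\mu^{-1}\curl\bpsi$ is then shown to be curl-free (via a Lagrange-multiplier reformulation) and split as $\nabla\phi+\boldeta$ using the curl-free decomposition~(\ref{EQ:curl-free-by-gradient+}). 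You instead extract $\phi$, then $\boldeta$, then $\bpsi$. The paper's order has the advantage that existence of $\bpsi$ in the right space, the gauge conditions, and the $H({\rm curl})$ bound all fall out of a single Lax--Milgram application; your order trades this for an explicit vector-potential step that needs more care than you indicate.

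Two points to tighten. First, the flux conditions $\langle\mu\bw_0\cdot\bn_i,1\rangle_{\Gamma_i}=0$ are automatic once $\mu\bw_0\cdot\bn=0$ on $\Gamma$; they are not what removing $\boldeta$ buys. What the $\mathds{H}_\mu$-projection actually delivers is $(\mu\bw_0,\bzeta)=0$ for every curl-free $\bzeta$ (combine your orthogonality with $(\mu\bw_0,\nabla q)=0$ and the decomposition~(\ref{EQ:curl-free-by-gradient+})), and \emph{that} is the closed-range condition placing $\mu\bw_0$ in $\curl\bigl(H_0({\rm curl};\Omega)\bigr)$. Girault--Raviart~Thm~3.4 alone only yields a potential in $[H^1(\Omega)]^3$ without tangential boundary condition, so you need either this closed-range argument or a sharper citation. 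Second, the gauge normalization must make $\bpsi$ $\mu$-divergence-free (not ordinarily divergence-free), and your stability bound $\|\bpsi\|_0\lesssim\|\curl\bpsi\|_0$ is precisely the Poincar\'e inequality the paper proves in~(\ref{EQ:May20-101}); you should prove it or cite it rather than absorb it into ``standard regularity/stability''.
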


\begin{proof}
Consider the problem of seeking $\bpsi\in \mathds{Y}_\mu(\Omega)\cap
\FmuOmega$ such that
\begin{equation}\label{EQ:May20-100}
(\mu^{-1}\nabla\times\bpsi, \nabla\times\bvarphi)=(\bu,
\nabla\times\bvarphi),\qquad \forall\ \bvarphi\in
\mathds{Y}_\mu(\Omega)\cap \FmuOmega.
\end{equation}
Denote by
$$
a(\bpsi,\bvarphi):=(\mu^{-1}\nabla\times\bpsi, \nabla\times\bvarphi)
$$
the bilinear form defined on $\mathds{Y}_\mu(\Omega)\cap \FmuOmega$.
We claim that $a(\cdot,\cdot)$ is coercive with respect to the
$H({\rm curl}; \Omega)$-norm. To this end, it suffices to derive the
following estimate
\begin{equation}\label{EQ:May20-101}
\|\bv\|_0 \lesssim \|\nabla\times\bv\|_0,\qquad\forall \bv\in
\mathds{Y}_\mu(\Omega)\cap \FmuOmega.
\end{equation}
In fact, for any $\bv\in \mathds{Y}_\mu(\Omega)\cap \FmuOmega$, from
Theorem 3.4 of \cite{girault-raviart}, there exists a vector
potential function $\bomega\in [H^1(\Omega)]^3$ such that
\begin{equation}\label{ForCM}
\mu\bv = \nabla\times\bomega,\ \nabla\cdot\bomega=0,\
\|\bomega\|_1\leqC (\mu\bv,\bv)^{\frac12}.
\end{equation}
Using the integration by parts and the condition $\bv\times\bn=0$ on
$\Gamma$, we have
$$
(\mu\bv,\bv) = (\nabla\times\bomega, \bv)=(\bomega,\nabla\times\bv).
$$
It follows from the Cauchy-Schwarz inequality and (\ref{ForCM}) that
$$
(\mu\bv,\bv) \leq \|\bomega\|_0\ \|\nabla\times\bv\|_0 \lesssim
(\mu\bv,\bv)^{\frac12}\ \|\nabla\times\bv\|_0,
$$
which implies (\ref{EQ:May20-101}).

Now from the Lax-Milgram Theorem, there exists a unique $\bpsi\in
\mathds{Y}_\mu(\Omega)\cap \FmuOmega$ satisfying the equation
(\ref{EQ:May20-100}) such that
$$
\|\bpsi\|_{H({\rm curl}; \Omega)}\lesssim \|\bu\|_0\lesssim (\mu
\bu,\bu).
$$
It is easy to see that $\mathds{Y}_\mu(\Omega)\cap \FmuOmega$ is
equivalent to the following quotient space:
$$
H_0({\rm curl};\Omega)/(\nabla H_{0c}^1(\Omega)) = \{\bv\in H_0({\rm
curl};\Omega):\ (\mu\bv, \nabla\phi)=0,\ \forall \phi\in
H_{0c}^1(\Omega) \}.
$$
Thus, by using a Lagrange multiplier $p\in H_{0c}^1(\Omega)$, the
problem (\ref{EQ:May20-100}) can be re-formulated as follows: Find
$\bpsi\in H_0({\rm curl};\Omega)$ and $p\in H_{0c}^1(\Omega)$ such
that
\begin{equation}\label{EQ:May20-102}
\begin{split}
(\mu^{-1}\nabla\times\bpsi, \nabla\times\bvarphi)+(\mu\nabla p,
\bvarphi)&=(\bu,
\nabla\times\bvarphi),\quad \forall\ \bvarphi\in H_0({\rm curl};\Omega),\\
(\bpsi, \mu\nabla s)&=0,\quad\qquad\qquad\forall\ s\in
H_{0c}^1(\Omega).
\end{split}
\end{equation}
It follows from the first equation of (\ref{EQ:May20-102}) that
$$
\nabla\times(\bu - \mu^{-1}\nabla\times \bpsi) - \mu\nabla p = 0.
$$
Since $p\in H_{0c}^1(\Omega)$, then the two terms on the left-hand
side of the above equation are orthogonal in the $\mu^{-1}$-weighted
$L^2(\Omega)$ norm. Thus,
$$
\nabla\times(\bu - \mu^{-1}\nabla\times \bpsi) =0 \Longrightarrow \
\bu - \mu^{-1}\nabla\times \bpsi \in H^0({\rm curl};\Omega).
$$
Thus, there exist unique $\phi\in H^1(\Omega)/\mathbb{R}$ and
$\boldeta\in \mathds{H}_\mu(\Omega)$ such that
$$
\bu - \kappa^{-1}\nabla\times \bpsi =\nabla \phi + \boldeta,
$$
which completes the proof of the theorem.
\end{proof}

\subsection{A variational formulation}\label{Section:div-curl-normalBC}

Assume that the div-curl problem (\ref{div-eqn})-(\ref{curl-eqn})
with boundary condition (\ref{flux-bc}) has a solution. Integrating
(\ref{div-eqn}) over the domain $\Omega$ and from the integration by
parts,  we have
$$
(f, 1) = (\nabla\cdot(\mu \bu), 1) = \langle (\mu\bu) \cdot\bn,
1\rangle_\Gamma.
$$
Using the boundary condition (\ref{flux-bc}), we arrive at the
following compatibility condition
\begin{equation}\label{Type-I-Compatibility-1}
(f, 1) =\langle \xi, 1\rangle_\Gamma.
\end{equation}
In addition,  taking the divergence to the equation
(\ref{curl-eqn}), we obtain a second compatibility condition
\begin{equation}\label{Type-I-Compatibility-2}
\nabla\cdot\bg=0\qquad\mbox{in} \ \Omega.
\end{equation}

\begin{lemma}\label{Lemma:Decomposition-type-I-bc}
Let $\bu$ be a solution of the div-curl system
(\ref{div-eqn})-(\ref{curl-eqn}) with the boundary condition
(\ref{flux-bc}). Then, $\bu$ can be decomposed as follows
\begin{equation}\label{EQ:section2:101}
\bu = \mu^{-1}\nabla\times\bpsi + \nabla\phi + \boldeta,
\end{equation}
where $\bpsi\in \mathds{Y}_\mu(\Omega)\cap \FmuOmega$ is the unique
solution of the following equation
\begin{equation}\label{EQ:decom:BC-I:001}
(\mu^{-1}\curl\bpsi,\curl\bzeta) =(\bg,\bzeta),\qquad\forall\
\bzeta\in \mathds{Y}_\mu(\Omega)\cap \FmuOmega,
\end{equation}
and $\phi\in H^1(\Omega)/\mathbb{R}$ satisfies
\begin{eqnarray}\label{EQ:decom:BC-I:002}
(\mu\nabla\phi,\nabla v) =\langle \xi, v\rangle - (f,
v),\qquad\forall \ v\in H^1(\Omega)/\mathbb{R}.
\end{eqnarray}
\end{lemma}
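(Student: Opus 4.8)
The plan is to apply the Helmholtz decomposition of Theorem~\ref{THM:helmholtz-2} to the given solution $\bu$, and then identify the three components using the differential equations and boundary conditions satisfied by $\bu$. First I would invoke Theorem~\ref{THM:helmholtz-2} to write $\bu = \mu^{-1}\nabla\times\bpsi + \nabla\phi + \boldeta$ with $\bpsi\in \mathds{Y}_\mu(\Omega)\cap \FmuOmega$, $\phi\in H^1(\Omega)/\mathbb{R}$, and $\boldeta\in\mathds{H}_\mu(\Omega)$; the decomposition exists and is unique, so it only remains to show that $\bpsi$ and $\phi$ must be the solutions of (\ref{EQ:decom:BC-I:001}) and (\ref{EQ:decom:BC-I:002}) respectively.

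For the $\bpsi$ component: taking the curl of (\ref{EQ:section2:101}) and using $\curl\nabla\phi = 0$ and $\curl\boldeta = 0$ (since $\boldeta\in\mathds{H}_\mu(\Omega)$), we get $\curl(\mu^{-1}\curl\bpsi) = \curl\bu = \bg$. Now I would test this against an arbitrary $\bzeta\in\mathds{Y}_\mu(\Omega)\cap\FmuOmega$, integrating by parts: $(\mu^{-1}\curl\bpsi,\curl\bzeta) = (\curl(\mu^{-1}\curl\bpsi),\bzeta) + \langle \mu^{-1}\curl\bpsi\times\bn,\bzeta\rangle_\Gamma$. The boundary term vanishes because $\bzeta\times\bn = 0$ on $\Gamma$ (as $\bzeta\in H_0({\rm curl};\Omega)$), leaving exactly $(\mu^{-1}\curl\bpsi,\curl\bzeta) = (\bg,\bzeta)$. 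By the coercivity of $a(\cdot,\cdot)$ established in the proof of Theorem~\ref{THM:helmholtz-2} (inequality (\ref{EQ:May20-101})) together with Lax--Milgram, this equation has a unique solution in $\mathds{Y}_\mu(\Omega)\cap\FmuOmega$, so $\bpsi$ is indeed that solution.

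For the $\phi$ component: I would apply $\nabla\cdot(\mu\,\cdot)$ to (\ref{EQ:section2:101}). Since $\nabla\cdot(\mu\mu^{-1}\curl\bpsi) = \nabla\cdot(\curl\bpsi) = 0$ and $\nabla\cdot(\mu\boldeta) = 0$, we obtain $\nabla\cdot(\mu\nabla\phi) = \nabla\cdot(\mu\bu) = f$. For the boundary data, dotting (\ref{EQ:section2:101}) with $\bn$ and multiplying by $\mu$: since $\mu\bpsi$-part contributes $\curl\bpsi\cdot\bn$ (which need not vanish pointwise) but the harmonic field satisfies $\mu\boldeta\cdot\bn = 0$ on $\Gamma$, I instead test the equation $\nabla\cdot(\mu\nabla\phi) = f$ weakly against $v\in H^1(\Omega)/\mathbb{R}$ and use Green's formula to write $(\mu\nabla\phi,\nabla v) = -(f,v) + \langle \mu\nabla\phi\cdot\bn, v\rangle_\Gamma$. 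Here I would use $\mu\nabla\phi\cdot\bn = \mu\bu\cdot\bn - \curl\bpsi\cdot\bn - \mu\boldeta\cdot\bn = \xi - \curl\bpsi\cdot\bn$ on $\Gamma$, together with the fact that $\curl\bpsi$ is divergence-free and $\bpsi\times\bn=0$, which makes $\langle\curl\bpsi\cdot\bn,v\rangle_\Gamma = 0$ for any $v\in H^1$ (integrate $\nabla\cdot(\curl\bpsi)=0$ against $v$ and use $\bpsi\times\bn=0$). This yields exactly (\ref{EQ:decom:BC-I:002}), and well-posedness of (\ref{EQ:decom:BC-I:002}) follows from the compatibility condition (\ref{Type-I-Compatibility-1}) via the Lax--Milgram theorem on $H^1(\Omega)/\mathbb{R}$.

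The main obstacle I anticipate is the careful handling of the boundary terms for the $\phi$-equation, specifically justifying that $\langle\curl\bpsi\cdot\bn, v\rangle_\Gamma = 0$ for all $v\in H^1(\Omega)$. This requires knowing that $\curl\bpsi\in H({\rm div};\Omega)$ with a well-defined normal trace and invoking the identity $\langle\curl\bpsi\cdot\bn, v\rangle_\Gamma = \langle\bpsi\times\bn, \nabla v\rangle_\Gamma + (\curl\bpsi,\nabla v) - (\bpsi,\curl\nabla v)$, where the last term vanishes, the second-to-last integrates to the boundary term by $\nabla\cdot(\curl\bpsi)=0$, and the first vanishes because $\bpsi\in H_0({\rm curl};\Omega)$ — one must ensure the duality pairings are legitimate given only $\bpsi\in H({\rm curl};\Omega)$, which is standard but needs the correct trace spaces.
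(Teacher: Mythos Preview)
Your proposal is correct and follows essentially the same approach as the paper: apply the Helmholtz decomposition of Theorem~\ref{THM:helmholtz-2}, then verify the two variational characterizations. One small tactical difference worth noting: for the $\phi$-equation the paper tests the decomposition (\ref{EQ:section2:101}) directly against $\mu\nabla\varphi$ and uses $(\curl\bpsi,\nabla\varphi)=0$ (immediate from $\bpsi\in H_0({\rm curl};\Omega)$ and $\curl\nabla\varphi=0$) to kill the $\bpsi$-contribution in one line, which sidesteps entirely the normal-trace computation $\langle\curl\bpsi\cdot\bn,v\rangle_\Gamma=0$ that you flagged as the main obstacle.
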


\begin{proof}
Using the Helmholtz decomposition (\ref{EQ:helmholtz-2}) in Theorem
\ref{THM:helmholtz-2}, there exist unique $\bpsi\in
\mathds{Y}_\mu(\Omega)\cap \FmuOmega$, $\phi\in
H^1(\Omega)/\mathbb{R}$, and $\boldeta\in \mathds{H}_\mu(\Omega)$
such that
\begin{equation}\label{EQ:November-14:001}
\bu = \mu^{-1} \nabla\times \bpsi + \nabla\phi +\boldeta.
\end{equation}
For any $\bzeta\in \mathds{Y}_\mu(\Omega)\cap \FmuOmega$,
$$
(\nabla\phi, \curl\bzeta) = (\curl\nabla\phi,
\bzeta)-\langle\nabla\phi, \bzeta\times\bn\rangle_\Gamma =0.
$$
Thus, by testing both sides of (\ref{EQ:November-14:001}) with
$\curl\bzeta$, we obtain
\begin{equation}\label{EQ:November-14:002}
(\bu-\boldeta,\curl\bzeta)=(\mu^{-1}\curl\bpsi,\curl\bzeta).
\end{equation}
From the equation (\ref{curl-eqn}) and the fact that
$\curl\boldeta=0$ and $\bzeta\times\bn=0$ on $\Gamma$, we obtain
$$
(\bu-\boldeta,\curl\bzeta) = (\curl\bu, \bzeta) =(\bg, \bzeta).
$$
Substituting the above into (\ref{EQ:November-14:002}) yields
\begin{equation}\label{EQ:November-14:003}
(\mu^{-1}\curl\bpsi,\curl\bzeta)=(\bg, \bzeta),\qquad\forall \
\bzeta\in \mathds{Y}_\mu(\Omega)\cap \FmuOmega,
\end{equation}
which verifies (\ref{EQ:decom:BC-I:001}).

Next, we test (\ref{EQ:November-14:001}) against any
$\mu\nabla\varphi$ to obtain
\begin{equation}\label{EQ:November-14:004}
(\bu, \mu\nabla\varphi) = (\mu^{-1}\curl\bpsi,
\mu\nabla\varphi)+(\mu\nabla\phi, \nabla\varphi) + (\boldeta,
\mu\nabla\varphi),\qquad \forall \varphi\in H^1(\Omega).
\end{equation}
Using the integration by parts,
$$
(\bu, \mu\nabla\varphi) = - (\nabla\cdot(\mu\bu), \varphi) + \langle
(\mu\bu)\cdot\bn, \varphi\rangle_\Gamma.
$$
Thus, from the equation (\ref{div-eqn}) and the boundary condition
(\ref{flux-bc}), we have
\begin{equation}\label{EQ:12-21:001}
(\bu, \mu\nabla\varphi) = - (\bbf, \varphi) + \langle \xi,
\varphi\rangle_\Gamma.
\end{equation}
Similarly, from the integration by parts and the fact that
$\boldeta\in \mathds{H}_\mu(\Omega)$,
\begin{equation}\label{EQ:12-21:002}
(\boldeta, \mu\nabla\varphi) = - (\nabla\cdot(\mu\boldeta), \varphi)
+ \langle (\mu\boldeta)\cdot\bn, \varphi\rangle_\Gamma = 0.
\end{equation}
Since $\bpsi\in H_0({\rm curl};\Omega)$, then
\begin{equation}\label{EQ:12-21:003}
(\curl\bpsi, \nabla\varphi) = 0.
\end{equation}
Substituting (\ref{EQ:12-21:001})-(\ref{EQ:12-21:003}) into
(\ref{EQ:November-14:004}) gives rise to
\begin{equation}\label{EQ:November-14:005}
(\mu\nabla\phi, \nabla\varphi) = \langle \xi, \varphi\rangle_\Gamma-
(\bbf, \varphi), \qquad \forall \varphi\in H^1(\Omega).
\end{equation}
The above problem has a unique solution in the quotient space
$H^1(\Omega)/\mathbb{R}$ due to the compatibility condition
(\ref{Type-I-Compatibility-1}). This completes the proof.
\end{proof}

By using a Lagrange multiplier $p$, the problem
(\ref{EQ:decom:BC-I:001}) can be re-formulated as a saddle point
problem that seeks $\bpsi\in \mathds{Y}_\mu(\Omega)\cap H({\rm
div}_\mu;\Omega)$ and $p\in L^2(\Omega)$ satisfying
\begin{equation}\label{eq-100.01}
\begin{split}
(\mu^{-1}\curl\bpsi,\curl\bzeta) + (\nabla\cdot(\mu\bzeta),
p)&=(\bg,\bzeta),\qquad\forall\ \bzeta\in \mathds{Y}_\mu(\Omega)\cap
H({\rm div}_\mu;\Omega),\\
(\nabla\cdot(\mu\bpsi), w)&= 0, \qquad \qquad\forall\ w\in
L^2(\Omega).
\end{split}
\end{equation}

Going back to the well-posed problem (\ref{EQ:div-curl:normalBC}),
it is easily seen that the solution of (\ref{EQ:div-curl:normalBC})
also admits the decomposition (\ref{EQ:section2:101}). Thus, using
$\bpsi\times\bn=0$ and $\mu\boldeta\cdot\bn=0$ on $\Gamma$, we
obtain
\begin{equation*}
\begin{array}{rcll}
0&=&(\mu\bu,\boldeta)\qquad &\mbox{by last condition in (\ref{EQ:div-curl:normalBC})} \\
 &=&(\curl\bpsi, \boldeta)+(\mu\nabla \phi, \boldeta)+ (\mu\boldeta,
 \boldeta)\qquad &\mbox{by the decomposition
 (\ref{EQ:section2:101})}\\
 &=&(\bpsi, \curl\boldeta)-(\phi, \nabla\cdot(\mu\boldeta))+(\mu\boldeta,
 \boldeta)\qquad &\mbox{by integration by parts}\\
 &=&(\mu\boldeta, \boldeta).\qquad &\mbox{by $\boldeta\in \mathds{H}_\mu(\Omega)$}
\end{array}
\end{equation*}
It follows that $\boldeta\equiv 0$. The result can be summarized as
follows.

\begin{theorem} \label{THM:formulation-4-normalBVP}
Let $\bu$ be the solution of the div-curl system
(\ref{EQ:div-curl:normalBC}). Then, $\bu$ can be represented
\begin{equation}\label{EQ:section2:101-new}
\bu = \nabla\times\bpsi + \nabla\phi,
\end{equation}
where $\bpsi\in \mathds{Y}_\mu(\Omega)\cap \FmuOmega$ is the unique
solution of the system of equations (\ref{eq-100.01}), and $\phi\in
H^1(\Omega)$ is determined by the equation
(\ref{EQ:decom:BC-I:002}).
\end{theorem}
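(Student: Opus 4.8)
The plan is to read the theorem off from Lemma \ref{Lemma:Decomposition-type-I-bc}, the chain of equalities displayed immediately before the statement, and the Brezzi saddle-point theory applied to (\ref{eq-100.01}). Since a solution $\bu$ of (\ref{EQ:div-curl:normalBC}) is in particular a solution of the div-curl system (\ref{div-eqn})-(\ref{curl-eqn}) with the normal boundary condition (\ref{flux-bc}), Lemma \ref{Lemma:Decomposition-type-I-bc} applies and gives $\bu = \mu^{-1}\curl\bpsi+\nabla\phi+\boldeta$, with $\bpsi\in\mathds{Y}_\mu(\Omega)\cap\FmuOmega$ the unique solution of (\ref{EQ:decom:BC-I:001}), $\phi\in H^1(\Omega)/\mathbb{R}$ the solution of (\ref{EQ:decom:BC-I:002}), and $\boldeta\in\mathds{H}_\mu(\Omega)$. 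Testing the orthogonality condition (the last equation of (\ref{EQ:div-curl:normalBC})) against $\boldeta$ itself, substituting this decomposition, and integrating by parts using $\bpsi\times\bn=0=\mu\boldeta\cdot\bn$ on $\Gamma$ together with $\curl\boldeta=0$ and $\nabla\cdot(\mu\boldeta)=0$, one reduces the identity to $(\mu\boldeta,\boldeta)=0$; the uniform positive definiteness of $\mu$ then forces $\boldeta\equiv 0$, which is precisely the computation already displayed before the statement. Hence $\bu=\mu^{-1}\curl\bpsi+\nabla\phi$, which is the representation (\ref{EQ:section2:101-new}), and $\phi$ is indeed determined by (\ref{EQ:decom:BC-I:002}).

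It remains to identify $\bpsi$ as the unique solution of the saddle-point system (\ref{eq-100.01}). I would equip $V:=\mathds{Y}_\mu(\Omega)\cap H({\rm div}_\mu;\Omega)$ with the norm $(\|\bzeta\|_0^2+\|\curl\bzeta\|_0^2+\|\nabla\cdot(\mu\bzeta)\|_0^2)^{1/2}$, which is well defined because $\mathds{Y}_\mu(\Omega)\subset H_0({\rm curl};\Omega)$ already supplies $\curl\bzeta\in[L^2(\Omega)]^3$, and with respect to which both $a(\bpsi,\bzeta):=(\mu^{-1}\curl\bpsi,\curl\bzeta)$ and the functional $(\bg,\cdot)$ are bounded on $V$. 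The kernel of $b(\bzeta,w):=(\nabla\cdot(\mu\bzeta),w)$ on $V\times L^2(\Omega)$ equals $\{\bzeta\in\mathds{Y}_\mu(\Omega):\nabla\cdot(\mu\bzeta)=0\}=\mathds{Y}_\mu(\Omega)\cap\FmuOmega$, the same identification as in the proof of Theorem \ref{THM:helmholtz-2}, and on this kernel $a(\bzeta,\bzeta)=(\mu^{-1}\curl\bzeta,\curl\bzeta)$ is coercive in the $V$-norm by the Friedrichs-type inequality (\ref{EQ:May20-101}). The remaining Brezzi hypothesis is the inf-sup condition for $b$, i.e., that $\bzeta\mapsto\nabla\cdot(\mu\bzeta)$ maps $V$ onto $L^2(\Omega)$ with a bounded right inverse; I would establish it by constructing, for given $w\in L^2(\Omega)$, the solution $q\in H_{0c}^1(\Omega)$ of $(\mu\nabla q,\nabla v)=-(w,v)$ for all $v\in H_{0c}^1(\Omega)$ (Lax-Milgram, with coercivity from the Poincar\'e inequality on $H_{0c}^1(\Omega)$) and setting $\bzeta=\nabla q$: then $\curl\bzeta=0$, $\bzeta\times\bn=0$ on $\Gamma$ because $q$ is constant on each connected component of $\Gamma$, $\nabla\cdot(\mu\bzeta)=w$ in $\Omega$, and $\langle\mu\bzeta\cdot\bn_i,1\rangle_{\Gamma_i}=0$ (by testing against a $v\in H_{0c}^1(\Omega)$ with $v|_{\Gamma_i}=1$ and $v|_{\Gamma_j}=0$ for $j\ne i$), so that $\bzeta\in V$ with $\|\bzeta\|_V\lesssim\|w\|_0$ and $b(\bzeta,w)=\|w\|_0^2$.

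With both Brezzi conditions in hand, the system (\ref{eq-100.01}) has a unique solution $(\bpsi^{\ast},p)\in V\times L^2(\Omega)$; its second equation places $\bpsi^{\ast}\in\mathds{Y}_\mu(\Omega)\cap\FmuOmega$, and restricting the first equation to test functions $\bzeta$ in that kernel, where $b$ vanishes, shows that $\bpsi^{\ast}$ satisfies (\ref{EQ:decom:BC-I:001}); since the solution of (\ref{EQ:decom:BC-I:001}) is unique, $\bpsi^{\ast}=\bpsi$, which completes the proof. The main obstacle is the inf-sup step, namely the surjectivity of the weighted divergence from $\mathds{Y}_\mu(\Omega)\cap H({\rm div}_\mu;\Omega)$ onto $L^2(\Omega)$ with a bounded right inverse, since every other ingredient is either inherited from Theorem \ref{THM:helmholtz-2} and Lemma \ref{Lemma:Decomposition-type-I-bc} or is a routine integration-by-parts bookkeeping.
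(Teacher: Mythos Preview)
Your proof is correct and follows essentially the same approach as the paper: the paper derives the representation by applying Lemma~\ref{Lemma:Decomposition-type-I-bc} and then showing $\boldeta\equiv 0$ via the orthogonality-to-harmonic-fields condition, exactly as you do (indeed, that computation is the chain of equalities displayed immediately before the theorem, and the theorem is introduced as a summary of it). The only difference is one of thoroughness: the paper simply asserts that (\ref{eq-100.01}) is the Lagrange-multiplier reformulation of (\ref{EQ:decom:BC-I:001}) and hence shares its unique solution $\bpsi$, whereas you supply a full Brezzi-theory verification of well-posedness for (\ref{eq-100.01}), including the inf-sup construction via the auxiliary $H_{0c}^1(\Omega)$ problem (a device the paper itself uses later, in the proof of Lemma~\ref{Lemma:Solution4P1IsUnique}, to identify the Lagrange multiplier).
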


Theorem \ref{THM:formulation-4-normalBVP} indicates that a suitable
variational formation for the div-curl system
(\ref{EQ:div-curl:normalBC}) is given by (\ref{eq-100.01}) and
(\ref{EQ:decom:BC-I:002}), which are independently defined and each
has one and only one solution in the corresponding Sobolev space.
The problem (\ref{EQ:decom:BC-I:002}) is a standard second order
elliptic equation for which many existing numerical methods can be
applied. But the problem (\ref{eq-100.01}) requires a study in
numerical methods, which is the central topic of this paper.

\section{The div-curl System with Tangential Boundary Condition}
\label{Section:div-curl-tangentialBC} The goal of this section is to
derive a variational formulation for the div-curl system with the
tangential boundary condition (\ref{tangential-bc}). Recall that the
complete problem reads: given $\bg\in [L^2(\Omega)]^3$ with
$\nabla\cdot\bg=0$ in $\Omega$, find the vector-valued function
$\bu$ such that
\begin{equation}\label{EQ:div-curl:tangentialBC}
\left\{
\begin{split}
\nabla\cdot(\mu\bu) &= f, \qquad\text{in}\ \Omega,\\
\nabla\times\bu &= \bg,\qquad\text{in}\ \Omega,\\
\bu\times\bn & = \bchi,\qquad \text{on}\ \Gamma,\\
\langle \mu\bu\cdot\bn_i, 1\rangle_{\Gamma_i} &=\beta_i,\qquad
i=1,\ldots, m.
\end{split}
\right.
\end{equation}

The problem (\ref{EQ:div-curl:tangentialBC}) can be interpreted as a
constraint minimization problem with an object function given by
$$
J(\bv) = \frac12 \|\curl\bv - \bg\|_0^2,
$$
subject to the following constraints
\begin{equation}\label{EQ:div-curl:constraints}
\left\{
\begin{split}
\nabla\cdot(\mu\bv) &= f, \qquad\text{in}\ \Omega,\\
\bv\times\bn & = \bchi,\qquad \text{on}\ \Gamma,\\
\langle \mu\bv\cdot\bn_i, 1\rangle_{\Gamma_i} &=\beta_i,\qquad
i=1,\ldots, m.
\end{split}
\right.
\end{equation}
By introducing a Lagrange multiplier $p$, the corresponding
variational problem seeks $\bu\in H({\rm curl}; \Omega)\cap H({\rm
div}_\mu;\Omega)$ and $p\in L^2(\Omega)$ with $\bu\times\bn=\bchi$
on $\Gamma$ and $\langle \mu\bu\cdot\bn_i, 1\rangle_{\Gamma_i}
=\beta_i$ for $i=1,\ldots, m$ such that
\begin{equation}\label{eq-800.08}
\begin{split}
(\curl\bu,\curl\bzeta) + (\nabla\cdot(\mu\bzeta),
p)&=(\curl\bg,\bzeta),\qquad\forall\ \bzeta\in
\mathds{Y}_\mu(\Omega)\cap
H({\rm div}_\mu;\Omega),\\
(\nabla\cdot(\mu\bu), w)&= (f,w), \ \quad \qquad\forall\ w\in
L^2(\Omega).
\end{split}
\end{equation}

The problem (\ref{eq-800.08}) is the desired variational form for
the div-curl system with tangential boundary condition
(\ref{tangential-bc}). The structure of this variational problem is
essentially the same as that of (\ref{eq-100.01}) arising from the
normal boundary condition.

\begin{lemma}\label{Lemma:Solution4P1IsUnique}
The solution to the variational problem (\ref{eq-800.08}) is unique
for any connected domain $\Omega$.
\end{lemma}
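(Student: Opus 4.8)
The plan is to establish uniqueness by showing that the homogeneous problem (zero data) admits only the trivial solution. Suppose $(\bu, p)$ solves \eqref{eq-800.08} with $\bg = 0$, $f = 0$, $\bchi = 0$, and $\beta_i = 0$ for all $i$; thus $\bu \in \mathds{Y}_\mu(\Omega)$ and $\nabla\cdot(\mu\bu) = 0$, so in fact $\bu \in \mathds{Y}_\mu(\Omega)\cap \FmuOmega$. The second equation gives $\nabla\cdot(\mu\bu) = 0$ directly. For the first equation, I would test with $\bzeta = \bu$ itself (which lies in $\mathds{Y}_\mu(\Omega)\cap \Hmudiv{\Omega}$), using that the right-hand side vanishes and that $(\nabla\cdot(\mu\bu), p) = 0$. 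This yields $(\curl\bu, \curl\bu) = 0$, hence $\curl\bu = 0$.

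So $\bu$ is a vector field with $\curl\bu = 0$, $\nabla\cdot(\mu\bu) = 0$, $\bu\times\bn = 0$ on $\Gamma$, and $\langle\mu\bu\cdot\bn_i, 1\rangle_{\Gamma_i} = 0$ for $i = 1,\ldots,m$. I want to conclude $\bu = 0$. Since $\curl\bu = 0$ on the connected domain $\Omega$ and $\bu\times\bn = 0$ on the whole boundary $\Gamma$, the decomposition \eqref{EQ:curl-free-by-gradient+} (or rather its analogue for $H_0(\curl;\Omega)$) lets me write $\bu = \nabla\phi$ with $\phi \in H^1(\Omega)$; because $\bu\times\bn = 0$ on each connected component $\Gamma_i$, $\phi$ is constant on each $\Gamma_i$, so $\phi \in H^1_{0c}(\Omega)$ after normalizing $\phi|_{\Gamma_0} = 0$. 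The harmonic-field component drops out: since $\bu \in \mathds{Y}_\mu(\Omega)$ and $\mathds{Y}_\mu(\Omega)\cap\FmuOmega$ is equivalent to $H_0(\curl;\Omega)/(\nabla H^1_{0c}(\Omega))$ as noted after \eqref{EQ:May20-102}, any curl-free element of $\mathds{Y}_\mu(\Omega)\cap\FmuOmega$ that is also $\mu$-orthogonal to all gradients of $H^1_{0c}$ must be the corresponding harmonic field, but here we will see it forced to vanish.

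Now I would integrate by parts: $0 = (\nabla\cdot(\mu\bu), \phi) = -(\mu\bu, \nabla\phi) + \langle \mu\bu\cdot\bn, \phi\rangle_\Gamma = -(\mu\bu, \bu) + \sum_{i=0}^m \phi|_{\Gamma_i}\langle\mu\bu\cdot\bn_i, 1\rangle_{\Gamma_i}$. The boundary term vanishes: on $\Gamma_0$ we have $\phi = 0$, and on each $\Gamma_i$ with $i \ge 1$ the constant $\phi|_{\Gamma_i}$ pulls out of the pairing, which is zero by the homogeneous condition $\langle\mu\bu\cdot\bn_i, 1\rangle_{\Gamma_i} = 0$. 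Hence $(\mu\bu, \bu) = 0$, and uniform positive-definiteness of $\mu$ gives $\bu \equiv 0$. Finally, to get $p = 0$ (uniqueness of the multiplier), I would use the first equation of \eqref{eq-800.08} once more with $\bu = 0$: it reduces to $(\nabla\cdot(\mu\bzeta), p) = 0$ for all $\bzeta \in \mathds{Y}_\mu(\Omega)\cap\Hmudiv{\Omega}$, and since the divergence operator maps this space onto $L^2(\Omega)$ (one can realize any prescribed $L^2$ divergence by solving an auxiliary elliptic problem, choosing $\bzeta$ as a gradient), we get $p \equiv 0$.

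The main obstacle is the surjectivity claim $\nabla\cdot(\mu\cdot): \mathds{Y}_\mu(\Omega)\cap\Hmudiv{\Omega} \to L^2(\Omega)$ used in the last step, together with the companion observation that a curl-free $H_0(\curl)$ field is a gradient — these are the places where the connectedness of $\Omega$ and the structure of the harmonic fields genuinely enter, and where one must be careful about which component spaces ($H^1$ versus $H^1/\mathbb{R}$ versus $H^1_{0c}$) the potentials live in and whether the boundary-integral compatibility is automatically satisfied. I expect the surjectivity to follow from solving $\nabla\cdot(\mu\nabla q) = w$ with a Neumann-type condition, but the bookkeeping of the constants $\beta_i$ and the components $\Gamma_i$ needs care.
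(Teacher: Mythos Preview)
Your approach is valid and reaches the same conclusion, but it is genuinely dual to the paper's. The paper exploits the \emph{divergence} information: from $\nabla\cdot(\mu\bu)=0$ together with $\langle\mu\bu\cdot\bn_i,1\rangle_{\Gamma_i}=0$ it invokes Theorem~3.4 of \cite{girault-raviart} to write $\mu\bu=\curl\bphi$ for some $\bphi\in[H^1(\Omega)]^3$, and then
\[
(\mu\bu,\bu)=(\curl\bphi,\bu)=(\bphi,\curl\bu)+\langle\bn\times\bphi,\bu\rangle_\Gamma=0
\]
using $\curl\bu=0$ and $\bu\times\bn=0$. You instead exploit the \emph{curl} information: from $\curl\bu=0$ and $\bu\times\bn=0$ you write $\bu=\nabla\phi$ with $\phi\in H_{0c}^1(\Omega)$, and then
\[
(\mu\bu,\bu)=(\mu\bu,\nabla\phi)=-(\nabla\cdot(\mu\bu),\phi)+\langle\mu\bu\cdot\bn,\phi\rangle_\Gamma=0
\]
using $\nabla\cdot(\mu\bu)=0$ and the flux conditions. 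Each argument uses one half of the data to produce a potential and the other half to kill the integration-by-parts remainder; the paper's vector-potential route is arguably cleaner because it cites a single theorem, whereas your scalar-potential claim ($\ker(\curl)\cap H_0({\rm curl};\Omega)=\nabla H_{0c}^1(\Omega)$) is true but needs its own justification---the cleanest is to extend $\bu$ by zero to $\mathbb{R}^3$, observe the extension is still curl-free, and use simple connectedness of $\mathbb{R}^3$. Your paragraph invoking the quotient description after \eqref{EQ:May20-102} does not actually supply this; that passage only says gradients of $H_{0c}^1$ lie in $H_0({\rm curl};\Omega)$, not that they exhaust the curl-free part.

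For the multiplier $p$, both you and the paper argue surjectivity of $\bzeta\mapsto\nabla\cdot(\mu\bzeta)$ onto $L^2(\Omega)$ via $\bzeta=\nabla w$ with $\nabla\cdot(\mu\nabla w)=p$. One correction: the boundary condition must be of \emph{Dirichlet} type, $w|_{\Gamma_0}=0$ and $w|_{\Gamma_i}=\gamma_i$ with the constants $\gamma_i$ tuned so that $\langle\mu\nabla w\cdot\bn_i,1\rangle_{\Gamma_i}=0$, exactly as the paper does; a Neumann condition would not give $\nabla w\times\bn=0$ on $\Gamma$, so $\bzeta$ would fail to lie in $\mathds{Y}_\mu(\Omega)$.
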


\begin{proof}
It suffices to show that all the solutions corresponding to the one
with homogeneous data are trivial. To this end, let $(\bu; p)$ be a
solution of (\ref{eq-800.08}) with homogeneous data, then
\begin{eqnarray}
\curl\bu &=& 0, \quad \mbox{in} \
\Omega,\label{EQ:12-14:001}\\
\nabla\cdot(\mu\bu) &=& 0,\quad \mbox{in} \
\Omega,\label{EQ:12-14:002}\\
\langle \mu\bu\cdot\bn_i,1\rangle_{\Gamma_i} &=& 0,\quad i=1,\ldots,
m,\label{EQ:12-14:003} \\
\bu\times\bn &=& 0,\quad \mbox{on}\
\partial\Omega.\label{EQ:12-14:004}
\end{eqnarray}
From (\ref{EQ:12-14:002}) and (\ref{EQ:12-14:003}), the conditions
of Theorem 3.4 in \cite{girault-raviart} are satisfied for the
vector-valued function $\mu \bu$. Thus, there exists a vector
potential $\bphi\in [H^1(\Omega)]^3$ such that $\mu\bu=\curl\bphi$.
It follows that
\begin{equation*}
\begin{split}
(\mu\bu, \bu) & =(\curl\bphi, \bu) \\
& = (\bphi, \curl\bu)+\langle \bn\times\bphi,
\bu\rangle_{\Gamma} \ \ \quad\mbox{by integration by parts} \\
& =\langle \phi, \bu\times\bn\rangle_{\Gamma} \ \quad\qquad\qquad\qquad\mbox{by (\ref{EQ:12-14:001}) and triple product property}\\
& =0,\ \quad\qquad\qquad\qquad\qquad\qquad\mbox{by
(\ref{EQ:12-14:004})}
\end{split}
\end{equation*}
which implies $\bu\equiv 0$. Consequently, one has the following
equation
$$
(\nabla\cdot(\mu\bzeta), p)=0,\qquad \forall \bzeta\in
\mathds{Y}_\mu(\Omega)\cap H({\rm div}_\mu;\Omega),
$$
which leads to $p=0$ by selecting a particular vector field
$\bzeta\in \mathds{Y}_\mu(\Omega)\cap H({\rm div}_\mu;\Omega)$ such
that $\nabla\cdot(\mu\bzeta)=p$. Such a vector field is given by
$\bzeta=\nabla w$, where $w$ is the solution of the following
equation
$$
\nabla\cdot(\mu\nabla w) = p, \quad w|_{\Gamma_0}=0, \quad
w|_{\Gamma_i}=\gamma_i,\ i=1,\ldots, m.
$$
Here, $\{\gamma_i\}$ is a set of real numbers which can be tuned so
that $\langle \mu\nabla w\cdot\bn_i, 1\rangle_{\Gamma_i}=0$.
\end{proof}

\section{A Model Problem}
It is readily seen from Subsections \ref{Section:div-curl-normalBC}
and Section \ref{Section:div-curl-tangentialBC} that the core
problem of study for the div-curl system with either the normal or
the tangential boundary conditions is one structured in the form of
(\ref{eq-100.01}) and (\ref{eq-800.08}). Thus, we shall consider a
general problem in the form of (\ref{eq-100.01}) and
(\ref{eq-800.08}) as follows. Assume the following data are given:
$\bg\in [L^2(\Omega)]^3,\ f\in L^2(\Omega)$, \ $\xi\in
[H^{-\frac12}(\Gamma)]^3$, \ a symmetric and positive definite
matrix $\kappa=\left(\kappa_{ij}(\bx)\right)_{3\times 3}$ in the
domain $\Omega$, and a set of real numbers $\beta_i, i=1,\ldots, m$.

\medskip

\begin{model-problem}
Find $\bu\in H({\rm curl};\Omega) \cap H({\rm div}_\mu;\Omega)$ and
$p\in L^2(\Omega)$ such that
\begin{equation}\label{EQ:model-problem-I}
\left\{
\begin{array}{rll}
(\kappa \curl\bu,\curl\bv) + (\nabla\cdot(\mu\bv), p)= &  (\bg,\bv),
&\quad\forall\ \bv\in
\mathds{Y}_\mu(\Omega)\cap H({\rm div}_\mu;\Omega),\\
(\nabla\cdot(\mu\bu), w) =& (f,w), &\quad \forall\ w\in L^2(\Omega),
\\
\langle \mu\bu\cdot\bn_i, 1\rangle_{\Gamma_i} =&\beta_i, &\quad
i=1,\ldots, m,\\
\bu\times\bn =&\xi, &\quad \mbox{on } \Gamma.
\end{array}
\right.
\end{equation}
Here $\bn$ is the unit outward normal direction to the boundary
$\Gamma$.
\end{model-problem}

The constraint of $\langle \mu\bv\cdot\bn_i, 1\rangle_{\Gamma_i}=0$
for the test space $\mathds{Y}_\mu(\Omega)\cap H({\rm
div}_\mu;\Omega)$ is cumbersome in the design of numerical methods
for the problem (\ref{EQ:model-problem-I}). One possible remedy is
to relax this constraint by using a Lagrange multiplier denoted by
$\lambda=(\lambda_1,\ldots, \lambda_m)$. The corresponding weak
formulation seeks $\bu\in H({\rm curl};\Omega) \cap H({\rm
div}_\mu;\Omega)$, $p\in L^2(\Omega)$, and $\lambda\in \bbR^m$ such
that $\bu\times\bn =\xi$ on $\Gamma$ and
\begin{equation}\label{EQ:model-problem-I-new}
\left\{
\begin{array}{c}\displaystyle
(\kappa \curl\bu,\curl\bv) + (\nabla\cdot(\mu\bv), p) +\sum_{i=1}^m
\langle \mu\bv\cdot\bn_i, \lambda_i\rangle_{\Gamma_i}= (\bg,\bv),\\
\displaystyle(\nabla\cdot(\mu\bu), w) + \sum_{i=1}^m \langle
\mu\bu\cdot\bn_i, s_i\rangle_{\Gamma_i} = (f,w) + \sum_{i=1}^m
\beta_i s_i,
\end{array}
\right.
\end{equation}
for all $\bv\in H_0({\rm curl};\Omega)\cap H({\rm div}_\mu;\Omega)
,\ w\in L^2(\Omega),$ and $s\in \bbR^m$. It is not hard to see that
the problems (\ref{EQ:model-problem-I-new}) and
(\ref{EQ:model-problem-I}) are equivalent to each other.

\medskip
For simplicity of analysis, throughout the paper, we assume that
$\kappa$ and $\mu$ are piecewise constant, symmetric and positive
definite matrices on the domain $\Omega$ with respect to any finite
element partitions to be specified in forthcoming sections.

\section{Weak Differential Operators}

The model problem (\ref{EQ:model-problem-I}) is formulated with two
principle differential operators: divergence and curl. This section
shall introduce the notion of weak divergence operator for
vector-valued functions of the form $\mu\bv$. For completeness, we
also review the definition for the weak curl operator. These weak
differential operators shall be discretized by using polynomials,
which leads to discretizations for the model problem
(\ref{EQ:model-problem-I}).

Let $K\subset\Omega$ be any open bounded domain with boundary
$\partial K$. Denote by $\bn$ the unit outward normal direction on
$\partial K$. Let the space of weak vector-valued functions in $K$
be given by
\begin{eqnarray*}
V(K)=\{\bv=\{\bv_0,\bv_{b}\}: \ \textbf{v}_0\in [L^2(K)]^3,\
\bv_{b}\in [L^2(\partial K)]^3\},
\end{eqnarray*}
where $\textbf{v}_0$ represents the value of $\textbf{v}$ in the
interior of $K$, and $\bv_{b}$ represents certain information of
$\bv$ on the boundary $\partial K$. There are two pieces of
information of $\bv$ on $\partial K$ that are necessary for defining
the variational formulation (\ref{EQ:model-problem-I}): (1) the
tangential component $\bn\times(\bv \times\bn)$, and (2) the normal
component of $\mu\bv$ on $\partial K$. The normal component of the
vector $\mu\bv$ is given by $(\mu\bv\cdot\bn)\bn$. Intuitively
speaking, the vector $\bv_b$ should carry those two pieces of
orthogonal information by summing up $(\mu\bv\cdot\bn)\bn$ and
$\bn\times(\bv\times \bn)$:
$$
\bv_b =(\mu\bv\cdot\bn)\bn +\bn\times(\bv\times \bn).
$$
It is easy to check the following identity:
\begin{equation}\label{interteller}
\bv_b\times\bn = (\bn\times(\bv\times \bn))\times\bn = \bv\times\bn.
\end{equation}

\subsection{Weak divergence}
Following \cite{wy1302}, for any $\textbf{v}\in V(K)$, we define the
weak divergence of $\mu\bv$, denoted by $\nabla_w\cdot
(\mu\textbf{v})$, as a bounded linear functional in the Sobolev
space $H^1(K)$ such that
\begin{equation*}
\langle \nabla_w\cdot
(\mu\textbf{v}),\varphi\rangle_K=-(\mu\textbf{v}_0,\nabla
\varphi)_K+\langle \bv_{b}\cdot\bn,\varphi\rangle_{\partial
K},\qquad \forall \ \varphi\in H^1(K).
\end{equation*}
The discrete weak divergence of $\mu\bv$, denoted by
$\nabla_{w,r,K}\cdot (\mu\textbf{v})$, is defined as the unique
polynomial in $P_r(K)$, satisfying
\begin{equation}\label{2.2}
(\nabla_{w,r,K}\cdot
(\mu\textbf{v}),\varphi)_K=-(\mu\textbf{v}_0,\nabla
\varphi)_K+\langle {\bv}_{b}\cdot\bn, \varphi\rangle_{\partial K},
\quad\forall\ \varphi\in P_r(K).
\end{equation}

Assume that $\bv_0$ is sufficiently smooth such that
$\nabla\cdot(\mu\bv_0)\in L^2(K)$. By applying the integration by
parts to the first term on the right-hand side of (\ref{2.2}), we
have
\begin{equation}\label{2.2new}
\begin{split}
(\nabla_{w,r,K} \cdot(\mu\bv),\varphi)_K =(\nabla\cdot(\mu\bv_0),
\varphi)_K+\langle (\bv_{b}-\mu\bv_0)\cdot\bn,
\varphi\rangle_{\partial K},
\end{split}
\end{equation}
for any $\varphi\in [P_r(K)]^3$.

\subsection{Weak curl}
The weak curl of $\textbf{v} \in V(K)$ (see \cite{mwyz}), denoted by
$\nabla_w \times \textbf{v}$, is defined as a bounded linear
functional in the Sobolev space $[H^1(K)]^3$, such that
$$
\langle \nabla_w \times \textbf{v},\varphi\rangle_K=(\textbf{v}_0,
\nabla\times \varphi)_K-\langle \textbf{v}_{b}\times
\textbf{n},\varphi\rangle_{\partial K},\quad\ \forall \ \varphi\in
[H^1(K)]^3.
$$

The discrete weak curl of $\textbf{v} \in V(K)$, denoted by
$\nabla_{w,r,K} \times \textbf{v}$, is defined as the unique
polynomial in $[P_r (K)]^3$,  satisfying
\begin{equation}\label{2.5}
(\nabla_{w,r,K} \times \textbf{v},\varphi)_K=(\textbf{v}_0,
\nabla\times \varphi)_K-\langle \textbf{v}_{b}\times
\textbf{n},\varphi\rangle_{\partial K}, \quad\forall\varphi\in [P_r
(K)]^3.
\end{equation}

For sufficiently smooth $\bv_0$ such that $\nabla\times\bv_0\in
[L^2(K)]^3$, by applying the integration by parts to the first term
on the right-hand side of (\ref{2.5}), we obtain
\begin{equation}\label{2.5new}
\begin{split}
(\nabla_{w,r,K} \times \bv,\varphi)_K =(\nabla\times\bv_0,
\varphi)_K-\langle (\bv_{b}-\bv_0)\times\bn,
\varphi\rangle_{\partial K},
\end{split}
\end{equation}
for any $\varphi\in [P_r(K)]^3$.

\section{Weak Galerkin Discretizations}

A polyhedral partition of $\Omega$ is a family of polyhedra $\{T_j:\
j=1,2,\ldots\}$ such that the two following conditions are
satisfied: (1) the union of all polyhedra $T_j$ is the domain
$\Omega$, and (2) the intersection of any two polyhedra, $T_j\cap
T_i \ (i\neq j)$, is either empty or a common face of $T_j$ and
$T_i$. Each partition cell $T_j$ is called an element. A polyhedral
partition with finite number of elements is called a finite element
partition of $\Omega$. Assume that $\{T_j\}_{j=1,\ldots, N}$ is a
finite element partition of $\Omega$ that is shape-regular according
to \cite{wy1202}. Denote by $h_T=\diameter(T)$ the diameter of the
cell/element $T$, and $h=\max_{T} h_T$ the meshsize of the partition
$\T_h=\{T_j\}_{j=1,\ldots, N}$. Denote by ${\cal E}_h$ the set of
all faces in ${\cal T}_h$ so that each $e\in \E_h$ is either on the
boundary of $\Omega$ or shared by two elements. Denote by
$\E^0_h=\E_h\setminus {\partial\Omega}$ the set of all interior
faces in $\E_h$. By definition, for each interior face $e\in
\E_h^0$, there are two elements $T_j$ and $T_i$, $i\neq j$, such
that $e=T_j\cap T_i$.

Let $k\ge 1$ be any integer. For each element $T\in\T_h$, define the
local finite element space as
$$
\bV(k,T)=\{ \bv=\{\bv_0,\bv_b\}: \bv_0\in [P_k(T)]^3, \bv_b\in
[P_{k}(e)]^3,\ e\in (\partial T\cap\E_h)\}.
$$
The global weak finite element space for the vector-component is
given by
\begin{equation}\label{EQ:global-WFES}
\bV_h=\{\bv=\{\bv_0,\bv_b\}:\; \bv|_T\in \bV(k,T),\
\bv_b|_{\pT_1\cap e}=\bv_b|_{\pT_2\cap e},\ T\in \T_h, e\in\E_h^0\},
\end{equation}
where $\bv_b|_{\pT_j\cap e}$ is the value of $\bv_b$ on the face $e$
as seen from the element $T_j,\ j=1,2$. The finite element space for
the Lagrange multiplier $p$ is defined as
$$
W_h =\{q: \ q\in L^2(\Omega), \ q|_T\in P_{k-1}(T), T\in\T_h\}.
$$

The discrete weak divergence $(\nabla_{w,k-1}\cdot ) $ and the
discrete weak curl $(\nabla_{w,k-1} \times)$ can be computed by
using (\ref{2.2}) and (\ref{2.5}) on each element; i.e.,
\begin{align*}
(\nabla_{w,k-1}\cdot (\mu\textbf{v}))|_T=&\nabla_{w,k-1,T}\cdot
(\mu\textbf{v}|_T), \quad \ \textbf{v}\in \bV_h,\\
(\nabla_{w,k-1}\times  \textbf{v})|_T=&\nabla_{w,k-1,T}\times
(\textbf{v}|_T), \quad \textbf{v}\in \bV_h.
\end{align*}
For simplicity of notation, we shall drop the subscript $k-1$ from
the notations $(\nabla_{w,k-1}\cdot )$ and
$(\nabla_{w,k-1}\times)$ from now on.

Introduce the following bilinear forms
\begin{align}\label{EQ:November20-2014:001}
a(\textbf{v},\textbf{w})=&(\kappa\nabla_{w}\times
\textbf{v},\nabla_{w} \times
\textbf{w})_h+s(\textbf{v},\textbf{w}),\\
b(\textbf{v},q)=&(\nabla_{w}\cdot (\mu\textbf{v}),
q)_h,\label{EQ:November20-2014:002}
\end{align}
where
\begin{align}\nonumber
(\kappa\nabla_{w}\times \textbf{v},\nabla_{w} \times \textbf{w})_h
=&\sum_{T\in {\cal T}_h}(\kappa\nabla_{w}\times \textbf{v},\nabla_{w} \times \textbf{w})_T,\\
(\nabla_{w}\cdot (\mu\textbf{v}), q)_h =&\sum_{T\in {\cal
T}_h}(\nabla_{w}\cdot (\mu\textbf{v}), q)_T,
\nonumber \\
 s(\textbf{v},\textbf{w})=&\sum_{T\in {\cal
T}_h}h_T^{-1}\langle
 (\mu\textbf{v}_0-\textbf{v}_b)\cdot\bn,
(\mu\textbf{w}_0-\textbf{w}_b)\cdot\bn\rangle_{\partial T}\label{EQ:StabilityTerm}\\
&+\sum_{T\in {\cal T}_h}h_T^{-1}\langle
 (\textbf{v}_0-\textbf{v}_b)\times\bn,
(\textbf{w}_0-\textbf{w}_b)\times\bn\rangle_{\partial T}.\nonumber
\end{align}

\bigskip

We are now in a position to describe a finite element method for the
model problem given in (\ref{EQ:model-problem-I}) and
(\ref{EQ:model-problem-I-new}). Consider first the variational
formulation (\ref{EQ:model-problem-I-new}). Observe that the test
space is $H_0({\rm curl}; \Omega)\cap H({\rm div}_\mu;\Omega)$. The
corresponding analogue in the weak Galerkin setting is the following
weak finite element space
\begin{equation}\label{EQ:global-WFES-testspace-problem1}
\bV_h^{0}=\{\bv=\{\bv_0,\bv_b\}\in\bV_h:\; \bv_b\times\bn = 0 \
\mbox{on} \ \Gamma\}.
\end{equation}

\begin{algorithm}\label{algo1} \emph{(weak Galerkin for the problem
(\ref{EQ:model-problem-I-new}))} Find $\textbf{u}_h= \{\textbf{u}_0,
\textbf{u}_b\} \in \bV_h$, $p_h\in W_h$, and $\lambda\in \bbR^m$
with $\bu_b\times\bn = \bQ_b\xi$ on $\Gamma$ such that
\begin{align}\label{3.3}
a(\textbf{u}_h,\textbf{v})+b(\textbf{v},p_h)+\sum_{i=1}^m\langle
\bv_b\cdot\bn_i, \lambda_i\rangle_{\Gamma_i}= (\bg,\textbf{v}_0),
\qquad \forall \ \textbf{v}=\{\textbf{v}_0, \textbf{v}_b\}\in
\bV_h^{0},
\\
b(\textbf{u}_h,w)+\sum_{i=1}^m\langle \bu_b\cdot\bn_i,
s_i\rangle_{\Gamma_i} =(f,w)+\sum_{i=1}^m\beta_i s_i, \qquad \forall
\ w\in W_h, \ s\in\bbR^m.\label{3.4}
\end{align}
Here $\bQ_b\xi$ is the usual $L^2$-projection to $[P_{k}(e)]^3$ for
each boundary face $e\in (\Gamma\cap\E_h)$.
\end{algorithm}

The Lagrange multiplier $\lambda$ can be eliminated from the
formulation (\ref{3.3})-(\ref{3.4}) if the test space $\bV_h^{0}$ is
replaced by
\begin{equation}\label{EQ:global-WFES-testspaceU-problem1}
\bU_h^{0}=\{\bv=\{\bv_0,\bv_b\}\in\bV_h^{0}:\; \langle
\bv_b\cdot\bn_i, 1\rangle_{\Gamma_i}=0,\ i=1,\ldots, m\}.
\end{equation}
The following is the corresponding weak Galerkin finite element
scheme.

\begin{algorithm}\label{algo1-original} \emph{(weak Galerkin for the
problem (\ref{EQ:model-problem-I}))} Find $\textbf{u}_h=
\{\textbf{u}_0, \textbf{u}_b\} \in \bV_h$ and $p_h\in W_h$ with
$\bu_b\times\bn = \bQ_b\xi$ on $\Gamma$ such that
\begin{align}\label{3.3-original}
a(\textbf{u}_h,\textbf{v})+b(\textbf{v},p_h) & = (\bg,\textbf{v}_0),
\qquad \forall \ \textbf{v}=\{\textbf{v}_0, \textbf{v}_b\}\in
\bU_h^{0},
\\
b(\textbf{u}_h,w) & = (f,w), \qquad \forall \ w\in W_h, \label{3.4-original}\\
\langle \bu_b\cdot\bn_i, 1\rangle_{\Gamma_i} & =
\beta_i,\qquad\quad\ \  i=1,\ldots, m.\label{3.5-original}
\end{align}
\end{algorithm}

Note that the Algorithms \ref{algo1} and \ref{algo1-original} are
equivalent in the sense that both give the same numerical solution
$\bu_h$ and $p_h$. Thus, it is sufficient to develop a convergence
theory for Algorithm \ref{algo1-original} only.

\section{Existence and Uniqueness}

The goal of this section is to show that the WG Algorithm
\ref{algo1-original} has one and only one solution.

We first introduce a topology in the weak finite element space
$\bV_h$ by defining a semi-norm as follows
\begin{equation}\label{Eq:bar1-norm}
\begin{split}
\3bar\textbf{v}\3bar_1 = &\Big(\sum_{T\in {\cal T}_h}
\|\nabla_{w}\times \textbf{v}\|_T^2+ \| \nabla_w\cdot
(\mu\textbf{v}) \|^2_{ T} \\
& + h_T^{-1}\|
 \mu\textbf{v}_0\cdot\bn-\textbf{v}_b\cdot\bn\|^2_{\partial T}
+ h_T^{-1}\|
 \textbf{v}_0\times\bn-\textbf{v}_b\times\bn\|^2_{\partial T}
 \Big)^{\frac{1}{2}}.
 \end{split}
\end{equation}
For convenience, we set
\begin{equation}\label{Eq:bar-norm}
\begin{split}
\3bar\textbf{v}\3bar^2 := a(\bv, \bv)= &\sum_{T\in {\cal T}_h}
(\kappa\nabla_{w}\times \textbf{v}, \nabla_w\times\bv)_T\\
&+ h_T^{-1}\|
 \mu\textbf{v}_0\cdot\bn-\textbf{v}_b\cdot\bn\|^2_{\partial T}
+ h_T^{-1}\|
 \textbf{v}_0\times\bn-\textbf{v}_b\times\bn\|^2_{\partial T}
 \Big)^{\frac{1}{2}}.
 \end{split}
\end{equation}

In the finite element space $W_h$, we introduce a mesh-dependent
norm
\begin{equation}\label{Eq:bar-norm-Wh0}
\|q\|_{W_h}^2 = h^2 \sum_{T\in {\cal T}_h}\|\nabla q\|_T^2 +
h\sum_{e\in {\cal E}_h^0} \|\jump{q}_e\|_e^2 + h\sum_{i=0}^m
\|q-\bar q_i\|_{\Gamma_i}^2,
\end{equation}
where $\jump{q}_e$ stands for the jump of $q$ on the interior face
$e\in \E_h^0$, $\bar q_0=0$, and $\bar q_i$ is the average of $q$ on
the connected boundary component $\Gamma_i,\ i=1,\ldots, m$.

\begin{lemma}\label{Lemma:3bar-one-is-a-norm}
Assume that the domain $\Omega$ is connected. Then, the semi-norm
$\3bar\cdot\3bar_1$ defined as in (\ref{Eq:bar1-norm}) defines a
norm in the linear space $\bU_h^{0}$.
\end{lemma}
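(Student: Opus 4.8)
The plan is to show that if $\3bar\bv\3bar_1 = 0$ for some $\bv \in \bU_h^0$, then $\bv = 0$ (both $\bv_0$ and $\bv_b$). Vanishing of the seminorm gives four pieces of information on every element $T$: (i) $\nabla_w \times \bv = 0$ on $T$; (ii) $\nabla_w \cdot (\mu\bv) = 0$ on $T$; (iii) $\mu\bv_0 \cdot \bn = \bv_b \cdot \bn$ on $\partial T$; and (iv) $\bv_0 \times \bn = \bv_b \times \bn$ on $\partial T$. First I would use (iii) and (iv) together with the identities (\ref{2.2new}) and (\ref{2.5new}) for the discrete weak operators: since the boundary mismatch terms vanish, (i) and (ii) reduce to $\nabla\times\bv_0 = 0$ and $\nabla\cdot(\mu\bv_0) = 0$ on each $T$ in the classical (elementwise) sense. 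Combined with (iv) and the continuity of $\bv_b$ across interior faces, the tangential trace of $\bv_0$ is single-valued across every interior face, and it also vanishes on $\Gamma$ (because $\bv \in \bV_h^0$ forces $\bv_b \times \bn = 0$ there); similarly (iii) gives that the normal trace of $\mu\bv_0$ is single-valued across interior faces. Hence $\bv_0$ is a global $H(\curl;\Omega)$ function with $\curl\bv_0 = 0$ in $\Omega$ and $\bv_0 \times \bn = 0$ on $\Gamma$, and $\mu\bv_0$ is a global $H(\di_\mu;\Omega)$ function with $\nabla\cdot(\mu\bv_0) = 0$.

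Next I would invoke the structure theorem for such fields. Because $\curl\bv_0 = 0$ and $\bv_0 \times \bn = 0$ on all of $\Gamma$, and $\Omega$ is connected (so in particular $H^0(\curl;\Omega)$-fields with zero tangential trace are gradients of $H^1_{0c}$ functions, up to harmonic fields), write $\bv_0 = \nabla\varphi + \boldeta$ with $\varphi \in H^1_{0c}(\Omega)$ and $\boldeta \in \mathds{H}_\mu(\Omega)$, using the orthogonal decomposition referenced just after (\ref{EQ:curl-free-by-gradient+}). The condition $\langle \bv_b \cdot \bn_i, 1\rangle_{\Gamma_i} = 0$ built into $\bU_h^0$, together with $\mu\bv_0\cdot\bn = \bv_b\cdot\bn$ on $\partial T$, yields $\langle \mu\bv_0\cdot\bn_i, 1\rangle_{\Gamma_i} = 0$ for $i = 1,\dots,m$, i.e. $\bv_0 \in \mathds{Y}_\mu(\Omega)$; and $\nabla\cdot(\mu\bv_0) = 0$ means $\bv_0 \in \FmuOmega$ as well. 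By the coercivity estimate (\ref{EQ:May20-101}) established in the proof of Theorem \ref{THM:helmholtz-2}, $\|\bv_0\|_0 \lesssim \|\curl\bv_0\|_0 = 0$, so $\bv_0 \equiv 0$. (Alternatively one can argue directly: testing $(\mu\bv_0,\bv_0)$ with the vector potential $\bomega$ from Theorem 3.4 of \cite{girault-raviart} and integrating by parts, exactly as in the proof of Lemma \ref{Lemma:Solution4P1IsUnique}, gives $(\mu\bv_0,\bv_0) = 0$.)

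Finally, with $\bv_0 \equiv 0$, conditions (iii) and (iv) collapse to $\bv_b \cdot \bn = 0$ and $\bv_b \times \bn = 0$ on $\partial T$ for every $T$, and since $\bn$ and the tangent plane span $\mathbb{R}^3$ pointwise, this forces $\bv_b = 0$ on every face. Hence $\bv = \{\bv_0,\bv_b\} = 0$, and $\3bar\cdot\3bar_1$ is a norm on $\bU_h^0$. The main obstacle is the passage from the elementwise conditions to a genuine global conforming field: one must carefully check that the weak-continuity conditions (iii), (iv) on $\partial T$ plus single-valuedness of $\bv_b$ across interior faces really do glue $\bv_0$ into $H(\curl;\Omega) \cap H(\di_\mu;\Omega)$ with the correct boundary behavior, and that the extra integral constraints in $\bU_h^0$ place $\bv_0$ precisely in the space $\mathds{Y}_\mu(\Omega) \cap \FmuOmega$ where the Poincaré-type inequality (\ref{EQ:May20-101}) applies. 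Everything after that is a direct application of results already proved in the excerpt.
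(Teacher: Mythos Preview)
Your proposal is correct and follows essentially the same route as the paper's proof: extract the four local consequences of $\3bar\bv\3bar_1=0$, use (\ref{2.2new}) and (\ref{2.5new}) to pass from weak to classical operators elementwise, glue $\bv_0$ into a global field in $H_0({\rm curl};\Omega)\cap H({\rm div}_\mu;\Omega)$ satisfying $\curl\bv_0=0$, $\nabla\cdot(\mu\bv_0)=0$, $\bv_0\times\bn=0$ on $\Gamma$, and $\langle\mu\bv_0\cdot\bn_i,1\rangle_{\Gamma_i}=0$, then conclude $\bv_0\equiv 0$ and hence $\bv_b\equiv 0$. The paper invokes the vector-potential argument of Lemma~\ref{Lemma:Solution4P1IsUnique} for the last step, which is precisely your parenthetical alternative; your primary suggestion to apply the Poincar\'e-type estimate (\ref{EQ:May20-101}) is an equally valid shortcut since you have verified $\bv_0\in\mathds{Y}_\mu(\Omega)\cap\FmuOmega$. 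The Helmholtz-type decomposition $\bv_0=\nabla\varphi+\boldeta$ you mention is not actually needed (and you do not use it), so you may simply drop that sentence.
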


\begin{proof} It suffices to verify the
positivity property for $\3bar\cdot\3bar_1$. To this end, assume
that $\3bar\bv\3bar_1=0$ for some $\bv\in \bU_h^0$. Thus,
\begin{eqnarray}\label{Eq:Feb8:800}
\nabla_w\times\bv &=& 0,\qquad \mbox{in}\
T,\\
\quad \nabla_w\cdot(\mu\bv) &=& 0, \qquad \mbox{in}\
T,\label{Eq:Feb8:801}\\
 \mu\bv_0\cdot\bn - \bv_b\cdot\bn &=& 0,\qquad \mbox{on}\
\partial T\label{Eq:Feb8:802} \\
\bv_0\times\bn - \bv_b\times\bn &=& 0,\qquad \mbox{on}\
\partial T.\label{Eq:Feb8:8903}
\end{eqnarray}

Using (\ref{Eq:Feb8:800}), (\ref{Eq:Feb8:8903}) and (\ref{2.5new}),
for any $\varphi\in [P_{k-1}(T)]^3$, we have
\begin{equation*}
\begin{split}
0=&(\nabla_{w}\times \bv,\varphi)_T\\
=& (\nabla\times \bv_0,\varphi)_T-\langle( \bv_b-\bv_0)\times
\bn,\varphi\rangle_{\partial T} \\
=& (\nabla\times \bv_0,\varphi)_T.
\end{split}
\end{equation*}
It follows that $\nabla\times \bv_0=0$ on each element $T\in {\cal
T}_h$, which along with (\ref{Eq:Feb8:8903}) implies $\bv_0\in
H({\rm curl};\Omega)$ and
\begin{equation}\label{Eq:Feb8:14803}
\nabla\times\bv_0=0,\qquad \mbox{in} \ \Omega.
\end{equation}

Next, using (\ref{Eq:Feb8:801}), (\ref{Eq:Feb8:802}) and
(\ref{2.2new}), for any $\varphi\in P_{k-1}(T)$, we have
\begin{equation*}
\begin{split}
0=&(\nabla_{w}\cdot(\mu\bv),\varphi)_T   \\
=& (\nabla\cdot(\mu\bv_0),\varphi)_T+\langle( \bv_b-\mu\bv_0)\cdot
\bn,\varphi\rangle_{\partial T}  \\
=& (\nabla\cdot(\mu\bv_0),\varphi)_T.
\end{split}
\end{equation*}
It follows that $\nabla\cdot(\mu\bv_0)=0$ on each element
$T\in\T_h$, which, together with (\ref{Eq:Feb8:802}), gives rise to
$\mu\bv_0\in H({\rm div};\Omega)$ and
\begin{equation}\label{Eq:Feb8:1803}
\nabla\cdot(\mu\bv_0)=0,\qquad \mbox{in} \ \Omega.
\end{equation}

Combining (\ref{Eq:Feb8:802})-(\ref{Eq:Feb8:8903}) with the fact
that $\bv\in \bU_h^{0}$ yields
\begin{equation*}
\begin{split}
\bv_0\times\bn & = 0, \quad  \mbox{on}\ \Gamma,\\
\langle\mu\bv_0\cdot\bn_i, 1\rangle_{\Gamma_i} & =0,\quad
i=1,\ldots, m,
\end{split}
\end{equation*}
which, along with (\ref{Eq:Feb8:14803}) and (\ref{Eq:Feb8:1803}),
implies $\bv_0=0$ and hence $\bv_b=0$; see the proof of Lemma
\ref{Lemma:Solution4P1IsUnique} for details.
\end{proof}

\begin{theorem}\label{theorem3.1} The weak Galerkin finite element algorithms
\ref{algo1} and \ref{algo1-original} have one and only one solution.
\end{theorem}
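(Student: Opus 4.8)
The plan is to reduce the existence-uniqueness statement for Algorithms~\ref{algo1} and \ref{algo1-original} to a standard finite-dimensional linear-algebra argument. Since both algorithms yield the same pair $(\bu_h, p_h)$, it suffices to treat Algorithm~\ref{algo1-original}. After subtracting a particular function that matches the inhomogeneous data (a $\bv^\ast\in\bV_h$ with $\bv_b^\ast\times\bn=\bQ_b\xi$ on $\Gamma$ and $\langle\bv_b^\ast\cdot\bn_i,1\rangle_{\Gamma_i}=\beta_i$, which exists by choosing suitable boundary values), the problem becomes: find $\bu_h\in\bU_h^0$ and $p_h\in W_h$ solving a square linear system whose right-hand side lives in the dual of $\bU_h^0\times W_h$. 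Because this is a finite-dimensional square system, existence for all data is equivalent to uniqueness for the homogeneous problem; hence I would focus entirely on showing that the only solution of the homogeneous system is the trivial one.

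First I would take $(\bu_h; p_h)$ solving \eqref{3.3-original}--\eqref{3.5-original} with $\bg=0$, $f=0$, $\xi=0$, $\beta_i=0$, so that $\bu_h\in\bU_h^0$. Testing \eqref{3.3-original} with $\bv=\bu_h$ kills the $b(\bu_h,p_h)$ term by \eqref{3.4-original} (take $w=p_h$), giving $a(\bu_h,\bu_h)=\3bar\bu_h\3bar^2=0$. This controls the weak curl and both jump/stabilizer terms but \emph{not} directly $\nabla_w\cdot(\mu\bu_h)$, so I would additionally invoke \eqref{3.4-original} with $w=\nabla_w\cdot(\mu\bu_h)\in W_h$ to conclude $\nabla_w\cdot(\mu\bu_h)=0$ on every $T$. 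At that point all four quantities in the seminorm $\3bar\cdot\3bar_1$ vanish, so $\3bar\bu_h\3bar_1=0$, and Lemma~\ref{Lemma:3bar-one-is-a-norm} (applicable since $\Omega$ is connected and $\bu_h\in\bU_h^0$) forces $\bu_h=0$.

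With $\bu_h=0$ in hand, \eqref{3.3-original} reduces to $b(\bv,p_h)=0$ for all $\bv\in\bU_h^0$, i.e.\ $(\nabla_w\cdot(\mu\bv),p_h)_h=0$ for all such $\bv$. To conclude $p_h=0$ I would exhibit, for the given $p_h\in W_h$, a test field $\bv\in\bU_h^0$ whose discrete weak divergence equals (the $L^2$-projection of) $p_h$, mimicking the continuous construction at the end of the proof of Lemma~\ref{Lemma:Solution4P1IsUnique}: namely solve $\nabla\cdot(\mu\nabla w)=p_h$ with $w|_{\Gamma_0}=0$, $w|_{\Gamma_i}=\gamma_i$ for tuned constants $\gamma_i$ making $\langle\mu\nabla w\cdot\bn_i,1\rangle_{\Gamma_i}=0$, take $\bv=\{\,\mu\nabla w,\ (\mu\nabla w\cdot\bn)\bn+\bn\times(\nabla w\times\bn)\,\}$, and use \eqref{2.2new} to identify $\nabla_w\cdot(\mu\bv)$ with the projection of $p_h$; then $0=(\nabla_w\cdot(\mu\bv),p_h)_h=\|Q_h p_h\|^2$ and hence $p_h=0$.

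The main obstacle I anticipate is the last step: producing the divergence-realizing test function $\bv$ cleanly at the discrete level. The continuous $\mu\nabla w$ need not be piecewise polynomial, so one must either (i) work with the $L^2$-projection of $p_h$ into $W_h$ and exploit that $\nabla_w\cdot(\mu\bv)$ depends on $\bv$ only through $(\mu\bv_0,\nabla\varphi)_K$ and $\bv_b\cdot\bn$ against $P_{k-1}$, so a suitable polynomial (or projected) $\bv$ suffices, or (ii) invoke an inf-sup-type surjectivity of $\bv\mapsto\nabla_w\cdot(\mu\bv)$ from $\bU_h^0$ onto $W_h$ — essentially the discrete analogue of the inf-sup condition that Section~10 establishes. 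I would present the argument so that it relies only on the elementary observation that the relation \eqref{2.2} determines $\nabla_w\cdot(\mu\bv)$ via finitely many linear functionals of $\bv$, together with the boundary-constant tuning already used in Lemma~\ref{Lemma:Solution4P1IsUnique}, thereby keeping the existence-uniqueness proof self-contained and independent of the sharper inf-sup analysis to come.
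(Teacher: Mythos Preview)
Your reduction to a square homogeneous system and the argument that $\bu_h=0$ are correct and coincide with the paper: test \eqref{3.3-original} with $\bv=\bu_h$, use \eqref{3.4-original} to kill $b(\bu_h,p_h)$ and to obtain $\nabla_w\cdot(\mu\bu_h)=0$, conclude $\3bar\bu_h\3bar_1=0$, and apply Lemma~\ref{Lemma:3bar-one-is-a-norm}.

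The obstacle you flag in the $p_h=0$ step is real, and your continuous-auxiliary-problem route is both roundabout and contains a slip: you want $\bv_0=\nabla w$, not $\mu\nabla w$, so that $\mu\bv_0=\mu\nabla w$ is what enters \eqref{2.2}. Even after correcting this and projecting $\nabla w$ into $\bV_h$, completing the argument cleanly requires the commutative identity $\nabla_w\cdot(\mu Q_h\bv)=\mathcal{Q}_h\nabla\cdot(\mu\bv)$ of Lemma~\ref{lemma4.2}, which appears only later; so as written the proposal does not close the gap.

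The paper sidesteps all of this with a purely discrete test function built from $p_h$ itself. Expanding $b(\bv,p_h)$ via \eqref{2.2} and summing over elements gives
\[
b(\bv,p_h)=-\sum_{T\in\T_h}(\mu\bv_0,\nabla p_h)_T+\sum_{e\in\E_h}\langle\bv_b\cdot\bn_e,\jump{p_h}\rangle_e.
\]
Choosing $\bv=\{-h^2\nabla p_h;\ h\,\delta_e\jump{p_h}\,\bn_e\}$, with $\delta_e=0$ on faces $e\subset\Gamma_i$ for $i\ge1$ and $\delta_e=1$ otherwise, produces an element of $\bU_h^0$ (the boundary part is parallel to $\bn$, and vanishes on $\Gamma_i$ for $i\ge1$) and yields
\[
0=b(\bv,p_h)=h^2\sum_{T\in\T_h}(\mu\nabla p_h,\nabla p_h)_T+h\sum_{e\in\E_h^0\cup\Gamma_0}\|\jump{p_h}\|_e^2,
\]
forcing $p_h$ to be piecewise constant, continuous across interior faces, and zero along $\Gamma_0$; connectedness of $\Omega$ then gives $p_h\equiv0$. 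This explicit construction is precisely the seed of the inf-sup test function in Lemma~\ref{lvq2}, so your instinct to look toward Section~10 was on target; the paper simply inlines the relevant piece here rather than invoking the full inf-sup estimate.
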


\begin{proof}
Since the Algorithms \ref{algo1} and \ref{algo1-original} are
equivalent, then it suffices to deal with Algorithms
\ref{algo1-original}. As the number of unknowns is the same as the
number of equations in (\ref{3.3-original})-(\ref{3.5-original}),
the existence of solution is equivalent to the uniqueness.

Let $(\bu_h^{(j)}; p_h^{(j)}) \in \bV_h\times W_h, \ j=1,2,$ be two
solutions of (\ref{3.3-original})-(\ref{3.5-original}), and set
$$
\bz_h =\bu_h^{(1)} - \bu_h^{(2)},\quad \gamma_h =p_h^{(1)} -
p_h^{(2)}.
$$
It is clear that $(\bz_h;\gamma_h)\in \bU_h^{0}\times W_h$ satisfies
\begin{align}\label{Nov.27.001}
a(\textbf{z}_h,\textbf{v})+b(\textbf{v},\gamma_h) & = 0, \qquad
\forall \ \textbf{v}=\{\textbf{v}_0, \textbf{v}_b\}\in \bU_h^{0},
\\
b(\textbf{z}_h,w) & = 0, \qquad \forall \ w\in W_h.
\label{Nov.27.002}
\end{align}
By letting $\bv=\bz_h$ in (\ref{Nov.27.001}) and $w=\gamma_h$ in
(\ref{Nov.27.002}), we obtain
$$
a(\bz_h, \bz_h) = 0,\quad \nabla_w \cdot(\mu\bz_h) = 0,
$$
which leads to $\3bar\bz_h\3bar_1=0$. It follows from Lemma
\ref{Lemma:3bar-one-is-a-norm} that $\bz_h =0$, and thus
$\bu_h^{(1)}\equiv \bu_h^{(2)}$.

To show $\gamma_h=0$, we use (\ref{Nov.27.001}) and $\bz_h=0$ to
obtain
$$
b(\textbf{v},\gamma_h) = 0, \qquad \forall \
\textbf{v}=\{\textbf{v}_0, \textbf{v}_b\}\in \bU_h^{0}.
$$
From the definition of weak divergence (\ref{2.2}),
\begin{equation}\label{Nov.10}
\begin{split}
b(\bv,\gamma_h) & = \sum_{T\in {\cal T}_h} (\nabla_w \cdot (\mu\bv),\gamma_h)_T\\
& = \sum_{T\in {\cal T}_h} -(\mu\bv_0,\nabla \gamma_h)_T+\langle
\bv_b\cdot \bn, \gamma_h\rangle_{\partial T}\\
& = - \sum_{T\in {\cal T}_h} (\mu\bv_0,\nabla \gamma_h)_T +
\sum_{e\in\E_h} \langle \bv_b\cdot\bn_e, \jump{\gamma_h}\rangle_e,
\end{split}
\end{equation}
where $\bn_e$ is a prescribed orientation of $e$. By letting
$\bv=\{-h^2\nabla \gamma_h;\  h \delta_e\jump{\gamma_h}\bn_e\}$ with
$\delta_e=0$ when $e\subset \Gamma_i,\ i=1,\ldots, m$ and
$\delta_e=1$ otherwise, we see that $\bv\in \bU_h^{0}$. Substituting
this into (\ref{Nov.10}) yields
\begin{equation*}
0=b(\bv,\gamma_h)=h^2\sum_{T\in {\cal T}_h} (\mu\nabla \gamma_h,
\nabla\gamma_h)_T + h\sum_{e\in \E_h^0\cup \Gamma_0}
\|\jump{\gamma_h}\|_e^2.
\end{equation*}
It follows that $\gamma_h=0$, and thus $p_h^{(1)}\equiv p_h^{(2)}$.
This completes the proof of uniqueness.
\end{proof}

\section{Error Equations}

Let $Q_0$ be the $L^2$ projection onto $[P_k(T)]^3, \ T\in {\cal
T}_h$, and $Q_b$ the $L^2$ projection onto $[P_{k}(e)]^3, \ e\in
\partial T\cap \E_h$. Denote by $Q_h$ the $L^2$ projection onto the
weak finite element space $\bV_h$ such that on each element $T\in
{\cal T}_h$,
\begin{equation}\label{EQ:12-31:001}
(Q_h \textbf{u})|_T= \{Q_{0}\textbf{u}, \mathds{Q}_{b} \textbf{u}\},
\end{equation}
where
\begin{equation}\label{EQ:12-31:002}
\mathds{Q}_{b} \textbf{u} = Q_b (\mu\bu\cdot\bn)\bn + Q_b
(\bn\times(\bu\times\bn)).
\end{equation}
Note that $\bn\times(\bu\times\bn)=\bu-(\bu\cdot\bn)\bn$ is the
tangential component of the vector $\bu$ on the boundary of the
element. When $\mu=I$ is the identity matrix, $(\mu\bu\cdot\bn)\bn$
is the normal component of $\bu$. In general, $(\mu\bu\cdot\bn)\bn
+\bn\times(\bu\times\bn)$ is not a decomposition of the vector $\bu$
restricted on $\partial T$.

Denote by ${\cal Q}_h$ and $\textbf{Q}_h$ the $L^2$ projections onto
$P_{k-1}(T)$ and $[P_{k-1}(T)]^{3}$, respectively.

\begin{lemma}\cite{mwyz, wy1302} \label{lemma4.2} The
projection operators $Q_h$, $\textbf{Q}_h$, and ${\cal Q}_h$ satisfy
the following commutative identities:
\begin{align}\label{4.4}
\nabla_w \cdot(\mu Q_h \textbf{v})=&{\cal Q}_h \nabla \cdot
(\mu\textbf{v}), \qquad \textbf{v}\in H({\rm div}_\mu;\Omega),\\
\nabla_w \times (Q_h \textbf{v})=& \textbf{Q}_h (\nabla \times
\textbf{v}), \qquad \textbf{v}\in H({\rm curl};\Omega).\label{4.5}
\end{align}
\end{lemma}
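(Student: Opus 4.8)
The plan is to prove the two commutative identities \eqref{4.4} and \eqref{4.5} by working element by element and exploiting the defining relations \eqref{2.2} and \eqref{2.5} of the discrete weak operators together with the orthogonality properties of the $L^2$ projections. The key structural fact is that $\nabla_w\cdot(\mu Q_h\bv)$ lives in $P_{k-1}(T)$ by definition, and ${\cal Q}_h\nabla\cdot(\mu\bv)$ also lives in $P_{k-1}(T)$, so it suffices to show they pair identically against every test polynomial $\varphi\in P_{k-1}(T)$.

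For \eqref{4.4}, fix $T\in\T_h$ and $\varphi\in P_{k-1}(T)$. Starting from \eqref{2.2} applied to $Q_h\bv=\{Q_0\bv,\mathds{Q}_b\bv\}$, I would write
\begin{equation*}
(\nabla_{w,k-1,T}\cdot(\mu Q_h\bv),\varphi)_T=-(\mu Q_0\bv,\nabla\varphi)_T+\langle \mathds{Q}_b\bv\cdot\bn,\varphi\rangle_{\partial T}.
\end{equation*}
In the first term, since $\nabla\varphi\in[P_{k-1}(T)]^3$ and $Q_0$ is the $L^2$ projection onto $[P_k(T)]^3\supset[P_{k-1}(T)]^3$, I can replace $Q_0\bv$ by $\bv$ (using that $\mu$ is piecewise constant, so $\mu\nabla\varphi$ is still a polynomial of the right degree). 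In the boundary term, I need $\langle\mathds{Q}_b\bv\cdot\bn,\varphi\rangle_{\partial T}=\langle\mu\bv\cdot\bn,\varphi\rangle_{\partial T}$: from \eqref{EQ:12-31:002} and the fact that $Q_b$ is the $L^2$ projection onto $[P_k(e)]^3$ while $\varphi|_e\in P_{k-1}(e)$, the tangential piece $Q_b(\bn\times(\bv\times\bn))$ contributes nothing after dotting with $\bn$, and $Q_b(\mu\bv\cdot\bn)\bn\cdot\bn=Q_b(\mu\bv\cdot\bn)$ pairs with $\varphi$ exactly as $\mu\bv\cdot\bn$ does. Then integration by parts gives $-(\mu\bv,\nabla\varphi)_T+\langle\mu\bv\cdot\bn,\varphi\rangle_{\partial T}=(\nabla\cdot(\mu\bv),\varphi)_T=({\cal Q}_h\nabla\cdot(\mu\bv),\varphi)_T$, which is the claim.

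The argument for \eqref{4.5} is entirely parallel, using \eqref{2.5} in place of \eqref{2.2}: for $\bvarphi\in[P_{k-1}(T)]^3$,
\begin{equation*}
(\nabla_{w,k-1,T}\times(Q_h\bv),\bvarphi)_T=(Q_0\bv,\nabla\times\bvarphi)_T-\langle\mathds{Q}_b\bv\times\bn,\bvarphi\rangle_{\partial T},
\end{equation*}
and one replaces $Q_0\bv$ by $\bv$ since $\nabla\times\bvarphi\in[P_{k-1}(T)]^3$, while the identity \eqref{interteller}, namely $\bv_b\times\bn=\bv\times\bn$ applied to $\mathds{Q}_b\bv$ (together with the commuting of $Q_b$ with the tangential trace on each face, since $\bvarphi|_e\in[P_{k-1}(e)]^3$), turns the boundary term into $\langle\bv\times\bn,\bvarphi\rangle_{\partial T}$; integration by parts then yields $(\nabla\times\bv,\bvarphi)_T=(\bQ_h(\nabla\times\bv),\bvarphi)_T$. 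Since $\bvarphi$ was arbitrary in $[P_{k-1}(T)]^3$ and both sides lie in that space, equality follows, and summing over $T$ gives the global statements.

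The main obstacle — really the only subtle point — is the handling of the boundary terms, i.e., verifying precisely that inserting $\mathds{Q}_b$ does not change the relevant face integrals against degree-$(k-1)$ test functions. This hinges on two things: first, that $Q_b$ projects onto $[P_k(e)]^3$ which contains the test traces of degree $k-1$, so $Q_b$ can be dropped when paired against them; and second, that the two pieces of $\mathds{Q}_b\bv$ in \eqref{EQ:12-31:002} are arranged so that the normal piece survives the $\cdot\bn$ pairing (giving $\mu\bv\cdot\bn$) and is killed by the $\times\bn$ pairing, while the tangential piece does the reverse (giving $\bv\times\bn$ via \eqref{interteller}). I would state these two face-level reductions as a short preliminary observation and then the rest is bookkeeping. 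The piecewise-constancy of $\mu$ and $\kappa$, assumed at the end of Section 5, is what guarantees all the polynomial-degree matchups go through cleanly.
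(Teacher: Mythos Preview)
Your proposal is correct and follows essentially the same approach as the paper's proof: work element by element, expand the weak operator via \eqref{2.2} (resp.\ \eqref{2.5}), use the $L^2$-projection property of $Q_0$ and $Q_b$ against degree-$(k-1)$ polynomial test functions (invoking the piecewise-constancy of $\mu$), identify $(\mathds{Q}_b\bv)\cdot\bn=Q_b(\mu\bv\cdot\bn)$ from \eqref{EQ:12-31:002}, and finish with integration by parts. Your discussion of the boundary-term reductions is in fact more explicit than the paper's, which simply records the chain of equalities without commentary.
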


\begin{proof}
It suffices to verify (\ref{4.4}) on each element $T\in\T_h$. To
this end, using the definition (\ref{2.2}) for the discrete weak
divergence and (\ref{EQ:12-31:002}), we obtain
\begin{eqnarray*}
(\nabla_w \cdot(\mu Q_h \textbf{v}), \varphi)_T &=& - (\mu Q_{0}\bv,
\nabla\varphi)_T + \langle (\mathds{Q}_b\bv)\cdot\bn,
\varphi\rangle_\pT\\
&=& - (\mu Q_{0}\bv, \nabla\varphi)_T + \langle
{Q}_b(\mu\bv\cdot\bn),\varphi\rangle_\pT \\
&=& - (\mu \bv, \nabla\varphi)_T + \langle
\mu\bv\cdot\bn,\varphi\rangle_\pT \\
&=&(\nabla\cdot(\mu\bv), \varphi)_T \\
&=&({\cal Q}_h\nabla\cdot(\mu\bv), \varphi)_T
\end{eqnarray*}
for all $\varphi\in P_{k-1}(T)$. Thus, the identity (\ref{4.4})
holds true. A similar argument can be applied to verify (\ref{4.5}).
\end{proof}

Let $(\bu_h;p_h)=( \{\bu_0, \bu_b\};p_h) \in \bV_h\times W_h$ be the
WG finite element solution arising from Algorithm
\ref{algo1-original}, and $(\bu;p)$ be the solution of the
continuous problem (\ref{EQ:model-problem-I}) or
(\ref{EQ:model-problem-I-new}). The error functions are given by
\begin{align}\label{5.1}
\textbf{e}_h&=\{\textbf{e}_0,\textbf{e}_b\}=\{Q_0\textbf{u}-\textbf{u}_0,
Q_b\textbf{u}-\textbf{u}_b\},\\
 \epsilon_h&={\cal Q}_h p-p_h.\label{ph}
\end{align}

\begin{lemma}\label{lemma5.1}
Assume that $(\bw; \rho)\in  H({\rm curl};\Omega) \times
L^2(\Omega)$ is sufficiently smooth on each element $T\in \T_h$
satisfying
\begin{eqnarray}
&\nabla\times(\kappa\nabla \times \bw) - \mu\nabla \rho =
\eta,\qquad
 \mbox{in}\ \Omega, \label{5.2}\\
&\rho|_{\Gamma_0}=0,\quad \rho|_{\Gamma_i}=const,\ i=1,\ldots, m.
\label{November.28.100}
\end{eqnarray}
Denote by ${\cal Q}_h\rho$ the $L^2$ projection of $\rho$ in the
finite element space $W_h$. Then,
\begin{equation}\label{Div-Curl:Feb9:300-new}
(\kappa\nabla_w \times (Q_h\bw),\nabla_w \times\textbf{v}
)_h+(\nabla_w\cdot (\mu\textbf{v}),{\cal
Q}_h\rho)_h=(\eta,\textbf{v}_0)+l_\bw
(\textbf{v})+\theta_\rho(\textbf{v}),
\end{equation}
for all $\textbf{v}\in \bU_h^{0}$. Here $l_\bw(\textbf{v})$ and
$\theta_\rho(\textbf{v})$ are two functionals in the linear space
$\bV_h$ given by
\begin{align}\label{l}
l_\bw(\textbf{v})&= \sum_{T\in{\cal T}_h}\langle
(\textbf{Q}_h-I)(\kappa\nabla \times\bw),
(\textbf{v}_0-\textbf{v}_b)\times \textbf{n}\rangle_{\partial T},\\
\theta_\rho(\textbf{v})&=\sum_{T\in{\cal T}_h}\langle \rho- {\cal
Q}_h\rho,
 (\mu\textbf{v}_0-\textbf{v}_b)\cdot\bn\rangle_{\partial T}.\label{theta}
\end{align}
\end{lemma}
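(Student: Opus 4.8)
The plan is to establish the identity in (\ref{Div-Curl:Feb9:300-new}) by testing the strong PDE (\ref{5.2}) against the interior component $\bv_0$ of an arbitrary $\bv\in\bU_h^0$, and then converting each volume integral on the left into the weak differential operator form using element-wise integration by parts together with the definitions (\ref{2.2new}) and (\ref{2.5new}) of the discrete weak divergence and weak curl. Concretely, I would first write
\begin{equation*}
(\eta,\bv_0)=\sum_{T\in\T_h}\big(\nabla\times(\kappa\nabla\times\bw),\bv_0\big)_T-\sum_{T\in\T_h}\big(\mu\nabla\rho,\bv_0\big)_T,
\end{equation*}
and integrate by parts on each element. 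For the curl term this yields $\sum_T(\kappa\nabla\times\bw,\nabla\times\bv_0)_T-\sum_T\langle(\kappa\nabla\times\bw)\times\bn,\bv_0\rangle_{\partial T}$; for the divergence term it yields $\sum_T(\rho,\nabla\cdot(\mu\bv_0))_T-\sum_T\langle\rho,\mu\bv_0\cdot\bn\rangle_{\partial T}$, where the sign works out using that $(\mu\nabla\rho,\bv_0)_T=(\nabla\rho,\mu\bv_0)_T$ by symmetry of $\mu$.

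Next I would recognize the two volume terms as the weak operators. By (\ref{2.5new}) applied with $\varphi=\textbf{Q}_h(\kappa\nabla\times\bw)$ on the relevant element (and using the commutativity $\nabla_w\times(Q_h\bw)=\textbf{Q}_h(\nabla\times\bw)$ from Lemma \ref{lemma4.2}, plus the fact that $\kappa$ is piecewise constant so $\kappa\textbf{Q}_h=\textbf{Q}_h\kappa$ on each element), one rewrites $\sum_T(\kappa\nabla\times\bw,\nabla\times\bv_0)_T$ in terms of $(\kappa\nabla_w\times(Q_h\bw),\nabla_w\times\bv)_h$ modulo boundary terms of the form $\langle(\kappa\nabla\times\bw),(\bv_b-\bv_0)\times\bn\rangle_{\partial T}$; the $\textbf{Q}_h$ can be inserted there for free since $\nabla_w\times\bv\in[P_{k-1}(T)]^3$. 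Similarly, using (\ref{2.2new}) with $\varphi={\cal Q}_h\rho$ and the commutativity $\nabla_w\cdot(\mu Q_h\bv)={\cal Q}_h\nabla\cdot(\mu\bv)$ is not directly what I need here — rather I use (\ref{2.2}) directly: $(\nabla_w\cdot(\mu\bv),{\cal Q}_h\rho)_T=-(\mu\bv_0,\nabla({\cal Q}_h\rho))_T+\langle\bv_b\cdot\bn,{\cal Q}_h\rho\rangle_{\partial T}$, and I massage $\sum_T(\rho,\nabla\cdot(\mu\bv_0))_T$ into this by integrating by parts back the other way and replacing $\rho$ by ${\cal Q}_h\rho$ inside volume integrals against $\nabla_w\cdot(\mu\bv)\in P_{k-1}(T)$, paying the price of the boundary term $\langle\rho-{\cal Q}_h\rho,(\mu\bv_0-\bv_b)\cdot\bn\rangle_{\partial T}$, which is exactly $\theta_\rho(\bv)$.

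The main bookkeeping obstacle — and the step I would be most careful about — is the treatment of the \emph{interelement boundary terms} and the boundary $\Gamma$. On each interior face $e=T_i\cap T_j$, the single-valued traces $\bv_b$ cancel when summed over the two elements because $\kappa\nabla\times\bw$ and $\rho$ are single-valued (since $\bw\in H(\mathrm{curl};\Omega)$ and $\rho\in H^1$ away from... actually $\rho$ is only piecewise smooth, but ${\cal Q}_h\rho$ enters with single-valued-on-$e$ pairing through $\bv_b$, so one must track jumps carefully); the terms $\langle\rho,\mu\bv_0\cdot\bn\rangle$ get rearranged into $\langle\rho,(\mu\bv_0-\bv_b)\cdot\bn\rangle+\langle\rho,\bv_b\cdot\bn\rangle$, and the $\bv_b$ piece telescopes using that $\rho$'s normal-flux pairing is consistent. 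On $\Gamma$ one uses $\bv_b\times\bn=0$ (since $\bv\in\bU_h^0\subset\bV_h^0$) to kill the tangential boundary contribution from the curl term, and the boundary condition (\ref{November.28.100}) on $\rho$ together with $\langle\bv_b\cdot\bn_i,1\rangle_{\Gamma_i}=0$ to kill the boundary contribution from the divergence term; this is precisely where the constraint defining $\bU_h^0$ and the hypotheses on $\rho$ are consumed. Once all single-valued boundary pieces are shown to vanish, what remains on each $\partial T$ are exactly $l_\bw(\bv)$ and $\theta_\rho(\bv)$ as defined in (\ref{l})–(\ref{theta}), and collecting everything gives (\ref{Div-Curl:Feb9:300-new}). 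I expect the curl side to be routine and the divergence side — with the $\rho$ versus ${\cal Q}_h\rho$ swap and the jump/telescoping argument on faces — to require the most care.
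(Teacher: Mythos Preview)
Your proposal is correct and follows essentially the same route as the paper's proof: both arguments hinge on the element-wise integration-by-parts identities (\ref{2.5new}) and (\ref{2.2new}), the commutativity (\ref{4.5}), and the cancellation of the $\bv_b$-boundary terms via $\bv_b\times\bn=0$ on $\Gamma$ together with (\ref{November.28.100}) and the constraint $\langle\bv_b\cdot\bn_i,1\rangle_{\Gamma_i}=0$. The only cosmetic difference is direction --- the paper starts from the weak operators $(\kappa\nabla_w\times(Q_h\bw),\nabla_w\times\bv)_T$ and $(\nabla_w\cdot(\mu\bv),{\cal Q}_h\rho)_T$ and works toward $(\eta,\bv_0)$, whereas you start from $(\eta,\bv_0)$ and work toward the weak operators; the intermediate identities and the handling of the face terms are identical.
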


\begin{proof}
It follows from (\ref{2.5new}) with $\varphi= \kappa\nabla_w \times
(Q_h\textbf{w})$ that
\begin{equation}\label{Eq:Feb8:500}\nonumber
\begin{split}
 (\nabla_w \times & \textbf{v},  \kappa\nabla_w \times
(Q_h\textbf{w}))_T = \\
 & \ (\nabla \times \textbf{v}_0,  \kappa\nabla_w \times (Q_h\textbf{w}))
_T -\langle (\textbf{v}_b-\textbf{v}_0)\times\bn,
 \kappa\nabla_w \times (Q_h\textbf{w})\rangle_{\partial T}.
\end{split}
\end{equation}
Using (\ref{4.5}), the above equation can be rewritten as
\begin{equation}\label{5.4}\nonumber
\begin{split}
( \kappa\nabla_w \times & (Q_h\textbf{w}),\nabla_w \times \textbf{v}
)_T = \\
  & ( \kappa\nabla\times \bw, \nabla \times \bv_0)_T+\langle
 \textbf{Q}_h (\kappa\nabla\times \bw), (\bv_0-\bv_b)\times \bn
\rangle_{\partial T}.
\end{split}
\end{equation}
Applying the integration by parts to the first term on the
right-hand side yields
\begin{equation}\label{5.4.001}
\begin{split}
( \kappa\nabla_w \times & (Q_h\textbf{w}),\nabla_w \times\textbf{v}
)_T \\
 = & (\curl(\kappa\curl\bw), \bv_0)_T  - \langle \kappa\curl \bw, \bv_0\times\bn\rangle_\pT\\
 & +\langle \textbf{Q}_h (\kappa\nabla\times \bw), (\bv_0-\bv_b)\times \bn
\rangle_{\partial T}\\
= & (\curl(\kappa\curl\bw), \bv_0)_T - \langle \kappa\curl \bw,
\bv_b\times\bn\rangle_\pT\\
& + \langle (\textbf{Q}_h-I) (\kappa\nabla\times \bw),
(\bv_0-\bv_b)\times \bn \rangle_{\partial T}.
\end{split}
\end{equation}

Using (\ref{2.2new}) with $\varphi={\cal Q}_h \rho$ and the usual
integration by parts, we obtain
\begin{equation}\label{Eq:Feb8:501}
\begin{split}
&\ (\nabla_w\cdot(\mu\bv),{\cal Q}_h \rho)_T\\
  = &\ (\nabla \cdot(\mu\bv_0), {\cal Q}_h \rho)_T+\langle
(\bv_b-\mu\bv_0)\cdot\bn,{\cal Q}_h
\rho\rangle_{\partial T}\\
 =&\ (\nabla \cdot (\mu\bv_0), \rho)_T+\langle
(\bv_b-\mu\bv_0)\cdot\bn,{\cal
Q}_h\rho\rangle_{\partial T}\\
  =&\ -(\mu\bv_0, \nabla \rho)_T+\langle \mu\bv_0\cdot\bn,
\rho\rangle_{\partial T} +\langle (\bv_b-\mu\bv_0)\cdot\bn,{\cal
Q}_h
\rho\rangle_{\partial T}\\
 =&\ -(\bv_0, \mu\nabla \rho)_T+\langle
(\bv_b-\mu\bv_0)\cdot\bn,{\cal Q}_h \rho-\rho\rangle_{\partial T}
+\langle \bv_b\cdot\bn, \rho\rangle_{\partial T}.
\end{split}
\end{equation}

Summing (\ref{5.4.001}) over all the elements $T\in\T_h$ yields
\begin{equation}\label{5.4.002}
\begin{split}
( \kappa\nabla_w \times & (Q_h\textbf{w}),\nabla_w \times \textbf{v}
)_h = \sum_{T\in\T_h} (\curl(\kappa\curl\bw), \bv_0)_T  \\
  & +\sum_{T\in\T_h}\langle
 (\textbf{Q}_h-I) (\kappa\nabla\times \bw), (\bv_0-\bv_b)\times \bn
\rangle_{\partial T},
\end{split}
\end{equation}
where we have used two properties: (1) the cancelation property for
the boundary integrals on interior faces, and (2) the fact that
$\bv_b\times\bn=0$ on $\Gamma$. Similarly, summing
(\ref{Eq:Feb8:501}) over all the elements $T\in\T_h$, we obtain
\begin{equation}\label{5.4.003}
\begin{split}
(\nabla_w\cdot (\mu\textbf{v}),{\cal Q}_h \rho)_h = &
-(\bv_0,\mu\nabla \rho) + \sum_{T\in {\cal T}_h} \langle
(\mu\bv_0-\bv_b)\cdot\bn,
\rho-{\cal Q}_h \rho \rangle_{\partial T}\\
& + \sum_{e\in \E_h\cap \Gamma} \langle \bv_b\cdot\bn,
\rho\rangle_e.
\end{split}
\end{equation}
The third term on the right-hand side of (\ref{5.4.003}) vanishes if
$\bv\in \bU_h^{0}$ and $\rho$ satisfies the boundary condition
(\ref{November.28.100}). Thus, the equation
(\ref{Div-Curl:Feb9:300-new}) holds true by adding up
(\ref{5.4.002}) and (\ref{5.4.003}). This completes the proof of the
lemma.
\end{proof}

\begin{theorem} \label{Thm:div-curl:theorem-error-eqns}
Let $(\bu; p)$ be the solution of the model problem
(\ref{EQ:model-problem-I}) or (\ref{EQ:model-problem-I-new}) and
$(\bu_h; p_h)$ be its numerical solution arising from the WG finite
element scheme (\ref{3.3-original})-(\ref{3.5-original}). Let the
error functions $\textbf{e}_h$ and $\epsilon_h$ be defined by
(\ref{5.1})-(\ref{ph}). Then, $\be_h\in\bU_h^{0}$ and the following
error equations hold true
\begin{align}\label{EQ:div-curl:error-eq-01}
a(\textbf{e}_h,\textbf{v})+b(\textbf{v},\epsilon_h)&=
\varphi_{\textbf{u},p}(\textbf{v}), \qquad \forall \textbf{v}\in \bU_h^{0},\\
b(\textbf{e}_h,q)&=0,\qquad \qquad \ \forall q\in
W_h,\label{EQ:div-curl:error-eq-02}
\end{align}
where
\begin{equation}\label{EQ:div-cul:varphi-up}
\varphi_{\textbf{u},p}(\textbf{v})=l_\textbf{u}(\textbf{v})+
\theta_p(\textbf{v})+ s(Q_h\textbf{u},\textbf{v}).
\end{equation}
\end{theorem}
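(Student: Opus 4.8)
The plan is to derive the error equations directly by subtracting the weak Galerkin scheme \eqref{3.3-original}--\eqref{3.5-original} from a consistency identity that the $L^2$-projection $Q_h\bu$ of the exact solution satisfies. The first thing I would check is that $\be_h\in\bU_h^0$: since $\bu$ solves the model problem, we have $\bu\times\bn=\xi$ on $\Gamma$ and $\langle\mu\bu\cdot\bn_i,1\rangle_{\Gamma_i}=\beta_i$; on the discrete side $\bu_b\times\bn=\bQ_b\xi$ and \eqref{3.5-original} gives $\langle\bu_b\cdot\bn_i,1\rangle_{\Gamma_i}=\beta_i$. Using the identity $\bv_b\times\bn=\bv\times\bn$ from \eqref{interteller} applied to $\mathds{Q}_b\bu$ together with the commuting property of $Q_b$ with the tangential trace, the boundary terms of $\be_h$ cancel, so indeed $\be_b\times\bn=0$ on $\Gamma$ and $\langle\mu(Q_b\bu-\bu_b)\cdot\bn_i,1\rangle_{\Gamma_i}=0$; hence $\be_h\in\bU_h^0$.

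Next I would establish the consistency identity for $Q_h\bu$. The exact solution satisfies the first equation of \eqref{EQ:model-problem-I-new} in the form $(\kappa\curl\bu,\curl\bv)+(\nabla\cdot(\mu\bv),p)+\sum_i\langle\mu\bv\cdot\bn_i,\lambda_i\rangle_{\Gamma_i}=(\bg,\bv_0)$, which upon integrating the first term by parts (legitimate since $\bu$ is smooth enough and $\kappa\curl\bu\in H(\mathrm{curl})$) becomes the strong-form data $\nabla\times(\kappa\curl\bu)-\mu\nabla p=\bg$ with $p$ satisfying the boundary condition in \eqref{November.28.100} — this is precisely the hypothesis of Lemma~\ref{lemma5.1} with $\bw=\bu$, $\rho=p$, $\eta=\bg$. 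Applying Lemma~\ref{lemma5.1} gives, for all $\bv\in\bU_h^0$,
\[
(\kappa\nabla_w\times(Q_h\bu),\nabla_w\times\bv)_h+(\nabla_w\cdot(\mu\bv),\mathcal{Q}_h p)_h=(\bg,\bv_0)+l_\bu(\bv)+\theta_p(\bv).
\]
Adding the stabilizer term $s(Q_h\bu,\bv)$ to both sides yields
\[
a(Q_h\bu,\bv)+b(\bv,\mathcal{Q}_h p)=(\bg,\bv_0)+l_\bu(\bv)+\theta_p(\bv)+s(Q_h\bu,\bv),
\]
using the definitions \eqref{EQ:November20-2014:001}--\eqref{EQ:November20-2014:002} of $a$ and $b$ together with $\epsilon_h=\mathcal{Q}_h p-p_h$ and $\be_h=Q_h\bu-\bu_h$ componentwise. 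Subtracting \eqref{3.3-original} (with the same test function $\bv\in\bU_h^0\subset\bV_h^0$, noting the $\lambda$-term drops because $\langle\bv_b\cdot\bn_i,1\rangle_{\Gamma_i}=0$) gives exactly \eqref{EQ:div-curl:error-eq-01} with $\varphi_{\bu,p}(\bv)=l_\bu(\bv)+\theta_p(\bv)+s(Q_h\bu,\bv)$ as claimed in \eqref{EQ:div-cul:varphi-up}.

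For the second error equation \eqref{EQ:div-curl:error-eq-02}, I would invoke the commuting identity \eqref{4.4} from Lemma~\ref{lemma4.2}: since $\bu\in H(\mathrm{div}_\mu;\Omega)$, one has $\nabla_w\cdot(\mu Q_h\bu)=\mathcal{Q}_h\nabla\cdot(\mu\bu)=\mathcal{Q}_h f$, so for any $q\in W_h$, $b(Q_h\bu,q)=(\mathcal{Q}_h f,q)_h=(f,q)$. Subtracting \eqref{3.4-original} then gives $b(\be_h,q)=0$. The main obstacle — really the only non-routine point — is the justification of the integration by parts that converts the variational first equation of the model problem into the strong form required by Lemma~\ref{lemma5.1}, i.e., checking that the regularity assumed in that lemma ("sufficiently smooth on each element") is consistent with what \eqref{EQ:model-problem-I} actually provides; everything else is bookkeeping with the projection operators and the cancellation of interior-face and boundary terms already handled inside Lemma~\ref{lemma5.1}.
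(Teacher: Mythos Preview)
Your proposal is correct and follows essentially the same route as the paper: derive the strong form $\nabla\times(\kappa\curl\bu)-\mu\nabla p=\bg$ with $p|_{\Gamma_0}=0$, $p|_{\Gamma_i}=-\lambda_i$ from the variational problem, apply Lemma~\ref{lemma5.1} with $(\bw,\rho,\eta)=(\bu,p,\bg)$, add the stabilizer to form $a(\cdot,\cdot)$, and subtract \eqref{3.3-original}; then use the commuting identity \eqref{4.4} and subtract \eqref{3.4-original} for the second equation. Your explicit verification that $\be_h\in\bU_h^{0}$ is a welcome addition that the paper leaves implicit (note only that the constraint in $\bU_h^{0}$ reads $\langle\be_b\cdot\bn_i,1\rangle_{\Gamma_i}=0$, without the extra $\mu$ you wrote, and your parenthetical about the $\lambda$-term is redundant since \eqref{3.3-original} already uses test functions in $\bU_h^{0}$).
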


\begin{proof}
Consider the model problem (\ref{EQ:model-problem-I-new}), as
(\ref{EQ:model-problem-I}) is equivalent to
(\ref{EQ:model-problem-I-new}). Let $(\bu; p; \lambda)$ be the
solution of this model problem. It is not hard to see that the
following holds true:
\begin{equation}\nonumber\label{5.2.Nov-28.200}
\begin{split}
 &\nabla\times( \kappa \nabla \times \bu) - \mu \nabla p  = \bg,\qquad
 \mbox{in}\ \Omega,\\
& p|_{\Gamma_0} = 0,\ p|_{\Gamma_i} =-\lambda_i,\quad i=1,\ldots, m.
\end{split}
\end{equation}
Thus, by Lemma \ref{lemma5.1}, we have
$$
(\kappa\nabla_w\times(Q_h\textbf{u}),\nabla_w\times
\textbf{v})_h+(\nabla_w \cdot (\mu\textbf{v}),{\cal
Q}_hp)_h=(\textbf{g},\textbf{v}_0)
+l_\textbf{u}(\textbf{v})+\theta_p(\textbf{v})
$$
for all $\bv\in \bU_h^{0}$. It follows that
\begin{equation}\label{4.13}
a(Q_h\textbf{u},\textbf{v})+b(\textbf{v},{\cal
Q}_hp)=(\textbf{g},\textbf{v}_0)+l_\textbf{u}(\textbf{v})+
\theta_p(\textbf{v})+s(Q_h\textbf{u},\textbf{v}).
\end{equation}
Subtracting (\ref{3.3-original}) from (\ref{4.13}) gives the first
error equation (\ref{EQ:div-curl:error-eq-01}).

Next, from the second equation in (\ref{EQ:model-problem-I}) and the
commutative relation (\ref{4.4}),
 \begin{equation}\label{4.14}
(f,q)=(\nabla\cdot (\mu\textbf{u}),q)_h=({\cal Q}_h\nabla\cdot
(\mu\textbf{u}),q)_h=(\nabla_w\cdot(\mu Q_h\textbf{u}),q)_h.
\end{equation}
The difference of (\ref{4.14}) and (\ref{3.4-original}) yields the
second error equation (\ref{EQ:div-curl:error-eq-02}). This
completes the proof.
\end{proof}

\section{The {\em inf-sup} Condition}
For any $q\in W_h$, define a finite element function
$\textbf{v}_q\in \bU_h^{0}$ by $\bv_q=\{-h^2\nabla q;\ h
\bv_{q,b}\}$:
\begin{equation}\label{November-30:500}
\bv_{q,b}=\left\{
\begin{array}{ll}
\jump{q}\ \bn_e,&\qquad \mbox{ on } \ e\in\E_h^0,\\
(q-\bar q_i)\ \bn_i,&\qquad \mbox{ on } \ e\in \E_h\cap \Gamma_i, \
i=0,1,\ldots, m.
\end{array}
\right.
\end{equation}
Recall that $\jump{q}$ is the jump of $q$ on the corresponding face
$e\in\E_h^0$, $\bn_e$ is a prescribed orientation of $e$, $\bn_i$ is
the outward normal direction on the connected component $\Gamma_i$,
$\bar q_0=0$, and $\bar q_i$ is the average of $q$ on $\Gamma_i, \
i=1,\ldots, m$.

For any $\bv=\{\bv_0; \bv_b\}\in \bU_h^0$, from the definition of
weak divergence (\ref{2.2}), we have
\begin{equation*}\label{November.30.001}
\begin{split}
b(\bv,q) & = \sum_{T\in {\cal T}_h} (\nabla_w \cdot (\mu\bv),q)_T\\
& = \sum_{T\in {\cal T}_h} -(\mu\bv_0,\nabla q)_T+\langle \bv_b
\cdot \bn, q\rangle_{\partial T}\\
& = - \sum_{T\in {\cal T}_h} (\mu\bv_0,\nabla q)_T +
\sum_{e\in\E_h^0} \langle \bv_b\cdot\bn_e, \jump{q}\rangle_e
+\sum_{i=0}^m \langle \bv_b\cdot\bn_i, q\rangle_{\Gamma_i}.
\end{split}
\end{equation*}
Note that $\langle \bv_b\cdot\bn_i, 1\rangle_{\Gamma_i}=0$ for $
i=1,\ldots,m$. Thus,
\begin{equation}\label{November.30.001-new}
b(\bv,q)=-\sum_{T\in {\cal T}_h} (\mu\bv_0,\nabla q)_T +
\sum_{e\in\E_h^0} \langle \bv_b\cdot\bn_e, \jump{q}\rangle_e
+\sum_{i=0}^m \langle \bv_b\cdot\bn_i, q-\bar q_i\rangle_{\Gamma_i}.
\end{equation}

\begin{lemma} \emph{(inf-sup condition)} \label{lvq2}
For any $q\in W_h$, there exists a finite element function
$\textbf{v}_q\in \bU_h^{0}$ such that
\begin{eqnarray}\label{November:29:801}
b(\bv_q,q) &=& h^2\sum_{T\in {\cal T}_h} (\mu\nabla q,\nabla q)_T +
h\sum_{e\in \E_h^0} \|\jump{q}\|_e^2+h\sum_{i=0}^m\|q-\bar q_i\|_{\Gamma_i}^2, \\
\label{6.5} \3bar\bv_q\3bar &\leqC &\|q\|_{W_h}.
\end{eqnarray}
\end{lemma}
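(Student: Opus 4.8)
The plan is to take the explicit test function $\bv_q = \{-h^2\nabla q;\ h\,\bv_{q,b}\}$ defined in \eqref{November-30:500} and verify the two claims directly. For the identity \eqref{November:29:801}, I would substitute $\bv_0 = -h^2\nabla q$ and $\bv_b = h\,\bv_{q,b}$ into the formula \eqref{November.30.001-new} for $b(\bv,q)$. The first term becomes $-\sum_T(\mu(-h^2\nabla q),\nabla q)_T = h^2\sum_T(\mu\nabla q,\nabla q)_T$. For the second and third terms I use the definition of $\bv_{q,b}$: on an interior face $e\in\E_h^0$, $\bv_b\cdot\bn_e = h\,\jump{q}$, so $\langle\bv_b\cdot\bn_e,\jump{q}\rangle_e = h\|\jump{q}\|_e^2$; on a boundary face $e\subset\Gamma_i$, $\bv_b\cdot\bn_i = h(q-\bar q_i)$, so $\langle\bv_b\cdot\bn_i, q-\bar q_i\rangle_{\Gamma_i} = h\|q-\bar q_i\|_{\Gamma_i}^2$. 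Summing gives exactly the right-hand side of \eqref{November:29:801}. I should also check that $\bv_q$ genuinely belongs to $\bU_h^0$: the interior part is a vector polynomial of degree $\le k-1 \le k$ on each $T$ (so it lies in $[P_k(T)]^3$), the boundary part $h\,\bv_{q,b}$ is a polynomial of degree $\le k-1$ on each face, the boundary data is single-valued on interior faces since $\jump{q}\bn_e$ is orientation-consistent, on $\Gamma_0$ we have $\bv_b\times\bn = 0$ because $\bv_b$ is parallel to $\bn_0$, and on $\Gamma_i$ for $i\ge 1$ we have $\langle\bv_b\cdot\bn_i,1\rangle_{\Gamma_i} = h\langle q-\bar q_i,1\rangle_{\Gamma_i} = 0$ by the definition of the average $\bar q_i$. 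These membership checks are the routine but essential part.

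For the bound \eqref{6.5}, I would estimate $\3bar\bv_q\3bar^2 = a(\bv_q,\bv_q)$ term by term against $\|q\|_{W_h}^2$ as defined in \eqref{Eq:bar-norm-Wh0}. The stabilizer part of $a(\cdot,\cdot)$ contributes $\sum_T h_T^{-1}\|\mu\bv_0\cdot\bn - \bv_b\cdot\bn\|_{\partial T}^2 + \sum_T h_T^{-1}\|\bv_0\times\bn - \bv_b\times\bn\|_{\partial T}^2$. Here $\bv_0\times\bn = -h^2(\nabla q)\times\bn$ and $\bv_b\times\bn$ is either $0$ (boundary) or would need the trace inequality; using the inverse/trace estimate $\|\nabla q\|_{\partial T}^2 \lesssim h_T^{-1}\|\nabla q\|_T^2$ together with $h_T \sim h$ one controls $h^{-1}\|h^2\nabla q\times\bn\|_{\partial T}^2 \lesssim h^3\|\nabla q\|_{\partial T}^2 \lesssim h^2\|\nabla q\|_T^2$, and similarly $h^{-1}\|h\bv_{q,b}\|_{\partial T}^2 \lesssim h\|\jump{q}\|_e^2$ or $h\|q-\bar q_i\|_{\Gamma_i}^2$, matching the terms of $\|q\|_{W_h}^2$. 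The weak curl term $(\kappa\nabla_w\times\bv_q,\nabla_w\times\bv_q)_h$ is estimated by bounding $\|\nabla_w\times\bv_q\|_T$ via its defining relation \eqref{2.5}: testing against polynomials $\varphi\in[P_{k-1}(T)]^3$ and using Cauchy–Schwarz plus inverse inequalities, $\|\nabla_w\times\bv_q\|_T \lesssim \|\nabla\times\bv_0\|_T + h_T^{-1/2}\|(\bv_0-\bv_b)\times\bn\|_{\partial T}$, and both pieces are already controlled by the previous estimates since $\nabla\times(-h^2\nabla q) = 0$, so this term vanishes except for the boundary contribution.

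The main obstacle I anticipate is bookkeeping the trace and inverse inequalities cleanly so that the factors of $h$ versus $h_T$ match up under the shape-regularity assumption (which gives $h_T \sim h$ uniformly up to a mesh-independent constant), and making sure the boundary-face contributions on $\Gamma_i$ are handled with the correct average-subtracted quantities rather than raw traces of $q$ — the space $W_h$ consists of fully discontinuous piecewise polynomials with no boundary conditions, so only the jumps and the deviations $q - \bar q_i$ are controlled, and the construction of $\bv_{q,b}$ is precisely engineered to pick these up. Once those estimates are assembled, $\3bar\bv_q\3bar^2 \lesssim \|q\|_{W_h}^2$ follows by summing, which is \eqref{6.5}.
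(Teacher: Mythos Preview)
Your proposal is correct and follows essentially the same route as the paper: the same explicit $\bv_q$, the same membership check, and the same substitution into \eqref{November.30.001-new} for the identity. For the bound \eqref{6.5} the paper organizes things slightly more cleanly by splitting $\bv_q = \bv_q^{(1)} + \bv_q^{(2)}$ with $\bv_q^{(1)} = \{-h^2\nabla q;\,0\}$ and $\bv_q^{(2)} = \{0;\,h\bv_{q,b}\}$; the point you nearly have but do not quite state is that $\bv_{q,b}$ is parallel to the face normal on \emph{every} face (interior as well as boundary), so $\bv_{q,b}\times\bn \equiv 0$ and hence $\nabla_w\times\bv_q^{(2)} = 0$ exactly, leaving only the $\bv_q^{(1)}$ contribution to bound via the inverse inequality.
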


\begin{proof} For any $q\in W_h$, define $\bv_{q,b}$ by (\ref{November-30:500}) and set
$\textbf{v}_q = \{-h^2\nabla q;\ h \bv_{q,b}\}$. On the boundary
$\Gamma$, the vector $\bv_{q,b}$ is parallel to the normal director
$\bn$. Thus, we have $\bv_{q,b}\times\bn=0$ on $\Gamma$. Moreover,
on each connected component $\Gamma_i$, we have
$$
\langle \bv_{q,b}\cdot\bn_i, 1\rangle_{\Gamma_i} = \int_{\Gamma_i}
(q-\bar q_i) =0,\ i=1, 2,\ldots, m.
$$
Thus, $\bv_q\in\bU_h^{0}$.

Now, by taking $\bv=\bv_q$ in (\ref{November.30.001-new}), we obtain
\begin{equation*}
b(\bv_q,q)=h^2\sum_{T\in {\cal T}_h} (\mu\nabla q,\nabla q)_T +
h\sum_{e\in \E_h^0} \|\jump{q}\|_e^2+h\sum_{i=0}^m\|q-\bar
q_i\|_{\Gamma_i}^2,
\end{equation*}
which verifies the identity (\ref{November:29:801}).

To derive (\ref{6.5}), we consider the following decomposition
$$
\bv_q = \bv_q^{(1)} + \bv_q^{(2)},
$$
where $\bv_q^{(1)} = -\{h^2\nabla q;\ 0\}$ and $\bv_q^{(2)} = \{0;\
h\bv_{q,b}\}$. It suffices to establish (\ref{6.5}) for
$\bv_q^{(1)}$ and $\bv_q^{(2)}$ independently.

From the semi-norm definition (\ref{Eq:bar-norm}), we have
\begin{equation}\label{vq2}
\begin{split}
\3bar \textbf{v}_q^{(1)} \3bar^2 = & \sum_{T\in {\cal T}_h}
(\kappa\nabla_w \times \textbf{v}_q^{(1)}, \nabla_w \times
\textbf{v}_q^{(1)})_T\\
& + h_T^{-1}\|h^2 \mu\nabla q\cdot\bn \|^2_{\partial T} +
h_T^{-1}\|h^2\nabla q \times\bn\|^2_\pT.
\end{split}
\end{equation}
The definition (\ref{2.5}) for the discrete weak curl implies
$$
(\nabla_w \times \textbf{v}_q^{(1)}, \varphi)_T = -h^2 (\nabla q,
\curl\varphi)_T,\qquad \forall \ \varphi\in [P_{k-1}(T)]^3.
$$
It follows from the inverse inequality that
$$
\|\nabla_w \times \textbf{v}_q^{(1)}\|_T \leqC h \|\nabla q\|_T.
$$
Substituting the above into (\ref{vq2}) and then using the trace
inequality (\ref{Aa-trace}) yields
$$
\3bar \textbf{v}_q^{(1)} \3bar^2 \leqC h^2\|\nabla q\|_T^2,
$$
which verifies the estimate (\ref{6.5}) for $\textbf{v}_q^{(1)}$.

For $\textbf{v}_q^{(2)}$, we again use the semi-norm definition
(\ref{Eq:bar-norm}) to obtain
\begin{equation}\label{vq2-new}
\3bar \textbf{v}_q^{(2)} \3bar^2= \sum_{T\in {\cal T}_h}
(\kappa\nabla_w \times \textbf{v}_q^{(2)}, \nabla_w \times
\textbf{v}_q^{(2)})_T + h_T^{-1}\|h \bv_{q,b}\cdot\bn \|^2_{\partial
T} + h_T^{-1}\|h \bv_{q,b}\times\bn \|^2_{\pT}.
\end{equation}
Since $\bv_{q,b}$ is parallel to $\bn$, then $\bv_{q,b}\times\bn=0$
on $\pT$. In addition, the definition (\ref{2.5}) for the discrete
weak curl implies $\nabla_w \times \textbf{v}_q^{(2)}=0$ as
$$
(\nabla_w \times \textbf{v}_q^{(2)}, \varphi)_T = (0,
\curl\varphi)_T- h \langle \bv_{q,b}\times\bn,
\varphi\rangle_{\pT}=0 ,\quad \forall \ \varphi\in [P_{k-1}(T)]^3.
$$
Thus, it follows from (\ref{vq2-new}) and (\ref{November-30:500})
that
$$
\3bar \textbf{v}_q^{(2)} \3bar^2 \leqC h \left( \sum_{e\in \E_h^0}
\|\jump{q}\|_e^2 + \sum_{i=0}^m\|q-\bar q_i\|_{\Gamma_i}^2\right),
$$
which verifies the estimate (\ref{6.5}) for $\textbf{v}_q^{(2)}$.
This completes the proof of the lemma.
\end{proof}

\section{Error Analysis}
Based on the error equations shown as in Theorem
\ref{Thm:div-curl:theorem-error-eqns} and the {\em inf-sup}
condition in the previous section , we shall derive an estimate for
the error terms $\be_h$ and $\epsilon_h$ taken as the difference of
the WG finite element solution and the $L^2$ projection of the exact
solution.

\subsection{Some technical inequalities}

Assume that the finite element partition ${\cal T}_h$ of $\Omega$ is
shape regular as defined in \cite{wy1202}.  Let $T\in {\cal T}_h$ be
an element with $e$ as a face. The trace inequality holds true:
\begin{equation}\label{A4}
\|\psi\|_e^2\leqC\big(h_T^{-1}\|\psi\|_T^2+h_T\|\nabla
\psi\|_T^2\big),\qquad \forall \ \psi\in H^1(T).
\end{equation}
If $\phi$ is a polynomial, the inverse inequality holds true:
\begin{equation}\label{Div-Curl:inverse}
\|\nabla \phi\|_T \leqC h_T^{-1} \|\phi\|_T.
\end{equation}
From (\ref{A4}) and (\ref{Div-Curl:inverse}), we have
 \begin{equation}\label{Aa-trace}
\|\phi\|_e^2\leqC h_T^{-1}\|\phi\|_T^2.
\end{equation}

\begin{lemma}\label{lemmaA1}  \cite{wy1202} Let $k\ge 1$ be the order of the WG finite elements, and $1 \leq r \leq k$.
Let $\textbf{w}\in [H^{r+1}(\Omega)]^3$,  $\rho \in H^r (\Omega)$,
and $0 \leq m \leq 1$. There holds
\begin{align}\label{A1}
\sum_{T\in{\cal
T}_h}h_T^{2m}\|\textbf{w}-Q_0\textbf{w}\|^2_{T,m}&\leqC
h^{2(r+1)}\|\textbf{w}\|^2_{r+1},\\
\sum_{T\in{\cal T}_h}h_T^{2m}\|\nabla\times
\textbf{w}-\textbf{Q}_h(\nabla\times\textbf{w})\|^2_{T,m}&\leqC
h^{2r}\|\textbf{w}\|^2_{r+1},\label{A2}\\
\sum_{T\in{\cal T}_h}h_T^{2m}\|\rho-{\cal Q}_h\rho\|^2_{T,m}&\leqC
h^{2r}\|\rho\|^2_{r}.\label{A3}
\end{align}
\end{lemma}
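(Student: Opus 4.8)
The statement to prove is Lemma~\ref{lemmaA1}, which collects three standard approximation estimates for the $L^2$ projections $Q_0$, $\textbf{Q}_h$, and ${\cal Q}_h$ on a shape-regular finite element partition. These are entirely classical finite element approximation results, so the plan is to reduce each inequality to the local Bramble--Hilbert lemma on a reference element combined with a scaling argument.

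First I would recall that on any shape-regular element $T$ with diameter $h_T$, the $L^2$ projection $P_T$ onto $P_k(T)$ satisfies the local estimate $\|v - P_T v\|_{m,T} \lesssim h_T^{r+1-m}\|v\|_{r+1,T}$ for $0 \le m \le r+1 \le k+1$; this follows by passing to a reference element $\hat T$ via an affine map, invoking the Bramble--Hilbert lemma (since $P_T$ preserves polynomials of degree $\le k$), and scaling back, with all constants controlled by the shape-regularity parameter. Applying this componentwise to $\textbf{w}\in[H^{r+1}(\Omega)]^3$ with $P_T = Q_0$ gives $\|\textbf{w}-Q_0\textbf{w}\|_{m,T}\lesssim h_T^{r+1-m}\|\textbf{w}\|_{r+1,T}$; multiplying by $h_T^{2m}$, squaring, summing over $T\in\T_h$, and using $h_T\le h$ yields \eqref{A1}.

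For \eqref{A2}, the key observation is that $\nabla\times\textbf{w}\in[H^{r}(\Omega)]^3$ when $\textbf{w}\in[H^{r+1}(\Omega)]^3$, and $\textbf{Q}_h$ is the $L^2$ projection onto $[P_{k-1}(T)]^3$, which preserves polynomials of degree $\le k-1 \ge r-1$. Hence the same Bramble--Hilbert/scaling argument gives $\|\nabla\times\textbf{w}-\textbf{Q}_h(\nabla\times\textbf{w})\|_{m,T}\lesssim h_T^{r-m}\|\nabla\times\textbf{w}\|_{r,T}\lesssim h_T^{r-m}\|\textbf{w}\|_{r+1,T}$; squaring after multiplication by $h_T^{2m}$ and summing gives \eqref{A2}. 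Estimate \eqref{A3} is identical in spirit: ${\cal Q}_h$ is the $L^2$ projection onto $P_{k-1}(T)$, $\rho\in H^r(\Omega)$ with $r\le k$, so the local error is $\|\rho-{\cal Q}_h\rho\|_{m,T}\lesssim h_T^{r-m}\|\rho\|_{r,T}$, and summation completes the bound.

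There is no genuine obstacle here; the only point requiring care is bookkeeping of the polynomial degrees versus the regularity index—one must check that $k-1 \ge r-1$ (equivalently $r\le k$, which is exactly the hypothesis) so that the projections $\textbf{Q}_h$ and ${\cal Q}_h$ indeed reproduce polynomials of the degree needed for Bramble--Hilbert, and that $m\le 1\le r$ so the seminorms on the right-hand side are well defined. The shape-regularity assumption from \cite{wy1202} guarantees the affine maps to the reference element have uniformly bounded Jacobians and inverses, so all hidden constants are mesh-independent, which is precisely what the ``$\lesssim$'' notation records. Since this is a well-known result, one may alternatively simply cite \cite{wy1202} and \cite{ciarlet} and omit the details.
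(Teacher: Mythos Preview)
Your proposal is correct; in fact it supplies considerably more detail than the paper does, since the paper gives no proof at all and simply attributes the lemma to \cite{wy1202}. The one caveat is that the partitions here are general polyhedral, so the phrase ``passing to a reference element $\hat T$ via an affine map'' is not literally available---there is no single reference polyhedron. The shape-regularity notion in \cite{wy1202} instead works through a simplicial sub-partition of each polyhedron, and the Bramble--Hilbert argument is applied on those simplices (or via an averaged Taylor polynomial on a star-shaped domain), with the shape-regularity parameters controlling the constants. This does not affect the substance of your argument, only the mechanism by which the local estimate $\|v-P_Tv\|_{m,T}\lesssim h_T^{s-m}\|v\|_{s,T}$ is obtained; your final suggestion of simply citing \cite{wy1202} and \cite{ciarlet} is exactly what the paper does.
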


In the WG finite element space $\bV_h$, we introduce a semi-norm as
follows
\begin{equation}\label{Norm-v1h}
|\bv|_{1,h} = \left(\sum_{T\in\T_h}
h_T^{-1}\|(\bv_0-\bv_b)\times\bn\|_{\pT}^2 +
h_T^{-1}\|(\mu\bv_0-\bv_b)\cdot\bn\|_{\pT}^2\right)^{\frac12}.
\end{equation}

\begin{lemma}\label{Lemma:Div-Curl:phi-estimate}
Assume that the finite element partition ${\cal T}_h$ of $\Omega$ is
shape regular as defined in \cite{wy1202} and $1\leq r \leq k$. Let
$\bw \in  [H^{r+1} (\Omega)]^3$ and $\rho \in H^r (\Omega)$. Then,
we have
\begin{align}\label{A7}
|s(Q_h\bw,\bv)| &\leqC h^r\|\bw\|_{r+1} \ |\bv|_{1,h},\\
|l_\bw(\bv)| & \leqC h^r\|\bw\|_{r+1}\ |\bv|_{1,h},\label{A8}\\
|\theta_\rho(\bv)| & \leqC h^r\|\rho\|_r\ |\bv|_{1,h},\label{A9}
\end{align}
for any $\bv \in \bV_h$. Here $l_\bw(\cdot)$ and
$\theta_\rho(\cdot)$ are defined in (\ref{l}) and (\ref{theta}).
\end{lemma}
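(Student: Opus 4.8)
The plan is to estimate each of the three functionals by bounding the boundary terms that define them, using Cauchy--Schwarz on each element face followed by the trace/inverse inequalities and the approximation estimates of Lemma~\ref{lemmaA1}. All three quantities have the same structure: a sum over elements $T\in\T_h$ of an $L^2(\partial T)$-inner product in which one factor is an approximation-theory residual (namely $(Q_h-I)(\kappa\curl\bw)$, $(\textbf{Q}_h-I)(\kappa\curl\bw)$, or $\rho-{\cal Q}_h\rho$) and the other factor is one of the jump-type quantities $(\bv_0-\bv_b)\times\bn$ or $(\mu\bv_0-\bv_b)\cdot\bn$ that already appears, scaled by $h_T^{-1/2}$, inside the seminorm $|\bv|_{1,h}$ defined in (\ref{Norm-v1h}).

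Concretely, for (\ref{A9}) I would write
\begin{equation*}
|\theta_\rho(\bv)| \le \sum_{T\in\T_h}
\|\rho-{\cal Q}_h\rho\|_{\partial T}\,
\|(\mu\bv_0-\bv_b)\cdot\bn\|_{\partial T}
= \sum_{T\in\T_h}
\bigl(h_T^{1/2}\|\rho-{\cal Q}_h\rho\|_{\partial T}\bigr)
\bigl(h_T^{-1/2}\|(\mu\bv_0-\bv_b)\cdot\bn\|_{\partial T}\bigr),
\end{equation*}
then apply the discrete Cauchy--Schwarz inequality in the index $T$; the second factor sums to $|\bv|_{1,h}$ by (\ref{Norm-v1h}). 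For the first factor, the trace inequality (\ref{A4}) gives
$h_T\|\rho-{\cal Q}_h\rho\|_{\partial T}^2 \lesssim \|\rho-{\cal Q}_h\rho\|_T^2 + h_T^2\|\nabla(\rho-{\cal Q}_h\rho)\|_T^2$, and summing over $T$ and invoking (\ref{A3}) with $m=0$ and $m=1$ bounds this by $h^{2r}\|\rho\|_r^2$. This yields (\ref{A9}). The estimate (\ref{A8}) for $l_\bw(\bv)$ is identical in form, with $\rho-{\cal Q}_h\rho$ replaced by $(\textbf{Q}_h-I)(\kappa\curl\bw)$ and $(\mu\bv_0-\bv_b)\cdot\bn$ replaced by $(\bv_0-\bv_b)\times\bn$; here one uses (\ref{A2}) (remembering $\kappa$ is piecewise constant, so $\textbf{Q}_h$ commutes with multiplication by $\kappa$ up to absorbing constants into $\lesssim$) in place of (\ref{A3}).

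For the stabilizer term (\ref{A7}), recall from (\ref{EQ:StabilityTerm}) that $s(Q_h\bw,\bv)$ is a sum over $T$ of $h_T^{-1}\langle(\mu Q_0\bw-\mathds{Q}_b\bw)\cdot\bn,(\mu\bv_0-\bv_b)\cdot\bn\rangle_{\partial T}$ plus the analogous tangential term. The second factor in each pairing again assembles into $|\bv|_{1,h}$ after extracting $h_T^{-1/2}$. For the first factor I would exploit the projection definitions: on $\partial T$, $\mathds{Q}_b\bw = Q_b(\mu\bw\cdot\bn)\bn + Q_b(\bn\times(\bw\times\bn))$, so $(\mu Q_0\bw - \mathds{Q}_b\bw)\cdot\bn = \mu(Q_0\bw-\bw)\cdot\bn + (I-Q_b)(\mu\bw\cdot\bn)$ (the last using $\mu$ piecewise constant), and similarly $(Q_0\bw-\mathds{Q}_b\bw)\times\bn = (Q_0\bw-\bw)\times\bn + (I-Q_b)(\bn\times(\bw\times\bn))\times\bn$. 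Each piece is controlled by $h_T^{-1/2}$ times an $L^2(\partial T)$ projection residual; the trace inequality (\ref{A4}) converts $h_T^{-1}\|Q_0\bw-\bw\|_{\partial T}^2$ and the $Q_b$-residual terms into $h_T^{-2}\|Q_0\bw-\bw\|_T^2 + \|\nabla(Q_0\bw-\bw)\|_T^2$ (for the $Q_b$ pieces one uses the optimality of $Q_b$ against the trace of $\bw$ itself and standard approximation on the face), and summing over $T$ with (\ref{A1}) (for $m=0,1$, $r$ replaced appropriately) gives the bound $h^r\|\bw\|_{r+1}$.

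The routine part is the bookkeeping of powers of $h_T$ and the shape-regularity-dependent constants; the only genuine subtlety—what I'd flag as the main obstacle—is handling the boundary-projection residuals $(I-Q_b)$ acting on traces in (\ref{A7}): unlike $Q_0$, the operator $Q_b$ lives on faces, so to get an $h^r\|\bw\|_{r+1}$ bound one must combine the face approximation estimate $\|(I-Q_b)g\|_e \lesssim h_e^{1/2}|g|_{1/2,e}$-type bounds (or, more practically, insert $Q_0\bw$, use $\|(I-Q_b)(\mu(Q_0\bw)\cdot\bn)\|_e$ trivially zero up to polynomial degree and reduce to $(I-Q_b)$ applied to $\mu(\bw-Q_0\bw)\cdot\bn$, then trace-bound that) with the bulk estimates (\ref{A1})--(\ref{A3}); keeping the degrees consistent (the $k$ versus $k-1$ distinction for $Q_b$ acting on normal/tangential components) is where care is needed. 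Once all three first-factor sums are bounded by $h^r$ times the appropriate Sobolev norm, assembling gives (\ref{A7})--(\ref{A9}).
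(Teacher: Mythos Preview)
Your argument for (\ref{A8}) and (\ref{A9}) is exactly the paper's: Cauchy--Schwarz on each $\partial T$, insert $h_T^{\pm 1/2}$, discrete Cauchy--Schwarz in $T$, then the trace inequality (\ref{A4}) together with (\ref{A2}) or (\ref{A3}). For (\ref{A7}) your proof is correct but more involved than necessary. The paper also splits $s=s_1+s_2$, but then notes that since $(\bv_0-\bv_b)\times\bn$ and $(\mu\bv_0-\bv_b)\cdot\bn$ lie in $[P_k(e)]^3$ on each face and $Q_b$ is the $L^2$-projection onto that space, one has
\[
\langle Q_b(\bw\times\bn),\,(\bv_0-\bv_b)\times\bn\rangle_{\partial T}
=\langle \bw\times\bn,\,(\bv_0-\bv_b)\times\bn\rangle_{\partial T},
\]
and similarly for the normal component. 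Thus the $(I-Q_b)$ residual you isolate and flag as the ``main obstacle'' vanishes outright in the pairing, leaving only $\|Q_0\bw-\bw\|_{\partial T}$, which is handled directly by (\ref{A4}) and (\ref{A1}). Your workaround (insert $Q_0\bw$, use $(I-Q_b)(\mu Q_0\bw\cdot\bn)=0$, then bound $(I-Q_b)$ applied to $\mu(\bw-Q_0\bw)\cdot\bn$ by the $L^2$-contraction property of $Q_b$ and the trace inequality) is valid and yields the same estimate, but the orthogonality observation makes it a one-line step.
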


\begin{proof}
Recall from (\ref{EQ:StabilityTerm}) that the stability term can be
decomposed into two parts:
$$
s(\bv,\bw) = s_1(\bv,\bw) + s_2(\bv, \bw),
$$
where
\begin{eqnarray}\label{EQ:StabilityTermOne}
s_1(\textbf{v},\textbf{w}) &=& \sum_{T\in {\cal T}_h}h_T^{-1}\langle
 (\textbf{v}_0-\textbf{v}_b)\times\bn,
(\textbf{w}_0-\textbf{w}_b)\times\bn\rangle_{\partial T},\\
s_2(\textbf{v},\textbf{w}) &=& \sum_{T\in {\cal T}_h}h_T^{-1}\langle
 (\mu\textbf{v}_0-\textbf{v}_b)\cdot\bn,
(\mu\textbf{w}_0-\textbf{w}_b)\cdot\bn\rangle_{\partial
T}.\label{EQ:StabilityTermTwo}
\end{eqnarray}

To prove (\ref{A7}), it suffices to derive the estimate (\ref{A7})
for $s_1(\cdot,\cdot)$ and $s_2(\cdot, \cdot)$ separately. To this
end, we use the Cauchy-Schwarz inequality, the trace
inequality~(\ref{A4}) and the estimate (\ref{A1}) to obtain
\begin{eqnarray*}
|s_1(Q_h\textbf{w},\textbf{v})| &=&\Big|\sum_{T\in {\cal
T}_h}h_T^{-1}\langle Q_0\textbf{w}\times
\bn-Q_b(\textbf{w}\times\bn), (\textbf{v}_0-
\textbf{v}_b)\times\bn\rangle_{\partial
T}\Big|\\
&=&\Big|\sum_{T\in {\cal T}_h}h_T^{-1}\langle
Q_0\textbf{w}\times\bn-\textbf{w}\times\bn,(\textbf{v}_0-
\textbf{v}_b)\times\bn\rangle_{\partial
T}\Big| \\
&\leq & \Big(\sum_{T\in {\cal T}_h}h_T^{-1}\|Q_0\textbf{w}-
\textbf{w}\|_{\partial T}^2\Big)^{\frac{1}{2}}\Big(\sum_{T\in {\cal
T}_h}h_T^{-1}\|(\textbf{v}_0- \textbf{v}_b)\times\bn\|_{\partial
T}^2\Big)^{\frac{1}{2}}\\
&\leqC &\Big(\sum_{T\in {\cal T}_h}h_T^{-2}\|Q_0\textbf{w}-
\textbf{w}\|_{T}^2+  |Q_0\textbf{w}- \textbf{w}
|_{1,T}^2\Big)^{\frac{1}{2}} \ |\textbf{v}|_{1,h}\\
&\leqC & h^r\|\textbf{w}\|_{r+1}\ |\textbf{v}|_{1,h}.
\end{eqnarray*}
To derive (\ref{A7}) for $s_2(\cdot, \cdot)$, we again use the
Cauchy-Schwarz inequality, the trace inequality~(\ref{A4}) and the
estimate (\ref{A1}) to obtain
\begin{eqnarray*}
|s_2(Q_h\textbf{w},\textbf{v})| &=&\Big|\sum_{T\in {\cal
T}_h}h_T^{-1}\langle \mu Q_0\textbf{w}\cdot
\bn-Q_b(\mu\textbf{w}\cdot\bn), (\mu\textbf{v}_0-
\textbf{v}_b)\cdot\bn\rangle_{\partial
T}\Big|\\
&=&\Big|\sum_{T\in {\cal T}_h}h_T^{-1}\langle
Q_0(\mu\textbf{w})\cdot\bn-\mu\textbf{w}\cdot\bn,(\mu\textbf{v}_0-
\textbf{v}_b)\cdot\bn\rangle_{\partial
T}\Big| \\
&\leq & \Big(\sum_{T\in {\cal T}_h}h_T^{-1}\|\mu(Q_0\textbf{w}-
\textbf{w})\|_{\partial T}^2\Big)^{\frac{1}{2}}\Big(\sum_{T\in {\cal
T}_h}h_T^{-1}\|(\mu\textbf{v}_0- \textbf{v}_b)\cdot\bn\|_{\partial
T}^2\Big)^{\frac{1}{2}}\\
&\leqC &\Big(\sum_{T\in {\cal T}_h}h_T^{-2}\|Q_0\textbf{w}-
\textbf{w}\|_{T}^2+  |Q_0\textbf{w}- \textbf{w}
|_{1,T}^2\Big)^{\frac{1}{2}} \ |\textbf{v}|_{1,h}\\
&\leqC & h^r\|\textbf{w}\|_{r+1}\ |\textbf{v}|_{1,h}.
\end{eqnarray*}

As to (\ref{A8}), we use the Cauchy-Schwarz inequality, the trace
inequality~(\ref{A4}) and the estimate (\ref{A2}) to obtain
\begin{equation*}
\begin{split}
|l_\textbf{w}(\textbf{v})|=&\left| \sum_{T\in{\cal T}_h}\langle
(\textbf{Q}_h-I)(\kappa\nabla \times\textbf{w}),
(\textbf{v}_0-\textbf{v}_b)\times \textbf{n})\rangle_{\partial
T}\right|\\
\leqC &\Big(\sum_{T\in{\cal T}_h}h_T \| (\textbf{Q}_h-I)(\nabla
\times\textbf{w}) \|^2_{\partial T}\Big)^{
\frac{1}{2}}\Big(\sum_{T\in{\cal T}_h}h_T^{-1}\|
 (\textbf{v}_0-\textbf{v}_b)\times\bn\|^2_{\partial T}\Big)^{ \frac{1}{2}}\\
\leqC & h^r\|\textbf{w}\|_{r+1}\ |\textbf{v}|_{1,h}.
\end{split}
\end{equation*}

Finally, we use the Cauchy-Schwarz inequality, the trace
inequality~(\ref{A4}) and the estimate (\ref{A3}) to obtain
\begin{equation*}
\begin{split}
|\theta_\rho(\textbf{v})|&=\left|\sum_{T\in{\cal T}_h}\langle \rho-
{\cal Q}_h \rho,
 (\mu\textbf{v}_0-\textbf{v}_b)\cdot\textbf{n}\rangle_{\partial T}\right|\\
&\leq   \Big(\sum_{T\in{\cal T}_h}h_T \| \rho- {\cal Q}_h
\rho\|^2_{\partial T}\Big)^{ \frac{1}{2}}\Big(\sum_{T\in{\cal
T}_h}h_T^{-1}\|
 (\mu\textbf{v}_0-\textbf{v}_b)\cdot\bn\|^2_{\partial T}\Big)^{ \frac{1}{2}}\\
 &\leqC h^r\| \rho\|_{r}\ |\textbf{v}|_{1,h}.
\end{split}
\end{equation*}
This completes the proof of the lemma.
\end{proof}

\subsection{Error estimates}

Recall that $\3bar\cdot\3bar_1$ defines a norm in the finite element
space $\bU_h^{0}$. This norm can be regarded as a discrete $H_0({\rm
curl})\cap H({\rm div}_\mu)$-norm under which the error function
$\be_h$ shall be measured.

\begin{theorem}\label{Thm:div-curl:theorem-error-estimate}
Assume that $k \geq 1$ be the order of the WG finite element scheme
(\ref{3.3-original})-(\ref{3.5-original}). Let $(\bu; p)\in
[H^{k+1}(\Omega)]^3\times H^k(\Omega)$ be the solution of the model
problem (\ref{EQ:model-problem-I}) and $(\bu_h; p_h)$ be the WG
finite element solution arising from
(\ref{3.3-original})-(\ref{3.5-original}). Then, we have
\begin{equation}\label{th1}
\3bar Q_h\textbf{u}-\textbf{u}_h\3bar_1+\|{\cal
Q}_hp-p_h\|_{W_h}\leqC h^k(\|\bu\|_{k+1}+\|p\|_k).
\end{equation}
\end{theorem}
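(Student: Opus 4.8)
The plan is to combine the two error equations of Theorem~\ref{Thm:div-curl:theorem-error-eqns} with the \emph{inf-sup} condition of Lemma~\ref{lvq2} in the standard saddle-point fashion, and to bound the right-hand side functional $\varphi_{\bu,p}(\bv)$ using the approximation estimates of Lemma~\ref{Lemma:Div-Curl:phi-estimate}. First I would estimate $\3bar\be_h\3bar_1$. Since $\be_h\in\bU_h^{0}$, the second error equation~(\ref{EQ:div-curl:error-eq-02}) with $q=\epsilon_h\in W_h$ gives $b(\be_h,\epsilon_h)=0$, so taking $\bv=\be_h$ in~(\ref{EQ:div-curl:error-eq-01}) yields $a(\be_h,\be_h)=\varphi_{\bu,p}(\be_h)$, i.e.\ $\3bar\be_h\3bar^2=l_\bu(\be_h)+\theta_p(\be_h)+s(Q_h\bu,\be_h)$. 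By Lemma~\ref{Lemma:Div-Curl:phi-estimate} (applied with $\bw=\bu$, $\rho=p$, $r=k$) each of the three terms is bounded by $h^k(\|\bu\|_{k+1}+\|p\|_k)\,|\be_h|_{1,h}$; and $|\be_h|_{1,h}$ is exactly the part of $\3bar\be_h\3bar_1$ coming from the two jump terms, hence $|\be_h|_{1,h}\le\3bar\be_h\3bar_1$, while $\3bar\be_h\3bar$ and $\3bar\be_h\3bar_1$ are comparable up to the coercivity of $\kappa$ (and the fact that $a(\be_h,\be_h)$ controls $\|\nabla_w\times\be_h\|_T^2$ plus the stabilizer, while $\3bar\cdot\3bar_1^2$ also contains $\|\nabla_w\cdot(\mu\cdot)\|_T^2$, which vanishes here by~(\ref{EQ:div-curl:error-eq-02}) tested against $\mathcal Q_h$-images). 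Thus $\3bar\be_h\3bar_1\leqC h^k(\|\bu\|_{k+1}+\|p\|_k)$.

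Next I would estimate $\|\epsilon_h\|_{W_h}$ via the \emph{inf-sup} condition. Given $\epsilon_h\in W_h$, Lemma~\ref{lvq2} supplies $\bv_{\epsilon_h}\in\bU_h^{0}$ with $b(\bv_{\epsilon_h},\epsilon_h)=h^2\sum_T(\mu\nabla\epsilon_h,\nabla\epsilon_h)_T+h\sum_{e\in\E_h^0}\|\jump{\epsilon_h}\|_e^2+h\sum_{i=0}^m\|\epsilon_h-\bar{(\epsilon_h)}_i\|_{\Gamma_i}^2$, which is comparable to $\|\epsilon_h\|_{W_h}^2$ by uniform positive-definiteness of $\mu$, and with $\3bar\bv_{\epsilon_h}\3bar\leqC\|\epsilon_h\|_{W_h}$. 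From~(\ref{EQ:div-curl:error-eq-01}) with $\bv=\bv_{\epsilon_h}$,
\[
b(\bv_{\epsilon_h},\epsilon_h)=\varphi_{\bu,p}(\bv_{\epsilon_h})-a(\be_h,\bv_{\epsilon_h}).
\]
The term $a(\be_h,\bv_{\epsilon_h})$ is bounded by $\3bar\be_h\3bar\,\3bar\bv_{\epsilon_h}\3bar\leqC\3bar\be_h\3bar\,\|\epsilon_h\|_{W_h}$ by Cauchy--Schwarz for the symmetric positive form $a(\cdot,\cdot)$, and $\varphi_{\bu,p}(\bv_{\epsilon_h})$ is bounded by $h^k(\|\bu\|_{k+1}+\|p\|_k)\,|\bv_{\epsilon_h}|_{1,h}$ again via Lemma~\ref{Lemma:Div-Curl:phi-estimate}, with $|\bv_{\epsilon_h}|_{1,h}\leqC\3bar\bv_{\epsilon_h}\3bar\leqC\|\epsilon_h\|_{W_h}$ (here I need the elementary fact that the $|\cdot|_{1,h}$ seminorm is dominated by $\3bar\cdot\3bar$ on $\bV_h$, which follows from the two stabilizer terms in $\3bar\cdot\3bar^2$). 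Dividing through by $\|\epsilon_h\|_{W_h}$ and inserting the bound for $\3bar\be_h\3bar$ already obtained gives $\|\epsilon_h\|_{W_h}\leqC h^k(\|\bu\|_{k+1}+\|p\|_k)$. Adding the two estimates yields~(\ref{th1}).

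The main obstacle I anticipate is not conceptual but bookkeeping: one must verify carefully the norm-equivalence facts used implicitly above — namely that on the relevant subspace $\3bar\cdot\3bar$ and $\3bar\cdot\3bar_1$ are equivalent (the $\|\nabla_w\cdot(\mu\cdot)\|$-contribution to $\3bar\be_h\3bar_1$ must be shown to vanish or be controlled, using that $\be_h$ satisfies $b(\be_h,q)=0$ for all $q\in W_h$ together with the definition~(\ref{2.2}) of $\nabla_w\cdot(\mu\cdot)$ on each $T$, whose range is $P_{k-1}(T)$, so $\nabla_w\cdot(\mu\be_h)\equiv0$), and that $|\cdot|_{1,h}\leqC\3bar\cdot\3bar$ uniformly. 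One also needs the coercivity $\3bar\bv\3bar^2\gtrsim\sum_T\|\nabla_w\times\bv\|_T^2+s(\bv,\bv)$, which is immediate from uniform positive-definiteness of $\kappa$. With these routine equivalences in hand, the argument is the textbook Brezzi-type estimate driven by Lemma~\ref{lvq2} and Lemma~\ref{Lemma:Div-Curl:phi-estimate}.
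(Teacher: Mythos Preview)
Your proposal is correct and follows essentially the same approach as the paper's proof: test the first error equation with $\be_h$, use the second to kill $b(\be_h,\epsilon_h)$ and to see that $\nabla_w\cdot(\mu\be_h)=0$ (so that $\3bar\be_h\3bar_1=\3bar\be_h\3bar$), bound $\varphi_{\bu,p}(\be_h)$ via Lemma~\ref{Lemma:Div-Curl:phi-estimate}, and then recover $\|\epsilon_h\|_{W_h}$ from the \emph{inf-sup} Lemma~\ref{lvq2} applied to $b(\bv_{\epsilon_h},\epsilon_h)=\varphi_{\bu,p}(\bv_{\epsilon_h})-a(\be_h,\bv_{\epsilon_h})$. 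The norm-equivalence facts you flag as ``bookkeeping'' are exactly the ones the paper uses (in particular $|\cdot|_{1,h}^2\le a(\cdot,\cdot)=\3bar\cdot\3bar^2$ and the vanishing of the weak-divergence contribution to $\3bar\be_h\3bar_1$), so there is no gap.
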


\begin{proof} From Theorem
\ref{Thm:div-curl:theorem-error-eqns}, the error functions
$\be_h=Q_h\bu - \bu_h$ and $\epsilon_h={\cal Q}_h p - p_h$ satisfy
the equations
(\ref{EQ:div-curl:error-eq-01})-(\ref{EQ:div-curl:error-eq-02}). By
letting $\bv=\be_h$ in (\ref{EQ:div-curl:error-eq-01}) and then
using (\ref{EQ:div-curl:error-eq-02}), we obtain
\begin{equation}\label{November-29:001}
a(\be_h, \be_h) = \varphi_{\bu, p}(\be_h).
\end{equation}
The right-hand side can be estimated by using Lemma
\ref{Lemma:Div-Curl:phi-estimate} as follows
$$
|\varphi_{\bu,p}(\textbf{\be}_h)|\leqC
h^k(\|\bu\|_{k+1}+\|p\|_k)|\textbf{\be}_h|_{1,h}.
$$
Substituting the above into (\ref{November-29:001}) yields
\begin{equation*}
a(\be_h, \be_h) \leqC
h^k(\|\bu\|_{k+1}+\|p\|_k)|\textbf{\be}_h|_{1,h},
\end{equation*}
which, together with $|\textbf{\be}_h|_{1,h}^2 \leq a(\be_h,
\be_h)$, leads to
\begin{equation}\label{November-29:002}
a(\be_h, \be_h)^{1/2} \leqC h^k(\|\bu\|_{k+1}+\|p\|_k).
\end{equation}
Next, it follows from the equation (\ref{EQ:div-curl:error-eq-02})
that $\nabla_w\cdot(\mu\be_h)=0$. Thus,
$$
\3bar\be_h\3bar_1 = \3bar \be_h \3bar \leqC a(\be_h, \be_h)^{1/2}
$$
Combining the above inequality with (\ref{November-29:002}) gives
\begin{equation}\label{November-29:003}
\3bar\be_h\3bar_1 \leqC h^k(\|\bu\|_{k+1}+\|p\|_k).
\end{equation}

The error function $\epsilon_h$ can be estimated by using the {\em
inf-sup} condition derived in Lemma \ref{lvq2}. To this end, from
the equation (\ref{EQ:div-curl:error-eq-01}), we have
\begin{equation}\label{November:30:finenow}
b(\bv,\epsilon_h) = \varphi_{\bu,p}(\bv) - a(\be_h,\bv).
\end{equation}
By letting $\bv = \bv_{\epsilon_h}$ in (\ref{November:30:finenow}),
we arrive at
$$
\|\epsilon_h\|_{W_h}^2 \leqC |\varphi_{\bu,p}(\bv_{\epsilon_h})| +
|a(\be_h,\bv_{\epsilon_h})|.
$$
Using Lemma \ref{Lemma:Div-Curl:phi-estimate} and the error estimate
(\ref{November-29:002}) we obtain
$$
\|\epsilon_h\|_{W_h}^2 \leqC h^k (\|\bu\|_{k+1}+\|p\|_k) \3bar
\bv_{\epsilon_h}\3bar,
$$
which, together with (\ref{6.5}), leads to
$$
\|\epsilon_h\|_{W_h} \leqC h^k (\|\bu\|_{k+1}+\|p\|_k).
$$
This completes the proof of the theorem.
\end{proof}

The mesh-dependent norm $\|q\|_{W_h}$ in the finite element space
$W_h$ is a scaled discrete $H^1$ norm for piecewise smooth
functions. For the lowest order WG element (i.e., piecewise linear
for $\bu$ and piecewise constant for $p$), the error estimate in
Theorem \ref{Thm:div-curl:theorem-error-estimate} does not give any
convergence for the approximation of $p$. However, it is possible to
replace the $\|\cdot\|_{W_h}$ norm by the standard $L^2$ norm in the
error estimate (\ref{th1}) if the solution of the following second
order elliptic problem is $H^2$-regular:
\begin{equation}\label{EQ:2015-DualOne}
\begin{split}
-\nabla\cdot(\mu \nabla \Phi) & = \epsilon_h,\qquad \mbox{in} \ \Omega,\\
\Phi|_{\Gamma_i} & = \alpha_i,\qquad i=0,1,\ldots,m,
\end{split}
\end{equation}
where $\alpha_0=0$ and $\alpha_i$ is a set of constants such that
$\langle \nabla\Phi\cdot\bn_i, 1\rangle_{\Gamma_i}=0$ for
$i=1,\ldots, m$. In fact, for the solution of
(\ref{EQ:2015-DualOne}), it can be seen that $\nabla\Phi\in
\mathds{Y}_\mu(\Omega)\cap H({\rm div}_\mu;\Omega)$. Moreover, the
projection $\bz=Q_h(\nabla\Phi)$ (see (\ref{EQ:12-31:001}) for its
definition) is a finite element function in $\bU_h^0$. The desired
error estimate for $\epsilon_h$ in $L^2(\Omega)$ can then be
obtained by taking $\bv=\bz$ in the error equation
(\ref{EQ:div-curl:error-eq-01}). Details are left to interested
readers as an exercise.

\subsection{$L^2$-error estimates}
To derive an $L^2$ error estimate for $\be_h$, we consider the dual
problem of seeking $\bpsi\in H_0({\rm curl};\Omega) \cap H({\rm
div}_\mu;\Omega)$ and $\tau\in L^2(\Omega)$ such that
\begin{equation}\label{EQ:model-problem-I-Dual}
\begin{array}{rll}
(\kappa \curl\bpsi,\curl\bv) + (\nabla\cdot(\mu\bv), \tau)=&
(\be_0,\bv), &\ \forall\ \bv\in
\mathds{Y}_\mu(\Omega)\cap H({\rm div}_\mu;\Omega),\\
(\nabla\cdot(\mu\bpsi), w) =& 0, &\ \forall\ w\in L^2(\Omega),
\\
\langle \mu\bpsi\cdot\bn_i, 1\rangle_{\Gamma_i} =&0, &\ i=1,\ldots,
m.
\end{array}
\end{equation}
Assume that the dual problem (\ref{EQ:model-problem-I-Dual}) has the
$[H^{2} (\Omega)]^3 \times  H^1(\Omega)$-regularity property in the
sense that the solution $(\bpsi;\tau)\in [H^{2}(\Omega)]^3 \times
H^1(\Omega)$ and satisfies the following a priori estimate:
\begin{equation}\label{7.2}
\|\bpsi\|_{2}+\|\tau\|_1\leqC \|\textbf{e}_0\|_0.
\end{equation}

By using a Lagrange multiplier $\gamma=(\gamma_1,\ldots,
\gamma_m)\in \mathbb{R}^m$, the dual problem
(\ref{EQ:model-problem-I-Dual}) can be rewritten in an equivalent
form as follows. Find $\bpsi\in H_0({\rm curl};\Omega) \cap H({\rm
div}_\mu;\Omega)$, $\tau\in L^2(\Omega)$, and $\gamma\in \bbR^m$
such that
\begin{equation}\label{EQ:model-problem-I-Dual-new}
\left\{
\begin{array}{c}\displaystyle
(\kappa \curl\bpsi,\curl\bv) + (\nabla\cdot(\mu\bv), \tau)
+\sum_{i=1}^m
\langle \mu\bv\cdot\bn_i, \gamma_i\rangle_{\Gamma_i}= (\be_0,\bv),\\
\displaystyle(\nabla\cdot(\mu\bpsi), w) + \sum_{i=1}^m \langle
\mu\bpsi\cdot\bn_i, s_i\rangle_{\Gamma_i} = 0,
\end{array}
\right.
\end{equation}
for all $\bv\in H_0({\rm curl};\Omega)\cap H({\rm div}_\mu;\Omega)
,\ w\in L^2(\Omega),$ and $s\in \bbR^m$.

\begin{theorem}\label{Thm:Div-Curl:L2-error-estimate}
Let $k\geq 1$ be the order of the WG scheme
(\ref{3.3-original})-(\ref{3.5-original}). Let $(\textbf{u}; p) \in
[H^{k+1}(\Omega)]^3 \times H^{k}(\Omega)$ and $(\textbf{u}_h; p_h)
\in \bV_h \times W_h$ be the solutions of the problem
(\ref{EQ:model-problem-I}) and
(\ref{3.3-original})-(\ref{3.5-original}), respectively. Then, the
following estimate holds true
\begin{equation}\label{7.3}
\|Q_0\textbf{u}-\textbf{u}_0\|\leqC
h^{k+1}\big(\|\textbf{u}\|_{k+1}+\|p\|_k\big).
\end{equation}
\end{theorem}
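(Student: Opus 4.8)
The plan is to use a standard duality (Aubin–Nitsche) argument based on the dual problem (\ref{EQ:model-problem-I-Dual}). First I would take $\bv=\be_h$ as the test function in the dual problem's error-type identity: since $(\bpsi;\tau)$ solves (\ref{EQ:model-problem-I-Dual}) with right-hand side $\be_0$, and since $\be_h\in\bU_h^0$ so that $\langle\mu\be_0\cdot\bn_i,1\rangle_{\Gamma_i}$-type constraints are harmless, I would apply Lemma~\ref{lemma5.1} to $(\bw,\rho)=(\bpsi,\tau)$ with $\eta=\be_0$. This yields
\begin{equation*}
(\kappa\nabla_w\times(Q_h\bpsi),\nabla_w\times\be_h)_h + (\nabla_w\cdot(\mu\be_h),{\cal Q}_h\tau)_h = (\be_0,\be_0) + l_\bpsi(\be_h) + \theta_\tau(\be_h).
\end{equation*}
Adding the stabilizer $s(Q_h\bpsi,\be_h)$ to both sides gives $a(Q_h\bpsi,\be_h) + b(\be_h,{\cal Q}_h\tau) = \|\be_0\|_0^2 + l_\bpsi(\be_h)+\theta_\tau(\be_h)+s(Q_h\bpsi,\be_h)$.

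Next I would exploit the symmetry of $a(\cdot,\cdot)$ and the error equations (\ref{EQ:div-curl:error-eq-01})--(\ref{EQ:div-curl:error-eq-02}). Since $b(\be_h,{\cal Q}_h\tau)=0$ by (\ref{EQ:div-curl:error-eq-02}) (as ${\cal Q}_h\tau\in W_h$), and since taking $\bv=Q_h\bpsi$... — here one must be careful, because $Q_h\bpsi$ need not lie in $\bU_h^0$ unless $\bpsi$ satisfies the right boundary conditions; but $\bpsi\in H_0({\rm curl};\Omega)$ and $\langle\mu\bpsi\cdot\bn_i,1\rangle_{\Gamma_i}=0$ from (\ref{EQ:model-problem-I-Dual}), so $Q_h\bpsi\in\bU_h^0$ indeed. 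Therefore I can use $\bv=Q_h\bpsi$ in (\ref{EQ:div-curl:error-eq-01}) to get $a(\be_h,Q_h\bpsi)+b(Q_h\bpsi,\epsilon_h)=\varphi_{\bu,p}(Q_h\bpsi)$. Combining these, and using $b(Q_h\bpsi,\epsilon_h)=(\nabla_w\cdot(\mu Q_h\bpsi),\epsilon_h)_h = ({\cal Q}_h\nabla\cdot(\mu\bpsi),\epsilon_h)_h = 0$ by the commutativity (\ref{4.4}) and the second equation of (\ref{EQ:model-problem-I-Dual}), I obtain
\begin{equation*}
\|\be_0\|_0^2 = \varphi_{\bu,p}(Q_h\bpsi) - l_\bpsi(\be_h) - \theta_\tau(\be_h) - s(Q_h\bpsi,\be_h).
\end{equation*}

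Then I would estimate each of the four terms on the right. For $l_\bpsi(\be_h)$, $\theta_\tau(\be_h)$, and $s(Q_h\bpsi,\be_h)$, I would apply Lemma~\ref{Lemma:Div-Curl:phi-estimate} with $r=1$ to the smooth dual data, giving bounds of the form $h\,(\|\bpsi\|_2+\|\tau\|_1)\,|\be_h|_{1,h}$; then $|\be_h|_{1,h}\le a(\be_h,\be_h)^{1/2}\lesssim h^k(\|\bu\|_{k+1}+\|p\|_k)$ from (\ref{November-29:002}), and the regularity estimate (\ref{7.2}) converts $\|\bpsi\|_2+\|\tau\|_1$ into $\|\be_0\|_0$, producing a term $\lesssim h^{k+1}(\|\bu\|_{k+1}+\|p\|_k)\|\be_0\|_0$. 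The genuinely new work is the term $\varphi_{\bu,p}(Q_h\bpsi) = l_\bu(Q_h\bpsi)+\theta_p(Q_h\bpsi)+s(Q_h\bu,Q_h\bpsi)$: here I cannot simply bound $|Q_h\bpsi|_{1,h}$ crudely, because that would only give order $h^k$, not $h^{k+1}$. Instead I would rewrite each functional using the fact that $\bpsi$ is smooth — e.g. in $l_\bu(Q_h\bpsi)=\sum_T\langle(\bQ_h-I)(\kappa\curl\bu),(Q_0\bpsi-\mathds{Q}_b\bpsi)\times\bn\rangle_{\partial T}$, insert $\curl\bpsi$ or exploit that $(Q_0\bpsi-\mathds{Q}_b\bpsi)\times\bn$ equals $(Q_0\bpsi-\bpsi)\times\bn$ up to the projection on the boundary, and then use the approximation estimates (\ref{A1})--(\ref{A3}) for \emph{both} factors simultaneously, obtaining an extra power of $h$. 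Concretely, $|l_\bu(Q_h\bpsi)|\lesssim \big(\sum_T h_T\|(\bQ_h-I)(\kappa\curl\bu)\|_{\partial T}^2\big)^{1/2}\big(\sum_T h_T^{-1}\|(Q_0\bpsi-\bpsi)\times\bn\|_{\partial T}^2\big)^{1/2}\lesssim h^k\|\bu\|_{k+1}\cdot h\|\bpsi\|_2$, and similarly for $\theta_p$ and the stabilizer term. I expect this double-approximation bookkeeping for $\varphi_{\bu,p}(Q_h\bpsi)$ to be the main obstacle: one must verify that every boundary term carries approximation error from both the primal and dual sides, and that the boundary pieces of $\mathds{Q}_b\bpsi$ (which mixes a normal $\mu$-weighted part and a tangential part) can be handled by (\ref{interteller}) and the commuting/projection properties so that no term is left at only order $h^k$. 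Assembling all bounds gives $\|\be_0\|_0^2\lesssim h^{k+1}(\|\bu\|_{k+1}+\|p\|_k)\|\be_0\|_0$, and dividing by $\|\be_0\|_0$ yields (\ref{7.3}).
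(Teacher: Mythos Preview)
Your proposal is correct and follows essentially the same duality argument as the paper. The only cosmetic difference is in handling $\varphi_{\bu,p}(Q_h\bpsi)$: the paper applies Lemma~\ref{Lemma:Div-Curl:phi-estimate} with $r=k$ to get $|\varphi_{\bu,p}(Q_h\bpsi)|\lesssim h^k(\|\bu\|_{k+1}+\|p\|_k)\,|Q_h\bpsi|_{1,h}$ and then shows $|Q_h\bpsi|_{1,h}\lesssim h\|\bpsi\|_2$ directly (so the ``crude'' route via $|Q_h\bpsi|_{1,h}$ already yields $h^{k+1}$), whereas you unpack the three constituent terms individually---but the underlying computation is the same.
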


\begin{proof}
From the first equation of (\ref{EQ:model-problem-I-Dual-new}), we
see that the the equation (\ref{5.2}) is satisfied by
$(\bw;\rho)=(\bpsi;\tau)$ with $\eta=\be_0$. In addition, the
boundary condition (\ref{November.28.100}) is verified by
$$
\tau|_{\Gamma_0}=0,\ \tau|_{\Gamma_i}=-\gamma_i,\ i=1,\ldots, m.
$$
Thus, by using (\ref{Div-Curl:Feb9:300-new}) in Lemma \ref{lemma5.1}
with $\bv=\be_h\in \bU_h^{0}$, we obtain
\begin{equation}\label{7.4}
\begin{split}
\|Q_0\textbf{u}-\textbf{u}_0\|^2 = & (\kappa\nabla_w \times
(Q_h\bpsi),\nabla_w \times\be_h)_h \\
& +(\nabla_w\cdot(\mu\be_h),{\cal
Q}_h\tau)_h-\theta_\tau(\be_h)-l_\bpsi(\be_h).
\end{split}
\end{equation}
Note that the error equation (\ref{EQ:div-curl:error-eq-02}) implies
$\nabla_w\cdot(\mu\textbf{e}_h)=0$ so that (\ref{7.4}) can be
simplified as
\begin{equation}\label{7.7old}
\|Q_0\textbf{u}-\textbf{u}_0\|^2=a( Q_h\bpsi, \textbf{e}_h)
-\varphi_{\bpsi,\tau}(\textbf{e}_h),
\end{equation}
where
$$
\varphi_{\bpsi,\tau}(\textbf{e}_h)=\theta_\tau(\textbf{e}_h)+
l_\bpsi(\textbf{e}_h)+s( Q_h\bpsi, \textbf{e}_h ).
$$
From (\ref{4.4}) and the second equation of
(\ref{EQ:model-problem-I-Dual}), we have
\begin{equation*}
\begin{split}
b(Q_h\bpsi,\epsilon_h)&=(\nabla_w \cdot (\mu Q_h\bpsi),\epsilon_h)_h\\
&= ({\cal Q}_h\nabla\cdot(\mu\bpsi), \epsilon_h)_h =0,
\end{split}
\end{equation*}
which, together with (\ref{EQ:div-curl:error-eq-01}) and
(\ref{7.7old}), leads to
\begin{equation}\label{7.7}
\begin{split}
\|Q_0\textbf{u}-\textbf{u}_0\|^2 &=a(\be_h,
Q_h\bpsi)+b(Q_h\bpsi,\epsilon_h)
-\varphi_{\bpsi,\tau}(\textbf{e}_h)\\
&=\varphi_{\textbf{u},p}(Q_h\bpsi)-\varphi_{\bpsi,\tau}(\textbf{e}_h).
\end{split}
\end{equation}

The two terms on the righ-hand side of (\ref{7.7}) can be estimated
as follows. First, by letting $r=1$, $\bv=\be_h$, and
$(\bw;\rho)=(\bpsi;\tau)$ in Lemma
\ref{Lemma:Div-Curl:phi-estimate}, we obtain
\begin{equation}\label{7.8}
\begin{split}
|\varphi_{\bpsi,\tau}(\textbf{e}_h)| \leqC
h(\|\bpsi\|_{2}+\|\tau\|_1)\ |\textbf{e}_h|_{1,h} \leqC h
\|\textbf{e}_0\|\ \3bar\be_h\3bar,
\end{split}
\end{equation}
where we have used (\ref{7.2}) in the second inequality. Next, by
letting $r=k$, $\bv=Q_h\bpsi$, and $(\bw;\rho)=(\bu;p)$ in Lemma
\ref{Lemma:Div-Curl:phi-estimate}, we arrive at
\begin{equation}\label{7.8:100}
|\varphi_{\bu,p}(Q_h\bpsi)|\leqC h^k(\|\bu\|_{k+1}+\|p\|_k)\
|Q_h\bpsi|_{1,h}.
\end{equation}

To estimate $|Q_h\bpsi|_{1,h}$, we recall from (\ref{Norm-v1h}) and
the definition (\ref{EQ:12-31:001}) of $Q_h$ that
\begin{equation}\label{EQ:2015:Jan-2:001}
|Q_h\bpsi|_{1,h}^2 = \sum_{T\in\T_h} h_T^{-1}\|(Q_0\bpsi -
\mathds{Q}_b\bpsi)\times\bn\|_{\partial T}^2 + h_T^{-1}\|(\mu
Q_0\bpsi - \mathds{Q}_b\bpsi)\cdot\bn\|_{\partial T}^2.
\end{equation}
By using (\ref{EQ:12-31:002}), we have
\begin{eqnarray*}
(\mathds{Q}_b\bpsi)\times\bn &=&
Q_b(\bn\times(\bpsi\times\bn))\times\bn\\
&=&Q_b\bpsi\times\bn,\\
(\mathds{Q}_b\bpsi)\cdot\bn &=& Q_b(\mu\bpsi\cdot\bn).
\end{eqnarray*}
Thus,
\begin{eqnarray*}
(Q_0\bpsi - \mathds{Q}_b\bpsi)\times\bn &=& (Q_0\bpsi -
{Q}_b\bpsi)\times\bn, \\
(\mu Q_0\bpsi - \mathds{Q}_b\bpsi)\cdot\bn &=& \mu(Q_0
\bpsi-Q_b\bpsi)\cdot\bn.
\end{eqnarray*}
Substituting the above identities into (\ref{EQ:2015:Jan-2:001})
yields
\begin{equation*}\label{EQ:2015:Jan-2:002}
\begin{split}
|Q_h\bpsi|_{1,h}^2 &= \sum_{T\in\T_h} h_T^{-1}\|(Q_0\bpsi -
{Q}_b\bpsi)\times\bn\|_{\partial T}^2 + h_T^{-1}\|\mu(Q_0\bpsi -
{Q}_b\bpsi)\cdot\bn\|_{\partial T}^2\\
&\leqC  \sum_{T\in\T_h} h_T^{-1}\|Q_0\bpsi - {Q}_b\bpsi\|_{\partial
T}^2\\
&\leq \sum_{T\in\T_h} h_T^{-1}\|Q_0\bpsi -
\bpsi\|_{\partial T}^2\\
&\leqC\sum_{T\in\T_h}(h_T^{-2}\|\bpsi-Q_0\bpsi\|_T^2 +
 \|\nabla(\bpsi-Q_0\bpsi)\|_T^2) \\
&\leqC\sum_{T\in\T_h}h_T^2\|\nabla^2\bpsi\|_T^2 \\
&\leqC h^2 \|\bpsi\|_2^2,
\end{split}
\end{equation*}
where we have used the $L^2$ property of $Q_b$ in line 3, the trace
inequality (\ref{A4}) in line 4, and the usual approximation
property for the $L^2$ projection operator $Q_0$ in line 5.
Inserting the above estimate into (\ref{7.8:100}) and then using
(\ref{7.2}), we obtain
\begin{equation}\label{7.8:200}
|\varphi_{\bu,p}(Q_h\bpsi)|\leqC h^{k+1}(\|\bu\|_{k+1}+\|p\|_k)\
\|\be_0\|.
\end{equation}
Now, combining (\ref{7.7}) with (\ref{7.8}) and (\ref{7.8:200}), we
arrive at
\begin{equation*}
\begin{split}
\|Q_0\textbf{u}-\textbf{u}_0\|^2  \leqC \Big(h \3bar\be_h\3bar +
h^{k+1}(\|\bu\|_{k+1}+\|p\|_k)\Big)\ \| \textbf{e}_0\|.
\end{split}
\end{equation*}
It follows that
\begin{equation*}
\|Q_0\textbf{u}-\textbf{u}_0\| \leqC h \3bar\be_h\3bar +
h^{k+1}(\|\bu\|_{k+1}+\|p\|_k),
\end{equation*}
which, together with Theorem
\ref{Thm:div-curl:theorem-error-estimate} and the fact that
$\3bar\be_h\3bar\leq \3bar\be_h\3bar_1$, completes the proof of
the theorem.
\end{proof}

The WG finite element solution $\bu_h=\{\bu_0; \bu_b\}$ consists of
two components on each element $T\in\T_h$: (1) the element interior
value $\bu_0$, and (2) the element boundary value $\bu_b$. The rest
of this section is devoted to an error analysis for $\bu_b$. To this
end, we introduce the following topology in the finite element space
$\bV_h$
$$
\|\bv_b\|_{\E_h}=\Big(\sum_{T\in {T}_h} h_T \int_{\partial T}
|\bv_b|^2 ds\Big)^{\frac{1}{2}},\qquad \bv=\{\bv_0; \bv_b\}\in
\bV_h.
$$
It is clear that the above defines an $L^2$-like norm for the face
variable $\bv_b$.

\begin{theorem}\label{Thm:Div-Curl:L2-error-estimate-ub}
Let $k\geq 1$ be the order of the WG scheme
(\ref{3.3-original})-(\ref{3.5-original}). Let $(\textbf{u}; p) \in
[H^{k+1} (\Omega)]^3 \times H^{k}(\Omega)$ and $(\textbf{u}_h; p_h)
\in \bV_h \times W_h$ be the solution of the problem
(\ref{EQ:model-problem-I}) and
(\ref{3.3-original})-(\ref{3.5-original}), respectively. Then, we
have
\begin{equation}\label{EQ:div-curl:feb11:800}
\|\mathds{Q}_b\textbf{u}-\textbf{u}_b\|_{\E_h}\leqC
h^{k+1}\big(\|\textbf{u}\|_{k+1}+\|p\|_k\big).
\end{equation}
Here $\mathds{Q}_b\textbf{u}$ is the projection of $\bu$ on each
face $e\in\E_h$ given by (\ref{EQ:12-31:002}).
\end{theorem}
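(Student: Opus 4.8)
The plan is to bound the face error $\textbf{e}_b := \mathds{Q}_b\textbf{u}-\textbf{u}_b$, which is the boundary component of $\textbf{e}_h=Q_h\textbf{u}-\textbf{u}_h\in\bU_h^{0}$, and to exploit the fact that the weight $h_T$ appearing in $\|\cdot\|_{\E_h}$ exactly compensates the $O(h_T^{-1/2})$ loss of the inverse trace inequality. First I would use, on each face of $\pT$, the pointwise orthogonal splitting of a vector into its normal and tangential components, $|\textbf{e}_b|^2=|\textbf{e}_b\cdot\bn|^2+|\textbf{e}_b\times\bn|^2$, so that
\[
\|\mathds{Q}_b\textbf{u}-\textbf{u}_b\|_{\E_h}^2=\sum_{T\in\T_h}h_T\Big(\|\textbf{e}_b\cdot\bn\|_{\pT}^2+\|\textbf{e}_b\times\bn\|_{\pT}^2\Big).
\]

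Next, writing $\textbf{e}_0=Q_0\textbf{u}-\textbf{u}_0$, I would split each boundary quantity into a stabilizer-controlled jump and an interior term: $\textbf{e}_b\times\bn=(\textbf{e}_b-\textbf{e}_0)\times\bn+\textbf{e}_0\times\bn$ and $\textbf{e}_b\cdot\bn=(\textbf{e}_b-\mu\textbf{e}_0)\cdot\bn+\mu\textbf{e}_0\cdot\bn$. Since $h_T\le h$, the jump contributions are bounded by $h^2|\textbf{e}_h|_{1,h}^2$ directly from the definition (\ref{Norm-v1h}) of $|\cdot|_{1,h}$. For the interior contributions, the inverse trace inequality (\ref{Aa-trace}) — applicable because $\textbf{e}_0$ is piecewise polynomial and $\mu$ is piecewise constant — gives $\|\textbf{e}_0\|_{\pT}^2\leqC h_T^{-1}\|\textbf{e}_0\|_T^2$, whence $\sum_{T\in\T_h}h_T\big(\|\textbf{e}_0\times\bn\|_{\pT}^2+\|\mu\textbf{e}_0\cdot\bn\|_{\pT}^2\big)\leqC\|\textbf{e}_0\|_0^2$. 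Altogether this yields $\|\mathds{Q}_b\textbf{u}-\textbf{u}_b\|_{\E_h}^2\leqC h^2\,|\textbf{e}_h|_{1,h}^2+\|\textbf{e}_0\|_0^2$.

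Finally I would insert the estimates already established. From the error equation (\ref{EQ:div-curl:error-eq-02}) one has $\nabla_w\cdot(\mu\textbf{e}_h)=0$, so $|\textbf{e}_h|_{1,h}^2=s(\textbf{e}_h,\textbf{e}_h)\le a(\textbf{e}_h,\textbf{e}_h)=\3bar\textbf{e}_h\3bar^2\leqC h^{2k}(\|\textbf{u}\|_{k+1}+\|p\|_k)^2$ by Theorem \ref{Thm:div-curl:theorem-error-estimate}; and $\|\textbf{e}_0\|_0\leqC h^{k+1}(\|\textbf{u}\|_{k+1}+\|p\|_k)$ by Theorem \ref{Thm:Div-Curl:L2-error-estimate}. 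Both terms are therefore $O\big(h^{2(k+1)}(\|\textbf{u}\|_{k+1}+\|p\|_k)^2\big)$, and taking square roots gives (\ref{EQ:div-curl:feb11:800}). I do not expect a deep difficulty here; the one point to get right is the bookkeeping of powers of $h$: the $h_T$-weight in $\|\cdot\|_{\E_h}$ turns the $O(h^{-1/2})$ trace loss into a net gain of one power of $h$, so that even the $O(h^{k})$ energy-norm error of $\textbf{e}_h$ contributes only at order $h^{k+1}$, i.e. $\textbf{u}_b$ superconverges to $\mathds{Q}_b\textbf{u}$.
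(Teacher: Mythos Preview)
Your argument is correct and mirrors the paper's proof essentially line for line: the orthogonal splitting $|\be_b|^2=|\be_b\cdot\bn|^2+|\be_b\times\bn|^2$, the triangle-inequality decomposition into a stabilizer jump plus an interior trace term, the inverse trace inequality (\ref{Aa-trace}) for $\be_0$, and the invocation of Theorems \ref{Thm:div-curl:theorem-error-estimate} and \ref{Thm:Div-Curl:L2-error-estimate} are exactly the ingredients the paper uses. The only cosmetic difference is that the paper first sums with weight $h_T^{-1}$ (bounding by $\3bar\be_h\3bar_1^2+h^{-2}\|\be_0\|^2$) and then multiplies by $h^2$, whereas you sum with weight $h_T$ directly; the bookkeeping of powers of $h$ is identical.
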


\begin{proof} Note that $\mathds{Q}_b\bu-\bu_b=\be_b$ is the error
of the WG finite element solution on $\E_h$ -- the set of all
element faces in $\T_h$. It follows from the triangle inequality
that on each $\pT$
\begin{equation*}
\begin{split}
|\be_b|^2 =&\ |\be_b\cdot\bn|^2 + |\be_b\times\bn|^2\\
\leq & \ 2 |(\mu\be_0-\be_b)\cdot\bn|^2 + 2 |(\be_0-\be_b)\times\bn|^2\\
 & \ + 2 |\mu\be_0\cdot\bn|^2 + 2|\be_0\times\bn|^2.
\end{split}
\end{equation*}
Summing over all the elements and then using (\ref{Eq:bar1-norm})
yields
\begin{eqnarray*}
\sum_{T\in\T_h} h_T^{-1} \int_\pT |\be_b|^2ds &&\leqC \3bar \be_h
\3bar_1^2 + \sum_{T\in\T_h} h_T^{-1} \int_\pT (|\mu\be_0\cdot\bn|^2
+|\be_0\times\bn|^2)ds\\
&& \leqC \3bar \be_h \3bar_1^2 + h^{-2}\|\be_0\|^2,
\end{eqnarray*}
where we have used the trace inequality (\ref{Aa-trace}) in the
second line. The last inequality further leads to the following
estimate:
\begin{eqnarray*}
\sum_{T\in\T_h} h_T \int_\pT |\be_b|^2ds \leqC h^2 \3bar \be_h
\3bar_1^2 + \|\be_0\|^2,
\end{eqnarray*}
which, together with the error estimates (\ref{th1}) and
(\ref{7.3}), implies the desired estimate
(\ref{EQ:div-curl:feb11:800}).
\end{proof}


\begin{thebibliography}{99}

\bibitem{giles} {\sc G. Auchmuty and J.C. Alexander}, {\em
$L^2$ Well-posedness of planar div-curl systems}, Archive for
Rational Mechanics and Analysis, 20 (2001), 160(2), pp.~91-134.

\bibitem{babuska} {\sc I. Babuska}, {\em
The finite element method with Lagrange multipliers}, Numer. Math.,
20 (1973), pp.~179-192.

\bibitem{b1998} {\sc  A. Bossavit}, {\em Computational electromagnetism}, Academic Press Inc., San
Diego, CA, 1998.

\bibitem{bramble} {\sc J.H. Bramble and J.E. Pasciak},
{\em A new approximation technique for div-curl systems}, Math.
Comp., 73 (2003), pp.~1739-1762.

\bibitem{bls2008}  {\sc S. Brenner, F. Li and L. Sung},
 {\em A locally divergence-free interior penalty method for two dimensional curl-curl problems},
SIAM J. Numer. Anal. 42 (2008), pp.~1190-1211.

\bibitem{brezzi} {\sc F. Brezzi}, {\em
On the existence, uniqueness, and approximation of saddle point
problems arising from Lagrange multipliers}, RAIRO, 8 (1974),
pp.~129-151.

\bibitem{brezzi2} {\sc R. Brezzi and A. Buffa},
{\em Innovative mimetic discretizations for electromagnetic
problems}, J. Comput. Appl. Math., 234 (2010), pp.~1980-1987.

\bibitem{c1992}  {\sc C. L. Chang},
{\em Finite element approximation for grad-div type of systems in
the plane}, SIAM J. Numer. Anal.,  29 (1992), pp.~590-601.

\bibitem{ciarlet}
{\sc P.G. Ciarlet}, \textit{The Finite Element Method for Elliptic
Problems}, North-Holland, 1978.

\bibitem{copeland}
{\sc D.M. Copeland, J. Gopalakrishnan, J.E. Pasciak}, {\em A mixed
method for axisymmetric div-curl systems}, Math. Comp. 77 (2008),
pp.~1941-1965.

\bibitem{cdn1999}  {\sc  M. Costabel, M. Dauge, and S. Nicaise},
 {\em Singularities of Maxwell interface problems}, M2AN Math. Model. Numer. Anal.,
33(3) (1999), pp.~627-649.

\bibitem{omnes} {\sc S. Delcourte, K. Domelevo, and P. Omnes}, {\em
A discrete duality finite volume approach to Hodge decomposition and
div-curl problems on almost arbitrary two-dimensional meshes}, SIAM
J. Numerical Analysis, 45 (2007), pp.~1142-1174.

\bibitem{girault-raviart}
{\sc V. Girault and P-A Raviart}, \textit{Finite Element Methods for
Navier-Stokes Equations: Theory and Algorithms}, Springer-Verlag
Berlin Heidelberg, 1986.

\bibitem{hiptmair} {\sc R. Hiptmair},
{\em Finite elements in computational electromagnetism}, Acta
Numerica, 2002, pp.~237-339.

\bibitem{hps2003}  {\sc P. Houston, I. Perugia  and D. Schotzau},
 {\em hp-DGFEM for Maxwell's equations}, in Numerical Mathematics and Advanced Applications:
ENUMATH 2001, F. Brezzi, A. Bua, S. Corsaro and A. Murli, eds.,
Springer-Verlag, Berlin, 2003, pp.~785-794.

\bibitem{hps2004}  {\sc  P. Houston, I. Perugia and D. Schotzau},
 {\em  Mixed discontinuous Galerkin approximation of the Maxwell operator}, SIAM J. Numer.
Anal., 42 (2004), pp.~434-459.

\bibitem{lipnikov} {\sc K. Lipnikov, G. Manzini, F. Brezzi, and A.
Buffa}, {\em The mimetic finite difference method for the 3D
magnetostatic field problems on polyhedral meshes}, J. Comput.
Phys., 230 (2011), pp.~305-328.

\bibitem{m2003}  {\sc P. Monk}, {\em Finite element methods
 for Maxwell's equations}, Oxford University Press, New York, 2003.

\bibitem{mwy1204} {\sc L. Mu, J. Wang  and X. Ye},
 {\em  Weak Galerkin finite element methods on polytopal meshes},
 arXiv:1204.3655v2,
 International Journal of Numerical Analysis and Modeling, Vol 12, Number 1, 2015, pp.~31-53.

\bibitem{mwyz} {\sc  L. Mu, J. Wang, X. Ye and S. Zhang},
{\em Weak Galerkin finite element method for the Maxwell
equations}, arXiv:1312.2309, J. of Scientific Computing, to appear.

\bibitem{n1980}  {\sc J. N\'{e}d\'{e}lec}, {\em Mixed finite elements
in R3}, Numer. Math., 35 (1980), pp.~315-341.

\bibitem{n1992}  {\sc  R. A. Nicolaides},
{\em Direct discretization of planar div-curl problems}, SIAM J.
Numer. Anal., 29 (1992), pp.~32-56.

 \bibitem{nw1997}  {\sc  R. A.
Nicolaides and X. Wu},  {\em Covolume solutions of three-dimensional
div-curl equations}, SIAM J. Numer. Anal., 34 (1997), pp.~2195-2203.

\bibitem{ps2003}  {\sc  I. Perugia and D. Schotzau},
 {\em The hp-local discontinuous Galerkin
method for low-frequency time-harmonic Maxwell equations}, Math.
Comp., 72 (2003), pp.~1179-1214.

\bibitem{psm2002}  {\sc I. Perugia, D. Schotzau  and P. Monk},
 {\em Stabilized interior penalty
methods for the timeharmonic Maxwell equations}, Comput. Methods
Appl. Mech. Engrg., 191 (2002), pp.~4675-4697.

\bibitem{valli2} {\sc A.A. Rodriguez, E. Bertolazzi, R. Ghiloni, and A. Valli}, {\em
Construction of a finite element Basis of the first de Rham
cohomology group and numerical solution of 3D magnetostatic
problems}, SIAM J. Numer. Anal., 51(4), 2013, pp.~2380-2402.

\bibitem{valli3} {\sc A.A. Rodriguez and A. Valli}, {\em Eddy
Current Approximation of Maxwell Equations}, Springer-Verlag Italia,
Milan, 2010.

\bibitem{saranen} {\sc J. Saranen}, {\em On generalized harmonic fields
in domains with anisotropic nonhomogeneous media}, J. Math. Anal.
Appl., 88 (1982), pp.~104-115.

\bibitem{cwang} {\sc C. Wang and J. Wang}, {\em
An efficient numerical scheme for the biharmonic equation by weak
Galerkin finite element methods on polygonal or polyhedral meshes},
arXiv:1309.5560,  Computers and Mathematics with Applications, 68
(2014), pp.~2314-2330.

\bibitem{wy1202}  {\sc J. Wang and X. Ye},
 {\em A weak Galerkin mixed finite element method
 for second-order elliptic problems}, arXiv:1202.3655v1,
 Math. Comp., Vol 83, Number 289, 2014, pp.~2101-2126.

\bibitem{wy1302} {\sc J. Wang and X. Ye},
 {\em A weak Galerkin finite element method for
 the Stokes equations}, arXiv:1302.2707.

\bibitem{wy2013}  {\sc J. Wang and X. Ye},
 {\em A weak Galerkin finite element method for
second-order elliptic problems}, arXiv:1104.2897, J. Comp. and Appl.
Math, 241 (2013), pp.~103-115.


\end{thebibliography}
\end{document}